\documentclass[12pt]{amsart}

%
%

\usepackage[colorlinks=true, pdfstartview=FitV, linkcolor=blue, citecolor=blue, urlcolor=blue, breaklinks=true]{hyperref}
\usepackage{amsmath,amsfonts,amssymb,amsthm,amscd,comment,paralist,stmaryrd,etoolbox,mathtools}
\usepackage[usenames,dvipsnames]{color}
\usepackage{mdwlist}

%
%

\leftmargin=0in
\topmargin=0pt
\headheight=0pt
\oddsidemargin=0in
\evensidemargin=0in
\textheight=8.75in
\textwidth=6.5in
\parindent=0.5cm
\headsep=0.25in
\widowpenalty10000
\clubpenalty10000

%
%

\newcommand\Z{\mathbb{Z}}
\newcommand\Q{\mathbb{Q}}

\newcommand\N{\mathbb{N}}

\newcommand\kk{\Bbbk}

\newcommand\id{\mathrm{Id}}

\newcommand\fh{\mathfrak{h}}

\newcommand\cF{\mathcal{F}}

\newcommand\cP{\mathcal{P}}

\newcommand\blambda{{\boldsymbol\lambda}}
\newcommand\bmu{{\boldsymbol\mu}}
\newcommand\bnu{{\boldsymbol\nu}}

\newcommand\Sy{\mathrm{Sym}}

\newcommand\prt{\mathrm{part}}
\newcommand\clr{\mathrm{color}}

\newcommand\pR[1]{\prescript{R}{}{#1}}
\newcommand\pL[1]{\prescript{L}{}{#1}}

\newcommand\qbin[2]{{#1 \atopwithdelims[] #2}}

\newcommand\ts{}

%
%


\DeclareMathOperator{\End}{End}

\DeclareMathOperator{\Ind}{Ind}

%
%

\newtheorem{theo}{Theorem}[section]
\newtheorem{prop}[theo]{Proposition}
\newtheorem{lem}[theo]{Lemma}
\newtheorem{cor}[theo]{Corollary}

\theoremstyle{definition}
\newtheorem{defin}[theo]{Definition}
\newtheorem{rem}[theo]{Remark}

\numberwithin{equation}{section}
\allowdisplaybreaks

\setcounter{tocdepth}{1}

%
%

\newtoggle{comments}    
\newtoggle{details}     
\newtoggle{prelimnote}  
\newtoggle{detailsnote} 

\toggletrue{detailsnote}

\iftoggle{comments}{%
  \newcommand{\comments}[1]{
    \begin{center}
      \parbox{6.5 in}{
        \color{red}
          {\footnotesize \textbf{Comments:} #1}
        \color{black}}
    \end{center}}
}{%
  \newcommand{\comments}[1]{}
}

\iftoggle{details}{%
  \newcommand{\details}[1]{
      \ \\
      \color{OliveGreen}
        {\footnotesize \textbf{Details:} #1}
      \color{black}
      \\
  }
}{%
  \newcommand{\details}[1]{}
}

\iftoggle{prelimnote}{%
  \newcommand{\prelim}{\textsc{Preliminary version} \bigskip}
}{%
  \newcommand{\prelim}{}
}

%
\begin{document}
%

\title{Twisted Heisenberg doubles}

\author{Daniele Rosso}
\address{D.~Rosso: Department of Mathematics and Statistics, University of Ottawa, and Centre de Recherches Math\'ematiques, Montr\'eal}
\urladdr{\url{http://mysite.science.uottawa.ca/drosso/}}
\email{drosso@uottawa.ca}

\author{Alistair Savage}
\address{A.~Savage: Department of Mathematics and Statistics, University of Ottawa}
\urladdr{\url{http://mysite.science.uottawa.ca/asavag2/}}
\email{alistair.savage@uottawa.ca}
\thanks{The second author was supported by a Discovery Grant from the Natural Sciences and Engineering Research Council of Canada.  The first author was supported by the Centre de Recherches Math\'ematiques and the Discovery Grant of the second author.}

\begin{abstract}
  We introduce a twisted version of the Heisenberg double, constructed from a twisted Hopf algebra and a twisted pairing.  We state a Stone--von Neumann type theorem for a natural Fock space representation of this twisted Heisenberg double and deduce the effect on the algebra of shifting the product and coproduct of the original twisted Hopf algebra.  We conclude by showing that the quantum Weyl algebra, quantum Heisenberg algebras, and lattice Heisenberg algebras are all examples of the general construction.
\end{abstract}

\subjclass[2010]{16T05, 16G99}
\keywords{Heisenberg algebra, Hopf algebra, twisted Hopf algebra, Heisenberg double, Fock space.}

\prelim

\maketitle
\thispagestyle{empty}

\tableofcontents

%
\section{Introduction}
%

The \emph{Heisenberg double} is a generalization of the Heisenberg algebra.  One can form the Heisenberg double of any nonnegatively graded Hopf algebra satisfying some mild assumptions.  As a $\kk$-module, the Heisenberg double of a Hopf algebra $H^+$ over a commutative ring $\kk$ with dual $H^-$ is isomorphic to $H^+ \otimes_\kk H^-$, and the factors $H^+$ and $H^-$ are subalgebras.  The relations between elements of $H^+$ and elements of $H^-$ arise from the left regular action of $H^-$ on $H^+$.  The Heisenberg double of the Hopf algebra of symmetric functions is precisely the infinite-dimensional Heisenberg algebra.  Just like the Heisenberg algebra, the more general Heisenberg double also has a natural Fock space representation and a Stone--von Neumann type theorem (see~\cite[Th.~2.11]{SY13}).

In the current paper, we define a twisted version of the Heisenberg double.  In particular, we replace Hopf algebras by \emph{twisted Hopf algebras} and replace the Hopf pairing between $H^+$ and $H^-$ (identifying each as the dual of the other) by a \emph{twisted pairing}.  Provided this data satisfies a certain compatibility condition (see Definition~\ref{def:compatible-pair}), we define the associated \emph{twisted Heisenberg double}.  In the case that the twistings are all trivial, our definition reduces to the usual Heisenberg double.  It turns out that the twisted Heisenberg double also has a natural Fock space representation that satisfies a Stone--von Neumann type theorem (Theorem~\ref{theo:Fock-space-properties}).

The main motivation for our definition of the twisted Heisenberg double comes from the theory of categorification.  The infinite-dimensional Heisenberg algebra was conjecturally categorified in~\cite{Kho10} using the tower of symmetric groups, and then in~\cite{LS13} using the tower of Hecke algebras of type $A$.  Motivated by these and other constructions, a general approach to the categorification of the Heisenberg double was taken in~\cite{SY13}.  This general approach was in terms of towers of algebras and categories of modules over such towers.  On the other hand, many constructions in categorification involve categories of \emph{graded} modules.  Extending the work of~\cite{SY13} to the setting of categories of graded (super)algebras necessitates the introduction of a twisted Heisenberg double.  We refer the reader to~\cite{RS14} for the details of this extension.

The relationship to categorification raises natural questions about the twisted Heisenberg double.  For example, it is common in the categorification literature to consider grading shifts of certain functors such as induction and restriction.  Since induction and restriction correspond to multiplication and comultiplication in the twisted Heisenberg double, it is important to know how the corresponding shifts of these operations affect the twisted Heisenberg double.  We address this question in Section~\ref{sec:shift}.  Namely, we deduce the precise effect of such shifts and show that certain shifts do not change the isomorphism type of the twisted Heisenberg double (Theorem~\ref{theo:alpha-invariant}).

In the final three sections of the paper, we illustrate our construction with several examples, all motivated by the categorification literature.  First, in Section~\ref{sec:quantum-Weyl}, we show that the quantum Weyl algebra is a twisted Heisenberg double.  This is related to the fact that this algebra is categorified by categories of graded modules for the tower of nilcoxeter algebras (see~\cite[\S8]{RS14}).  Then, in Sections~\ref{sec:q-Heis} and~\ref{sec:lattice-Heis}, we consider the quantum Heisenberg algebras and lattice Heisenberg algebras related to various categorical constructions, such as those of~\cite{CL12,FJW00a,FJW00b,FJW02}.  It turns out that, even though these algebras are categorified by categories of graded modules,  they are, in fact, untwisted Heisenberg doubles.  By our general theory, one could then conclude that some of the grading shifts appearing in the categorical constructions do not affect the isomorphism class of the algebra being categorified.

Note that Hopf algebras can be viewed as Hopf algebra objects in the symmetric monoidal category of vector spaces.  Generalizing this viewpoint, one can view certain twisted Hopf algebras as Hopf algebra objects in the braided monoidal category of graded vector spaces.  In this approach, the twisting depends on either the inner or outer terms in a fourfold tensor product (see~\eqref{eq:twisted-mult}).  The twisted Hopf algebras in the current paper are more general, allowing a twisting depending on \emph{both} the inner and outer terms.  While in many specific examples it is possible to use a twisting depending on only one or the other, the more general approach seems more natural from various points of view.  For example, even if one considers a Hopf algebra whose twisting depends only on inner (or outer) terms, its twisted dual has a twisting that, a priori, depends on both inner \emph{and} outer terms (see Lemma~\ref{lem:dual-twisted-Hopf}).

\subsection*{Notation}

We let $\N$ and $\N_+$ denote the set of nonnegative and positive integers respectively.  We let $\kk$ be a commutative ring with unit.

\iftoggle{detailsnote}{
\medskip

\paragraph{\textbf{Note on the arXiv version}} For the interested reader, the tex file of the arXiv version of this paper includes hidden details of some straightforward computations and arguments that are omitted in the pdf file.  These details can be displayed by switching the \texttt{details} toggle to true in the tex file and recompiling.
}{}

\subsection*{Acknowledgements}

The authors would like to thank A.~Licata, J.~Sussan, and O.~Yacobi for helpful conversations.

%
\section{Dual twisted Hopf algebras} \label{sec:dual-hopf}
%

We fix a commutative ring $\kk$ with unit and all algebras, coalgebras, bialgebras and Hopf algebras will be over $\kk$.  We will denote the multiplication, comultiplication, unit, counit and antipode of a Hopf algebra by $\nabla$, $\Delta$, $\eta$, $\varepsilon$, and $S$ respectively.  We write the product $\nabla$ as juxtaposition when this will not cause confusion, and use Sweedler notation
\[ \ts
  \Delta(a) = \sum_{(a)} a_{1} \otimes a_{2}
\]
for coproducts.  For a $\kk$-module $V$, we will simply write $\End V$ for $\End_\kk V$.  All tensor products are over $\kk$ unless otherwise indicated.

Let $(\Lambda,+)$ be a commutative monoid isomorphic (as a monoid) to $\N^r$.  We denote the identity element of $\Lambda$ by $0$.   We say that the algebra $(H,\nabla,\eta)$ with multiplication $\nabla$ and unit $\eta$ is \emph{$\Lambda$-graded} if $H = \bigoplus_{\lambda \in \Lambda} H_\lambda$, where each $H_\lambda$, $\lambda \in \Lambda$, is finitely generated and free as a $\kk$-module, and
\[
  \eta(\kk) \subseteq H_0,\quad
  \nabla(H_\lambda \otimes H_\mu) \subseteq H_{\lambda + \mu},\quad \lambda,\mu \in \Lambda.
\]
Similarly, a coalgebra $(H,\Delta,\varepsilon)$ with comultiplication $\Delta$ and counit $\varepsilon$ is \emph{$\Lambda$-graded} if $H = \bigoplus_{\lambda \in \Lambda} H_\lambda$, where each $H_\lambda$, $\lambda \in \Lambda$, is finitely generated and free as a $\kk$-module, and
\begin{gather*}
  \varepsilon(H_\lambda) = 0 \text{ for } \lambda \in \Lambda\setminus\{0\}, \\
  \Delta(H_\lambda) \subseteq \bigoplus_{\mu+\nu=\lambda} H_\mu \otimes H_\nu,\quad \lambda,\mu,\nu \in \Lambda.
\end{gather*}

Fix $q \in \kk^\times$, and let $\chi=(\chi',\chi'')$ be a pair of biadditive maps $\chi',\chi'' \colon \Lambda\times\Lambda\to\Z$. Following \cite[Part~II.2]{Ri96}, define a new multiplication $*_\chi$ on $H \otimes H$ by the condition that, for homogeneous elements $a_i, b_i \in H$, $i=1,2$, we have
\begin{equation} \label{eq:twisted-mult}
  (a_1\otimes a_2) *_\chi (b_1\otimes b_2) = q^{\chi'(|a_2|,|b_1|)+\chi''(|a_1|,|b_2|)}a_1b_1\otimes a_2b_2,
\end{equation}
where $|a|$ denotes the degree of a homogeneous element $a \in H$.  (Whenever we write an expression involving $|a|$ for some $a \in H$, we implicitly assume that $a$ is homogeneous.)  Notice that this is similar to the definition of the product in~\cite[p.~3]{L10}.  Since $\chi',\chi''$ are biadditive, $*_\chi$ is associative, and we denote by $(H\otimes H)_\chi$ this \emph{twisted} associative algebra structure.

\begin{lem} \label{lem:twisting-mult-or-comult}
  If $(H,\nabla,\varepsilon)$ is a $\Lambda$-graded algebra and $(H,\Delta,\eta)$ is a $\Lambda$-graded coalgebra, then $\Delta \colon H \to (H \otimes H)_\chi$ is an algebra morphism if and only if $\nabla \colon (H \otimes H)_\chi \to H$ is a coalgebra morphism, where the coalgebra structure on $(H \otimes H)_\chi$ is given by
  \[
    (\Delta \otimes \Delta)_\chi (a \otimes b) = \sum_{(a),(b)} q^{\chi'(|a_2|,|b_1|)+\chi''(|a_1|,|b_2|)} a_1 \otimes b_1 \otimes a_2 \otimes b_2.
  \]
\end{lem}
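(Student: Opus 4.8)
The plan is to unwind each of the two conditions into an explicit equation and observe that they are not merely equivalent but literally identical. Concretely, I will check everything on homogeneous elements $a,b\in H$ (which suffices by $\kk$-linearity), and show that both ``$\Delta$ is an algebra morphism'' and ``$\nabla$ is a coalgebra morphism'' reduce to the single requirement
\[
  \Delta(ab) = \sum_{(a),(b)} q^{\chi'(|a_2|,|b_1|)+\chi''(|a_1|,|b_2|)}\, a_1 b_1 \otimes a_2 b_2 .
\]

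The one computation to record is the identity
\[
  \Delta(a) *_\chi \Delta(b) = (\nabla \otimes \nabla)\bigl((\Delta \otimes \Delta)_\chi (a \otimes b)\bigr),
\]
valid for all homogeneous $a,b$. Expanding the left side via~\eqref{eq:twisted-mult}, with $\Delta(a)=\sum_{(a)} a_1\otimes a_2$ and $\Delta(b)=\sum_{(b)} b_1\otimes b_2$, produces $\sum_{(a),(b)} q^{\chi'(|a_2|,|b_1|)+\chi''(|a_1|,|b_2|)} a_1 b_1\otimes a_2 b_2$; applying $\nabla\otimes\nabla$ (which multiplies the first two and the last two tensor factors) to the given formula for $(\Delta\otimes\Delta)_\chi(a\otimes b)$ yields exactly the same sum, precisely because the twisting exponent in $*_\chi$ and the twisting exponent in $(\Delta\otimes\Delta)_\chi$ are, by design, the same function of the four degrees. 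Since $\nabla(a\otimes b)=ab$, this identity shows that the multiplicativity of $\Delta$, namely $\Delta(ab)=\Delta(a)*_\chi\Delta(b)$, coincides term-for-term with the comultiplicativity of $\nabla$, namely $\Delta(\nabla(a\otimes b))=(\nabla\otimes\nabla)\bigl((\Delta\otimes\Delta)_\chi(a\otimes b)\bigr)$.

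It then remains to match the unit and counit data. Because $\chi'$ and $\chi''$ are biadditive, $\chi'(0,\mu)=\chi''(\lambda,0)=0$, so the twist is trivial whenever a degree-$0$ factor occurs; hence the unit of $(H\otimes H)_\chi$ is $\eta(1)\otimes\eta(1)$ and its counit is $\varepsilon\otimes\varepsilon$, exactly as in the untwisted tensor product, and the coassociativity and counit axioms for $(\Delta\otimes\Delta)_\chi$ follow from biadditivity by the argument dual to the one already used for the associativity of $*_\chi$. Consequently the auxiliary compatibilities ($\Delta$ preserving the unit, $\nabla$ preserving the counit) are unchanged by the twisting and are handled verbatim as in the classical case. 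I do not expect a genuine obstacle here: the entire content is the bookkeeping of the four degrees in the twisting exponent, and since that exponent is manifestly the same on both sides of the key identity, the equivalence drops out immediately.
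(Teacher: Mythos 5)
Your proof is correct and follows essentially the same route as the paper's: both reduce the two conditions to the single identity $\Delta(ab) = \Delta(a) *_\chi \Delta(b) = (\nabla \otimes \nabla)(\Delta \otimes \Delta)_\chi(a \otimes b)$, using that the twisting exponents in $*_\chi$ and in $(\Delta \otimes \Delta)_\chi$ agree. Your additional remarks on the unit and counit (which are untouched by the twisting, since biadditive maps vanish when an argument is $0$) are a harmless bit of extra thoroughness that the paper leaves implicit.
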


\begin{proof}
  The proof in the case that $\chi''=0$ can be found in~\cite[Lem.~2.7]{LZ00}.  The general result can be seen as follows.  The map $\Delta \colon H \to (H \otimes H)_\chi$ is an algebra homomorphism if and only if, for all $a,b \in H$, we have
  \[
    \Delta(ab) = \Delta(a) *_\chi \Delta(b)
    = \sum_{(a),(b)} q^{\chi'(|a_2|,|b_1|) + \chi''(|a_1|,|b_2|)} a_1 b_1 \otimes a_2 b_2
    = (\nabla \otimes \nabla) (\Delta \otimes \Delta)_\chi (a \otimes b),
  \]
  which is precisely the statement that $\nabla \colon (H \otimes H)_\chi \to H$ is a coalgebra homomorphism.
\end{proof}

\begin{defin}[$\Lambda$-graded connected $(q,\chi)$-bialgebra]\label{Lambda-grad}
  Suppose $(H,\nabla,\varepsilon)$ is a $\Lambda$-graded algebra and $(H,\Delta,\eta)$ is a $\Lambda$-graded coalgebra.  We say that $(H,\nabla,\Delta,\varepsilon,\eta)$ is a \emph{$\Lambda$-graded connected twisted bialgebra}, or, more precisely, a \emph{$(q,\chi)$-bialgebra} if $H_0 = \kk 1_H$ and
  \[
    \Delta \colon H \to (H\otimes H)_\chi
  \]
  is an algebra homomorphism.  It is a \emph{$(q,\chi)$-Hopf algebra} if, in addition, there is a $\kk$-linear map $S \colon H \to H$, called the \emph{antipode}, such that
  \[
    \nabla (\id \otimes S) \Delta = \eta \varepsilon = \nabla(S \otimes \id) \Delta.
  \]
  We say that $H$ is a \emph{twisted Hopf algebra} if it is a $(q,\chi)$-Hopf algebra for some choice of $(q,\chi)$.  We will write $(q,\chi',\chi'')$ instead of $(q,(\chi',\chi''))$ when we wish to make the components of $\chi$ explicit.
\end{defin}

In fact, a $\Lambda$-graded connected twisted bialgebra is always a twisted Hopf algebra.  The proof of the existence of the antipode in the case $\chi''=0$ can be found in~\cite[Th.~2.10]{LZ00}.  The proof is identical in the more general setting.  In particular, the antipode $S$ is defined in the same way (see the proof of Lemma~\ref{lem:S-adjointness}).

\begin{rem}
  In the case where $q^{\chi'(\lambda_1,\lambda_2)+\chi''(\mu_1,\mu_2)}=1$ for all $\lambda_1,\lambda_2, \mu_1,\mu_2\in\Lambda$ (for example when $q=1$ or $\chi' = \chi'' = 0$), we have that $(H\otimes H)_\chi=H\otimes H$ with componentwise multiplication.  In this case, we recover the usual definitions of bialgebra and Hopf algebra.
\end{rem}

For a biadditive map $\zeta \colon \Lambda \times \Lambda \to \Z$, define
\[
  \zeta^T \colon \Lambda \times \Lambda \to \Z,\quad \zeta^T(\lambda,\mu)=\zeta(\mu,\lambda),\quad \lambda,\mu \in \Lambda.
\]

\begin{rem} \label{rem:non-unique}
  For a given twisted Hopf algebra $H$, the data $(q,\chi',\chi'')$ is not unique. Obviously if $H$ is a $(q,\chi',\chi'')$-Hopf algebra, it is also a $(q^{-1},-\chi',-\chi'')$-Hopf algebra. But even fixing the choice of $q$ does not determine $\chi$. For example if $H$ is a commutative $(q,\chi',\chi'')$-Hopf algebra, it is straightforward to verify that is is also a $(q,(\chi'')^T,(\chi')^T)$-Hopf algebra.
  \details{
    We have that
    \[
      \Delta(ab)=\Delta(a) *_\chi \Delta(b)=\sum_{(a),(b)} q^{\chi'(|a_2|,|b_1|)+\chi''(|a_1|,|b_2|)}a_1b_1\otimes a_2b_2,
    \]
    but also
    \begin{align*}
      \Delta(ab)&=\Delta(ba) =\Delta(b) *_\chi \Delta(a) \\
      &=\sum_{(a),(b)} q^{\chi'(|b_2|,|a_1|)+\chi''(|b_1|,|a_2|)}b_1a_1\otimes b_2a_2 \\
      &= \sum_{(a),(b)} q^{\chi'(|b_2|,|a_1|)+\chi''(|b_1|,|a_2|)}a_1b_1\otimes a_2b_2.
    \end{align*}
    Hence $H$ is also a $(q,(\chi'')^T,(\chi')^T)$-Hopf algebra.
  }
  Similarly, if $H$ is a cocommutative $(q,\chi',\chi'')$-Hopf algebra, then it is also a $(q,\chi'',\chi')$-Hopf algebra.
  \details{
    We have
    \begin{align*}
      \Delta(ab) &= \Delta^\text{op}(ab) = \text{flip} \circ \Delta(ab) \\
      &= \text{flip} \left( \Delta(a) *_\chi \Delta(b) \right) \\
      &= \text{flip} \left( \Delta^\text{op}(a) *_\chi \Delta^\text{op}(b) \right) \\
      &= \text{flip} \left( \sum_{(a),(b)} q^{\chi'(|a_1|,|b_2|) + \chi''(|a_2|, |b_1|)} a_2 b_2 \otimes a_1 b_1 \right) \\
      &= \sum_{(a),(b)} q^{\chi'(|a_1|,|b_2|) + \chi''(|a_2|, |b_1|)} a_1 b_1 \otimes a_2 b_2
    \end{align*}
  }
\end{rem}

\begin{defin}[Twisted pairing]\label{def:hopf-pairing}
  Suppose $H$ and $H'$ both have algebra and coalgebra structures, $c$ is an invertible element in $\kk$, and $\gamma = (\gamma', \gamma'')$ is a pair of biadditive maps $\gamma', \gamma'' \colon \Lambda \times \Lambda \to \Z$.  Then a \emph{$(c,\gamma)$-twisted pairing} is a bilinear map $\langle -, - \rangle \colon H \times H' \to \kk$ such that $\langle -,-\rangle |_{H_\lambda\times H'_\mu}  \equiv 0$ when $\lambda, \mu \in \Lambda$, $\lambda\neq\mu$, and
  \begin{gather*} \ts
    \langle xy, a \rangle = c^{\gamma'(|x|,|y|)} \langle x \otimes y, \Delta(a) \rangle, \\ \ts
    \langle x, ab \rangle = c^{\gamma''(|a|,|b|)}\langle \Delta(x), a \otimes b \rangle, \\
    \langle 1_H, a \rangle = \varepsilon(a),\quad \langle x, 1_{H'} \rangle = \varepsilon(x),
  \end{gather*}
  for all homogeneous $x,y \in H$, $a,b \in H'$, where we define
  \[
    \langle -, - \rangle \colon (H \otimes H) \otimes (H' \otimes H') \to \kk,\quad \langle x \otimes y, a \otimes b \rangle = \langle x, a \rangle \langle y, b \rangle,\quad x,y \in H,\ a,b \in H'.
  \]
  We will write $(c,\gamma',\gamma'')$ instead of $(c,(\gamma',\gamma''))$ when we wish to make the components of $\gamma$ explicit.
\end{defin}

\begin{lem} \label{lem:S-adjointness}
  Suppose that $H$ and $H'$ are twisted Hopf algebras and $\langle -, - \rangle \colon H \otimes H' \to \kk$ is a $(c,\gamma)$-twisted pairing.  If $\gamma' = \gamma''$, then $\langle x, S(a) \rangle = \langle S(x), a \rangle$ for all $x \in H$ and $a \in H'$.
\end{lem}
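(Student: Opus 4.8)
The plan is to adapt the classical Hopf-algebraic argument that the antipode is self-adjoint with respect to a Hopf pairing. Classically one works in the convolution monoid of graded bilinear forms $H \times H' \to \kk$ and shows that the two candidate maps $(x,a) \mapsto \langle S(x), a \rangle$ and $(x,a) \mapsto \langle x, S(a) \rangle$ are, respectively, a left inverse and a right inverse of the pairing $\langle -, - \rangle$ itself; since inverses in a monoid are unique, the two maps must coincide. Here $S$ denotes the antipode of $H$ on the left and of $H'$ on the right. Recall that on a $\Lambda$-graded connected twisted Hopf algebra the antipode is degree-preserving (it is the convolution inverse of $\id$, hence homogeneous of degree $0$), so both candidate maps vanish off the diagonal $|x| = |a|$, exactly as the pairing does.

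First I would carry out the left-inverse computation: expand $\sum \langle S(x_1), a_1 \rangle \langle x_2, a_2 \rangle$, sum over $(a)$ using the first compatibility relation of Definition~\ref{def:hopf-pairing}, and then apply $\sum_{(x)} S(x_1) x_2 = \eta\varepsilon(x)$ together with $\langle 1_H, a \rangle = \varepsilon(a)$. Dually, the right-inverse computation expands $\sum \langle x_1, a_1 \rangle \langle x_2, S(a_2) \rangle$, sums over $(x)$ using the second compatibility relation, and applies $\sum_{(a)} a_1 S(a_2) = \eta\varepsilon(a)$ and $\langle x, 1_{H'} \rangle = \varepsilon(x)$.

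The main obstacle is that with the ordinary componentwise convolution neither computation collapses: since the antipode relation cancels only the total Sweedler sum and not its individual terms, one is left with residual twist factors $c^{-\gamma'(|x_1|,|x_2|)}$ in the first computation and $c^{-\gamma''(|a_1|,|a_2|)}$ in the second, which cannot be pulled outside the sum. The remedy is to replace componentwise convolution by the twisted convolution
\[
  (\beta \diamond \beta')(x,a) = \sum_{(x),(a)} c^{\gamma'(|x_1|,|x_2|)}\, \beta(x_1, a_1)\, \beta'(x_2, a_2),
\]
whose inserted factor is designed to cancel the residual $c^{-\gamma'}$. One checks, using biadditivity of $\gamma'$ and coassociativity, that $\diamond$ is associative, and that $(x,a) \mapsto \varepsilon(x)\varepsilon(a)$ is a two-sided unit (the surviving terms carry a degree-$0$ tensor factor, on which $\gamma'$ vanishes). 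With respect to $\diamond$ the left-inverse computation now goes through with the factors cancelling exactly. In the right-inverse computation, however, the graded support forces $|x_i| = |a_i|$, so the inserted factor reads $c^{\gamma'(|a_1|,|a_2|)}$ while the compatibility relation produces $c^{-\gamma''(|a_1|,|a_2|)}$; these cancel precisely when $\gamma' = \gamma''$, which is where the hypothesis enters. Granting $\gamma' = \gamma''$, both computations yield the unit of the monoid $(B,\diamond)$, and uniqueness of two-sided inverses gives $\langle x, S(a) \rangle = \langle S(x), a \rangle$ for all $x \in H$ and $a \in H'$.
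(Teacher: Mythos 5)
Your proof is correct, but it takes a genuinely different route from the paper's. The paper argues by induction on degree, using the explicit recursive construction of the antipode from the connectedness assumption ($S(1)=1$ and $S(a) = -a - \sum a'S(a'')$ where $\Delta(a) = a\otimes 1 + 1 \otimes a + \sum a'\otimes a''$): one moves $S$ across the pairing term by term, invoking the inductive hypothesis on the lower-degree pieces $a''$, and the hypothesis $\gamma'=\gamma''$ enters when the factor $c^{\gamma''(|x'|,|x''|)}$ (obtained from graded orthogonality, $|x'|=|a'|$, $|x''|=|a''|$) must be read as $c^{\gamma'(|x'|,|x''|)}$ in order to apply the first pairing axiom. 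Your argument instead never touches the recursion: you use only the defining antipode identities $\nabla(S\otimes\id)\Delta = \eta\varepsilon = \nabla(\id\otimes S)\Delta$, packaged in a twisted convolution monoid of bilinear forms, and conclude by uniqueness of two-sided inverses. Your key technical move --- inserting the factor $c^{\gamma'(|x_1|,|x_2|)}$ into the convolution so that the left-inverse computation cancels on the nose, and then observing that graded orthogonality converts the inserted factor to $c^{\gamma'(|a_1|,|a_2|)}$ in the right-inverse computation, where it must cancel $c^{-\gamma''(|a_1|,|a_2|)}$ --- isolates exactly where $\gamma'=\gamma''$ is needed, which is a genuine conceptual gain; it also shows the lemma holds for any antipode satisfying the axioms, not just the recursively constructed one. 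The costs are the routine but nontrivial verifications of associativity and unitality of $\diamond$ (both of which you correctly reduce to biadditivity of $\gamma'$ and vanishing of $\gamma'$ on degree-zero arguments), and a slightly loose justification that $S$ preserves degree: ``convolution inverse of $\id$'' is a statement that itself needs care in the twisted setting, whereas the clean argument is induction on the same recursion the paper uses ($\Delta$ preserves total degree, so $S(a) = -a - \sum a'S(a'')$ is homogeneous of degree $|a|$). Both proofs need this degree-preservation and the graded orthogonality of the pairing in an essential way.
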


\begin{proof}
  The antipode $S$ of $H^+$ is defined recursively as follows (see~\cite[Th.~2.10]{LZ00}).  We have $S(1)=1$ and, for homogeneous $a \in H^+$, we can write $\Delta(a) = a \otimes 1 + 1 \otimes a + \sum a' \otimes a''$, where the sum is over a set of pairs $(a',a'') \in H^+$ where both $a'$ and $a''$ are of strictly positive degree.  Then
  \[ \ts
    S(a) = - a - \sum a'S(a'').
  \]
  The antipode of $H^-$ is defined in an analogous manner.

  Let $a \in H^+$ and $x \in H^-$.  Since the antipode preserves degree, and elements of different degree are orthogonal, we may assume that $a$ and $x$ are homogeneous of the same degree.  The result is clearly true if they are of degree zero.  Thus, assume the degree of $a$ and $x$ is positive and that the result holds for all elements of smaller degree.  Then
  \begin{align*}
    \langle x, S(a) \rangle &= \ts -\langle x, a \rangle - \sum \langle x, a' S(a'') \rangle \\
    &= \ts -\langle x, a \rangle - \sum c^{\gamma''(|a'|,|a''|)} \langle \Delta(x), a' \otimes S(a'') \rangle \\
    &= \ts -\langle x, a \rangle - \sum \sum c^{\gamma''(|a'|,|a''|)} \langle x' \otimes x'', a' \otimes S(a'') \rangle \\
    &= \ts -\langle x, a \rangle - \sum \sum c^{\gamma''(|x'|,|x''|)} \langle x' \otimes S(x''), a' \otimes a'' \rangle \\
    &= \ts -\langle x, a \rangle - \sum c^{\gamma'(|x'|,|x''|)} \langle x' \otimes S(x''), \Delta(a) \rangle \\
    &= \ts -\langle x, a \rangle - \sum \langle x' S(x''), a \rangle \\
    &= \langle S(x), a \rangle. \qedhere
  \end{align*}
\end{proof}

Recall that a bilinear map $\langle - , - \rangle \colon V \otimes W \to \kk$ is called a \emph{perfect pairing} if the induced map $\Phi \colon V \to W^*$ given by $\Phi(v)(w) = \langle v, w \rangle$ is an isomorphism.

\begin{defin}[Dual pair] \label{def:dual-pair}
  We say that $(H^+, H^-)$ is a \emph{$(c,\gamma)$-dual pair} of twisted Hopf algebras if $H^+$ and $H^-$ are both twisted Hopf algebras, and there exists a $(c,\gamma)$-twisted pairing $\langle -, - \rangle \colon H^- \times H^+ \to \kk$ such that $\langle -,- \rangle|_{H^{-}_\lambda\times H^+_\lambda}$, $\lambda \in \Lambda$, is a perfect pairing.  We say that the pair $(H^+,H^-)$ is \emph{twisted dual} if it is a $(c,\gamma)$-dual pair for some $(c,\gamma)$.
\end{defin}

\begin{lem} \label{lem:dual-twisted-Hopf}
  Suppose $H^+$ is a $(q,\chi)$-Hopf algebra, $H^-$ is both an algebra and a coalgebra, and $\langle -, - \rangle \colon H^- \times H^+ \to \kk$ is a $(q,\gamma)$-twisted pairing such that $\langle -,- \rangle|_{H^{-}_\lambda\times H^+_\lambda}$, $\lambda \in \Lambda$, is a perfect pairing.  Then $H^-$ is a $(q,\xi$)-Hopf algebra, where
  \begin{equation} \label{eq:xi-dual}
    \xi = (\xi',\xi''),\quad \xi' = (\chi')^T + \gamma' - (\gamma'')^T,\quad \xi'' = \chi'' + \gamma' - \gamma''.
  \end{equation}
  In particular, $(H^+,H^-)$ is a $(q,\gamma)$-dual pair of twisted Hopf algebras.
\end{lem}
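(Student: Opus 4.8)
The plan is to show that $H^-$, equipped with its given algebra and coalgebra structures, is a $\Lambda$-graded connected $(q,\xi)$-bialgebra; the existence of the antipode, and hence the conclusion that $H^-$ is a $(q,\xi)$-Hopf algebra, will then follow from the remark immediately after Definition~\ref{Lambda-grad}. Once $H^-$ is known to be a twisted Hopf algebra, the final assertion that $(H^+,H^-)$ is a $(q,\gamma)$-dual pair is immediate from Definition~\ref{def:dual-pair}, since by hypothesis $\langle -,-\rangle$ is a $(q,\gamma)$-twisted pairing restricting to a perfect pairing on each $H^-_\lambda \times H^+_\lambda$.

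First I would record the structural facts, all of which follow from perfectness together with the vanishing of $\langle -,-\rangle$ off the diagonal. For homogeneous $x,y \in H^-$ of degrees $\lambda,\mu$, the first axiom of Definition~\ref{def:hopf-pairing} gives $\langle xy,a\rangle = 0$ unless $|a| = \lambda+\mu$, and nondegeneracy then forces $xy \in H^-_{\lambda+\mu}$; combined with $\langle 1_{H^-},a\rangle = \varepsilon(a)$ this shows $H^-$ is a $\Lambda$-graded algebra. Dually, the second axiom and $\langle x,1_{H^+}\rangle = \varepsilon(x)$ show $H^-$ is a $\Lambda$-graded coalgebra. Finally $H^-_0$ is free of rank one, being perfectly paired with $H^+_0 = \kk 1_{H^+}$, and evaluating $\langle 1_{H^-},1_{H^+}\rangle = \varepsilon(1_{H^+}) = 1$ shows $1_{H^-}$ generates it, so $H^-_0 = \kk 1_{H^-}$ and $H^-$ is connected.

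The heart of the argument is to verify that $\Delta_{H^-} \colon H^- \to (H^- \otimes H^-)_\xi$ is an algebra homomorphism, i.e. that $\Delta_{H^-}(xy) = \Delta_{H^-}(x) *_\xi \Delta_{H^-}(y)$ for homogeneous $x,y$. Because the induced pairing on $(H^-\otimes H^-)\times(H^+\otimes H^+)$ is a tensor product of perfect pairings, hence again perfect on matching graded pieces, it is enough to show the two sides pair identically with an arbitrary $a\otimes b$. For the left-hand side I would apply the two pairing axioms in turn, expand $\Delta_{H^+}(ab)$ through the $(q,\chi)$-bialgebra structure of $H^+$ via~\eqref{eq:twisted-mult}, and then re-expand $\langle x, a_1 b_1\rangle$ and $\langle y, a_2 b_2\rangle$ using $\Delta_{H^-}$; for the right-hand side I would expand $*_\xi$ via~\eqref{eq:twisted-mult} and apply the first axiom to $\langle x_1 y_1, a\rangle$ and $\langle x_2 y_2, b\rangle$. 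Both sides collapse to the same sum of monomials $\langle x_1,a_1\rangle\langle x_2,b_1\rangle\langle y_1,a_2\rangle\langle y_2,b_2\rangle$, each weighted by a power of $q$.

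The main (indeed the only) obstacle is then the bookkeeping of these exponents of $q$. On each surviving monomial the off-diagonal vanishing imposes $|x_1|=|a_1|=:p$, $|x_2|=|b_1|=:s$, $|y_1|=|a_2|=:r$, $|y_2|=|b_2|=:t$, and I would expand both exponents using biadditivity of $\chi',\chi'',\gamma',\gamma''$. After the dust settles, equality of the two exponents reduces to the single identity
\[
  \xi'(s,r) + \xi''(p,t) = \big(\gamma' + (\chi')^T - (\gamma'')^T\big)(s,r) + \big(\gamma' - \gamma'' + \chi''\big)(p,t),
\]
valid for all $p,r,s,t$, where I have used $\chi'(r,s) = (\chi')^T(s,r)$ and $\gamma''(r,s) = (\gamma'')^T(s,r)$. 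This is precisely the definition~\eqref{eq:xi-dual} of $\xi$, so the two exponents match, the two sides agree, and $\Delta_{H^-}$ is an algebra homomorphism. This completes the proof that $H^-$ is a $(q,\xi)$-bialgebra, and with it the lemma.
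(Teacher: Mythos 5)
Your proposal is correct and follows essentially the same route as the paper: both prove that $\Delta_{H^-}$ is an algebra morphism into $(H^-\otimes H^-)_\xi$ by pairing against arbitrary $a\otimes b$, pushing the computation through the pairing axioms and the $(q,\chi)$-bialgebra structure of $H^+$, and concluding by nondegeneracy, with your exponent identity matching the paper's chain of equalities exactly. The only difference is one of presentation: you spell out the preliminary verifications (gradedness, connectedness, the antipode via the remark after Definition~\ref{Lambda-grad}) that the paper leaves implicit, and you match exponents by meeting in the middle rather than via one long chain of equalities.
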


\begin{proof}
  Let $x,y \in H^-$ and $a,b \in H^+$.  Then we have
  \begin{align*}
    \langle& \Delta(xy), a \otimes b \rangle = q^{-\gamma''(|a|,|b|)} \langle xy, ab \rangle \\
    &= q^{\gamma'(|x|,|y|)-\gamma''(|a|,|b|)} \langle x \otimes y, \Delta(ab) \rangle \\
    &= q^{\gamma'(|x|,|y|)-\gamma''(|a|,|b|)} \left\langle x \otimes y, \Delta(a) *_\chi \Delta(b) \right\rangle \\
    &= \ts q^{\gamma'(|x|,|y|)-\gamma''(|a|,|b|)} \left\langle x \otimes y, \sum_{(a),(b)} q^{\chi'(|a_2|,|b_1|) + \chi''(|a_1|,|b_2|)} a_1 b_1 \otimes a_2 b_2 \right\rangle  \\
    &= \ts q^{\gamma'(|x|,|y|)-\gamma''(|a|,|b|)} \sum_{(a),(b)} q^{\chi'(|a_2|,|b_1|) + \chi''(|a_1|,|b_2|)} \langle x, a_1b_1 \rangle \langle y, a_2b_2 \rangle \\
    &= \ts \sum_{(a),(b)} q^{\chi'(|a_2|,|b_1|) + \chi''(|a_1|,|b_2|)} q^{\gamma'(|x|,|y|)-\gamma''(|a|,|b|)+\gamma''(|a_1|,|b_1|) + \gamma''(|a_2|,|b_2|)} \langle \Delta(x), a_1 \otimes b_1 \rangle \langle \Delta(y), a_2 \otimes b_2 \rangle \\
    &= \ts \sum_{(a),(b),(x),(y)} q^{\chi'(|a_2|,|b_1|) + \chi''(|a_1|,|b_2|)} q^{\gamma'(|x|,|y|) - \gamma''(|a_1|,|b_2|) - \gamma''(|a_2|,|b_1|)} \langle x_1, a_1 \rangle \langle x_2, b_1 \rangle \langle y_1, a_2 \rangle \langle y_2, b_2 \rangle \\
    &= \ts \sum_{(x),(y)} q^{\chi'(|y_1|,|x_2|) + \chi''(|x_1|,|y_2|)} q^{\gamma'(|x|,|y|) - \gamma''(|x_1|,|y_2|) - \gamma''(|y_1|,|x_2|)} \langle x_1 \otimes y_1, \Delta(a) \rangle \langle x_2 \otimes y_2, \Delta(b) \rangle \\
    &= \ts \sum_{(x),(y)} q^{\chi'(|y_1|,|x_2|) + \chi''(|x_1|,|y_2|)} q^{\gamma'(|x|,|y|) - \gamma'(|x_1|,|y_1|) - \gamma'(|x_2|,|y_2|) - \gamma''(|x_1|,|y_2|) - \gamma''(|y_1|,|x_2|)} \langle x_1 y_1, a \rangle \langle x_2 y_2, b \rangle  \\
    &= \ts \sum_{(x),(y)} q^{\chi'(|y_1|,|x_2|) + \chi''(|x_1|,|y_2|)} q^{\gamma'(|x_1|,|y_2|) + \gamma'(|x_2|,|y_1|) - \gamma''(|x_1|,|y_2|) - \gamma''(|y_1|,|x_2|)} \langle x_1 y_1 \otimes x_2 y_2, a \otimes b \rangle \\
    &= \ts \left\langle \sum_{(x),(y)} q^{\chi'(|y_1|,|x_2|) + \chi''(|x_1|,|y_2|)} q^{\gamma'(|x_1|,|y_2|) + \gamma'(|x_2|,|y_1|) - \gamma''(|x_1|,|y_2|) - \gamma''(|y_1|,|x_2|)} x_1 y_1 \otimes x_2 y_2, a \otimes b \right\rangle \\
    &= \left\langle \Delta(x) *_{((\chi')^T + \gamma' - (\gamma'')^T, \chi'' + \gamma' - \gamma'')} \Delta(y), a \otimes b \right\rangle.
  \end{align*}
  The result then follows from the nondegeneracy of the bilinear form.
\end{proof}

%
\section{The twisted Heisenberg double} \label{sec:h-definition}
%

For the remainder of this section, we fix a $(q,\gamma)$-dual pair $(H^+,H^-)$ of twisted Hopf algebras, where $H^{+}$ is a $(q,\chi)$-Hopf algebra and $H^-$ is a $(q,\xi)$-Hopf algebra, with $\xi$ given by~\eqref{eq:xi-dual}.

Any $a \in H^+$ defines an element $\pL{a} \in \End H^+$ by left multiplication.  Similarly, any $x \in H^-$ defines an element $\pR{x} \in \End H^- $ by right multiplication, whose adjoint $\pR{x}^*$ is an element of $\End H^+$.  (In the case that $H^+$ or $H^-$ is commutative, we often omit the superscript $L$ or $R$.)  In this way we have $\kk$-algebra homomorphisms
\begin{gather}
  H^+ \hookrightarrow \End H^+,\quad a \mapsto \pL{a}, \label{eq:H+action} \\
  H^- \hookrightarrow \End H^+,\quad x \mapsto \pR{x}^*. \label{eq:H-action}
\end{gather}
The action of $H^-$ on $H^+$ given by~\eqref{eq:H-action} is called the \emph{left regular action}.  The maps~\eqref{eq:H+action} and~\eqref{eq:H-action} are both injective.

Since $H^+ = \bigoplus_{\lambda \in \Lambda} H_\lambda^+$ is $\Lambda$-graded, we have a natural algebra $G(\Lambda)$-grading $\End H^+ = \bigoplus_{\lambda\in G(\Lambda)} \End_\lambda H^+$. Here $G(\Lambda)\cong\Z^r$ is the Grothendieck group of the monoid $\Lambda$, obtained by formally adjoining inverses. It is routine to verify that the map~\eqref{eq:H+action} sends $H^+_\lambda$ to $\End_\lambda H^+$ and the map~\eqref{eq:H-action} sends $H^-_\lambda$ to $\End_{-\lambda} H^+$ for all $\lambda \in \Lambda$.

\begin{lem}
  The left regular action of $H^-$ on $H^+$ is given by
  \[
    \pR{x}^*(a) = \sum_{(a)} q^{\gamma'(|a_1|,|a_2|)} \langle x, a_2 \rangle a_1,\quad x \in H^-,\ a \in H^+.
  \]
\end{lem}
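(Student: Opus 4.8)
The plan is to unwind the definition of the adjoint $\pR{x}^*$ and compute it directly from the twisted pairing axioms. Recall that $\pR{x} \in \End H^-$ is right multiplication by $x$, so $\pR{x}(y) = yx$, and its adjoint $\pR{x}^* \in \End H^+$ is characterized by
\[
  \langle yx, a \rangle = \langle y, \pR{x}^*(a) \rangle \quad \text{for all } y \in H^-,\ a \in H^+.
\]
Thus it suffices to show that the claimed element $\sum_{(a)} q^{\gamma'(|a_1|,|a_2|)} \langle x, a_2 \rangle a_1$ satisfies this defining relation, and then to invoke nondegeneracy to conclude that it equals $\pR{x}^*(a)$.

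First I would expand $\langle yx, a \rangle$ using the first twisted pairing axiom of Definition~\ref{def:hopf-pairing}, taking $c = q$ since $(H^+,H^-)$ is a $(q,\gamma)$-dual pair. Applied to the product $yx$ (with both factors in $H^-$), this gives
\[
  \langle yx, a \rangle = q^{\gamma'(|y|,|x|)} \langle y \otimes x, \Delta(a) \rangle = q^{\gamma'(|y|,|x|)} \sum_{(a)} \langle y, a_1 \rangle \langle x, a_2 \rangle,
\]
using $\Delta(a) = \sum_{(a)} a_1 \otimes a_2$ and the factorization of the pairing on tensor products.

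The key step is then a grading argument. Since the twisted pairing vanishes on $H^-_\lambda \times H^+_\mu$ whenever $\lambda \neq \mu$, the summand $\langle y, a_1 \rangle \langle x, a_2 \rangle$ is zero unless $|y| = |a_1|$ and $|x| = |a_2|$. On every surviving term I may therefore replace the constant prefactor $\gamma'(|y|,|x|)$ by $\gamma'(|a_1|,|a_2|)$ and absorb it into the sum, obtaining
\[
  \langle yx, a \rangle = \sum_{(a)} q^{\gamma'(|a_1|,|a_2|)} \langle y, a_1 \rangle \langle x, a_2 \rangle = \left\langle y, \sum_{(a)} q^{\gamma'(|a_1|,|a_2|)} \langle x, a_2 \rangle a_1 \right\rangle.
\]
As this holds for all homogeneous $y \in H^-$ and the pairing restricts to a perfect, hence nondegenerate, pairing on each graded piece $H^-_\lambda \times H^+_\lambda$, the element appearing in the right entry is uniquely determined and must coincide with $\pR{x}^*(a)$.

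I expect the only genuine subtlety to lie in the grading bookkeeping of the key step: one must verify that the scalar $q^{\gamma'(|y|,|x|)}$, which a priori depends on $y$, agrees with $q^{\gamma'(|a_1|,|a_2|)}$ precisely on the nonvanishing summands, so that it can legitimately be moved inside the sum. This is exactly where the degree-orthogonality clause of the twisted pairing does the work, and it is also what pins down $\gamma'$ (rather than $\gamma''$) in the final formula. The concluding appeal to nondegeneracy is routine given the dual pair hypothesis.
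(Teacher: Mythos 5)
Your proof is correct and follows essentially the same route as the paper: expand $\langle y, \pR{x}^*(a)\rangle = \langle yx,a\rangle$ via the twisted pairing axiom, replace $q^{\gamma'(|y|,|x|)}$ by $q^{\gamma'(|a_1|,|a_2|)}$ on the nonvanishing summands using degree orthogonality, and conclude by nondegeneracy. The only difference is cosmetic: you spell out the grading bookkeeping that the paper leaves implicit in its final equality, which is a reasonable thing to make explicit.
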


\begin{proof}
  For $x,y \in H^-$ and $a \in H^+$, we have
  \begin{multline*}
    \langle y, \pR{x}^*(a) \rangle = \langle yx, a \rangle = q^{\gamma'(|y|,|x|)} \langle y \otimes x, \Delta(a) \rangle \\
    = \sum_{(a)} q^{\gamma'(|y|,|x|)} \langle y, a_1 \rangle \langle x, a_2 \rangle = \left\langle y, \sum_{(a)} q^{\gamma'(|a_1|,|a_2|)} \langle x, a_2 \rangle a_1 \right\rangle.
  \end{multline*}
  The result then follows from the nondegeneracy of the bilinear form.
\end{proof}

\begin{lem} \label{lem:adjoint-action-on-product}
  If $x \in H^-$ and $a,b \in H^+$, then
  \[ \ts
    \pR{x}^*(ab) = \sum_{(x)} q^{\gamma''(|x_1|,|b|-|x_2|)+\gamma''(|a|,|x_2|) + \xi'(|b|-|x_2|,|x_1|)+\xi''(|a|-|x_1|,|x_2|)} \pR{x_1}^*(a) \pR{x_2}^*(b).
  \]
\end{lem}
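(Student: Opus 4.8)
The plan is to expand both sides of the asserted identity directly, using the explicit formula for the left regular action from the preceding lemma, and then to match the exponents of $q$ term by term.

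First I would compute the left-hand side. Applying $\pR{x}^*(c) = \sum_{(c)} q^{\gamma'(|c_1|,|c_2|)} \langle x, c_2 \rangle c_1$ with $c = ab$, and using that $\Delta \colon H^+ \to (H^+ \otimes H^+)_\chi$ is an algebra homomorphism, I write $\Delta(ab) = \Delta(a) *_\chi \Delta(b)$, so that $(ab)_1 = a_1 b_1$ and $(ab)_2 = a_2 b_2$ up to the factor $q^{\chi'(|a_2|,|b_1|)+\chi''(|a_1|,|b_2|)}$. Expanding the remaining pairing via the coproduct axiom of Definition~\ref{def:hopf-pairing}, $\langle x, a_2 b_2 \rangle = q^{\gamma''(|a_2|,|b_2|)} \sum_{(x)} \langle x_1, a_2 \rangle \langle x_2, b_2 \rangle$, gives
\[
  \pR{x}^*(ab) = \sum_{(a),(b),(x)} q^{L}\, \langle x_1, a_2 \rangle \langle x_2, b_2 \rangle\, a_1 b_1,
\]
where $L = \chi'(|a_2|,|b_1|)+\chi''(|a_1|,|b_2|) + \gamma'(|a_1|+|b_1|,|a_2|+|b_2|) + \gamma''(|a_2|,|b_2|)$.

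Next I would expand the claimed right-hand side. Applying the single-factor formula twice gives $\pR{x_1}^*(a)\,\pR{x_2}^*(b) = \sum_{(a),(b)} q^{\gamma'(|a_1|,|a_2|)+\gamma'(|b_1|,|b_2|)} \langle x_1,a_2\rangle \langle x_2,b_2\rangle\, a_1 b_1$, so multiplying by the prefactor $q^E$ with $E$ as in the statement produces the same monomials $\langle x_1,a_2\rangle\langle x_2,b_2\rangle\, a_1 b_1$, now carrying the exponent $R = E + \gamma'(|a_1|,|a_2|)+\gamma'(|b_1|,|b_2|)$. Since both sides are now sums of the same monomials over the same index set, it remains only to verify $L = R$ on each surviving term.

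The one step requiring care is the degree substitution. The coefficient $\langle x_1, a_2 \rangle \langle x_2, b_2 \rangle$ vanishes unless $|x_1| = |a_2|$ and $|x_2| = |b_2|$, since the pairing is zero on components of unequal degree. On every nonzero term I may therefore replace $|x_1|$ by $|a_2|$ and $|x_2|$ by $|b_2|$ inside $E$, which turns $|b|-|x_2|$ into $|b_1|$ and $|a|-|x_1|$ into $|a_1|$. Substituting $\xi' = (\chi')^T + \gamma' - (\gamma'')^T$ and $\xi'' = \chi'' + \gamma' - \gamma''$ from~\eqref{eq:xi-dual} and expanding all $\gamma'$ and $\gamma''$ by biadditivity, the two opposite-sign $\gamma''$-terms cancel and $R$ collapses to $L$; nondegeneracy is not even needed once the monomials are matched. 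The whole argument is bookkeeping, and the only real obstacle is organizing this exponent comparison without confusing the ``total'' degrees $|x_1|,|x_2|,|a|,|b|$ appearing in $E$ with the coproduct-component degrees $|a_i|,|b_j|$ produced by the expansion. An alternative route pairs both sides with an arbitrary $y \in H^-$ and invokes nondegeneracy, but it forces one to expand $\Delta(yx)$ via the $\xi$-twisted multiplication on $H^-$ and is no simpler.
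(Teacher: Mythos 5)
Your proof is correct, but it takes a different route from the paper's. The paper does exactly the computation you dismissed in your last sentence: it pairs $\pR{x}^*(ab)$ with an arbitrary $y \in H^-$, writes $\langle y, \pR{x}^*(ab)\rangle = \langle yx, ab\rangle = q^{\gamma''(|a|,|b|)}\langle \Delta(yx), a \otimes b\rangle$, expands $\Delta(yx) = \Delta(y) *_\xi \Delta(x)$ using the $(q,\xi)$-bialgebra structure of $H^-$, converts each $\langle y_i x_i, -\rangle$ back into $\langle y_i, \pR{x_i}^*(-)\rangle$, and concludes by nondegeneracy after one small biadditivity identity, namely $\gamma''(|a|,|b|) - \gamma''(|a|-|x_1|,|b|-|x_2|) = \gamma''(|x_1|,|b|-|x_2|) + \gamma''(|a|,|x_2|)$. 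You instead work entirely inside $H^+$: you expand $\Delta(ab) = \Delta(a) *_\chi \Delta(b)$, apply the explicit formula for the left regular action to both sides, and match exponents term by term after the legitimate degree substitutions $|x_1| = |a_2|$, $|x_2| = |b_2|$ (valid since the pairing vanishes on components of unequal degree, and both sides become literally the same triple sum over $(a),(b),(x)$). Both arguments consume the same inputs — the relation~\eqref{eq:xi-dual} between $\xi$, $\chi$, $\gamma$ and degree-orthogonality of the pairing — but they weight the bookkeeping differently. The paper's route is lighter on exponent algebra because the $\xi$-twisted coproduct of $H^-$ delivers the coefficients already in the form they appear in the statement; your route requires expanding $\gamma'(|a_1|+|b_1|,|a_2|+|b_2|)$ and substituting~\eqref{eq:xi-dual} by hand, but it has the genuine advantage you point out: once the action formula $\pR{x}^*(a) = \sum_{(a)} q^{\gamma'(|a_1|,|a_2|)}\langle x, a_2\rangle a_1$ is in hand, your verification never invokes nondegeneracy, whereas the paper's final step does. (One small imprecision: after substitution there are two cancelling pairs of $\gamma''$-terms, not one — $\pm\gamma''(|a_2|,|b_1|)$ and $\pm\gamma''(|a_1|,|b_2|)$ — leaving $\gamma''(|a_2|,|b_2|)$ to match the term coming from the pairing axiom on the left-hand side; this does not affect correctness.)
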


\begin{proof}
  For $x,y\in H^-$, $a,b\in H^+$ we have
  \begin{align*}
    \langle y, \pR{x}^*(ab) \rangle &= \langle yx, ab \rangle \\
    &= q^{\gamma''(|a|,|b|)}\langle \Delta(yx), a\otimes b \rangle \\
    &=\ts q^{\gamma''(|a|,|b|)} \left\langle \sum_{(x),(y)} q^{\xi'(|y_2|,|x_1|)+\xi''(|y_1|,|x_2|)} y_1x_1\otimes y_2x_2, a\otimes b \right\rangle \\
    &=\ts \sum_{(x),(y)} q^{\gamma''(|a|,|b|) + \xi'(|b|-|x_2|,|x_1|)+\xi''(|a|-|x_1|,|x_2|)} \langle y_1x_1, a\rangle \langle y_2x_2, b \rangle \\
    &=\ts \sum_{(x),(y)} q^{\gamma''(|a|,|b|) + \xi'(|b|-|x_2|,|x_1|)+\xi''(|a|-|x_1|,|x_2|)} \langle y_1, \pR{x_1}^*(a)\rangle \langle y_2,\pR{x_2}^*(b)\rangle \\
    &=\ts \left\langle \Delta(y), \sum_{(x)} q^{\gamma''(|a|,|b|) + \xi'(|b|-|x_2|,|x_1|)+\xi''(|a|-|x_1|,|x_2|)} \pR{x_1}^*(a)\otimes \pR{x_2}^*(b) \right\rangle \\
    &=\ts \left\langle y,  \sum_{(x)} q^{\gamma''(|a|,|b|) - \gamma''(|a|-|x_1|, |b|-|x_2|) + \xi'(|b|-|x_2|,|x_1|)+\xi''(|a|-|x_1|,|x_2|)} \pR{x_1}^*(a) \pR{x_2}^*(b) \right\rangle.
  \end{align*}
  The result then follows from the nondegeneracy of the bilinear form and the fact that
  \[
    \gamma''(|a|,|b|) - \gamma''(|a|-|x_1|,|b|-|x_2|) = \gamma''(|x_1|,|b|-|x_2|) + \gamma''(|a|,|x_2|). \qedhere
  \]
\end{proof}

\begin{defin}[Compatible dual pair] \label{def:compatible-pair}
  We say that the $(q,\gamma)$-dual pair $(H^+,H^-)$ is \emph{compatible} if there is a choice of $\chi$ such that
  \begin{equation} \label{eq:compatibility}
    \chi' = - (\gamma')^T
  \end{equation}
  (recall from Remark~\ref{rem:non-unique} that $(q,\chi)$ is not uniquely determined by $H^+$).  Whenever we refer to a compatible dual pair, we will assume that we have chosen such a $(q,\chi)$.  Note that~\eqref{eq:compatibility} is equivalent to the condition that $\gamma'' = -(\xi')^T$.
  \details{$(\xi')^T = \chi' + (\gamma')^T - \gamma'' = -\gamma''$.}
\end{defin}

\begin{rem}
  Since the twisting $(q,\chi)$ of $H^+$ is not unique (see Remark~\ref{rem:non-unique}), the issue of compatibility is rather subtle.  In fact, the authors are not aware of an example of a dual pair that is not compatible.
\end{rem}

For the remainder of this section, we assume that the dual pair $(H^+,H^-)$ is compatible and that $\chi$ satisfies~\eqref{eq:compatibility}.

\begin{cor} \label{cor:compatible-commutation}
  For all $x \in H^-$ and $a \in H^+$, we have
  \[ \ts
    \pR{x}^* \pL{a} = \sum_{(x)} q^{\gamma''(|a|,|x_2|)+\xi''(|a|-|x_1|,|x_2|)} \pL{\left(\pR{x_1}^*(a)\right)} \pR{x_2}^*.
  \]
\end{cor}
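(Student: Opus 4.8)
The plan is to recognize that this corollary is simply the operator-theoretic reformulation of Lemma~\ref{lem:adjoint-action-on-product}, once the compatibility hypothesis is used to simplify the scalar coefficient. Both sides are elements of $\End H^+$, so it suffices to verify the identity after evaluating at an arbitrary homogeneous element $b \in H^+$. For the left-hand side, since $\pL{a}$ is left multiplication by $a$ and operators compose, we have $\pR{x}^* \pL{a}(b) = \pR{x}^*(ab)$, which we expand directly using Lemma~\ref{lem:adjoint-action-on-product}.

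Next I would match this expansion against the right-hand side. Evaluating the proposed operator at $b$ gives $\pL{(\pR{x_1}^*(a))} \pR{x_2}^*(b) = \pR{x_1}^*(a)\, \pR{x_2}^*(b)$, which is exactly the product appearing in Lemma~\ref{lem:adjoint-action-on-product}. Thus the two sides agree term by term over the Sweedler components of $x$, and the only remaining point is that the scalar exponent produced by the lemma, namely $\gamma''(|x_1|,|b|-|x_2|)+\gamma''(|a|,|x_2|) + \xi'(|b|-|x_2|,|x_1|)+\xi''(|a|-|x_1|,|x_2|)$, must reduce to the exponent $\gamma''(|a|,|x_2|)+\xi''(|a|-|x_1|,|x_2|)$ stated in the corollary.

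The heart of the argument, and the step where I expect to need the compatibility hypothesis, is the cancellation of the two leftover terms $\gamma''(|x_1|,|b|-|x_2|)$ and $\xi'(|b|-|x_2|,|x_1|)$. By Definition~\ref{def:compatible-pair}, the condition~\eqref{eq:compatibility} is equivalent to $\gamma'' = -(\xi')^T$, so that $\gamma''(|x_1|,|b|-|x_2|) = -\xi'(|b|-|x_2|,|x_1|)$ and these two terms sum to zero. I would emphasize that this cancellation is not merely cosmetic: it is precisely what eliminates all dependence on $|b|$ from the scalar, and that $b$-independence is exactly what is needed for the right-hand side to define a single element of $\End H^+$ (rather than a scalar that varies with the homogeneous component on which the operator acts). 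In this sense the corollary records the fact that compatibility makes the left regular action commute past left multiplication through a twist independent of the target. With the cancellation established, the identity holds after evaluation at every $b \in H^+$, hence as an identity of operators, completing the proof.
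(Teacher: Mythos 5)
Your proof is correct and follows essentially the same route as the paper: the paper also deduces the corollary directly from Lemma~\ref{lem:adjoint-action-on-product}, with the compatibility condition in the form $\gamma'' = -(\xi')^T$ cancelling the two $b$-dependent terms in the exponent. Your explicit remark that this cancellation is exactly what makes the coefficient independent of $b$ (so the right-hand side is a well-defined single operator) is the point the paper leaves implicit.
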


\begin{proof}
  For $a, b \in H^+$ and $x \in H^-$, we have, by Lemma~\ref{lem:adjoint-action-on-product},
  \begin{equation}\label{eq:compatible-action} \ts
    \pR{x}^*(ab) = \sum_{(x)} q^{\gamma''(|a|,|x_2|)+\xi''(|a|-|x_1|,|x_2|)} \pR{x_1}^*(a) \pR{x_2}^*(b).
  \end{equation}
  and the result follows.
\end{proof}

We see from Corollary~\ref{cor:compatible-commutation} that the compatibility of the dual pair ensures that, in the sum~\eqref{eq:compatible-action}, the coefficients are independent of $b$.  Hence we obtain a nice description of how the operators of left multiplication by $H^+$ and the left regular action of $H^-$ commute.

\begin{defin}[Twisted Heisenberg double] \label{def:h}
  We define the \emph{twisted Heisenberg double}\linebreak $\fh(H^+,H^-)$ as follows.  We set $\fh(H^+,H^-) = H^+ \otimes H^-$ as $\kk$-modules, and we write $a \# x$ for $a \otimes x$, $a \in H^+$, $x \in H^-$, viewed as an element of $\fh(H^+,H^-)$.  Multiplication is given by
  \begin{align}\label{eq:smash-product} \ts
    (a \# x)(b \# y) &:= \ts \sum_{(x)} q^{\gamma''(|b|,|x_2|) + \xi''(|b|-|x_1|,|x_2|)} a \pR{x_1}^*(b) \# x_2 y \\
    \notag &= \ts \sum_{(x),(b)} q^{\gamma''(|b|,|x_2|) + \xi''(|b|-|x_1|,|x_2|) + \gamma'(|b_1|,|b_2|)} \langle x_1, b_2 \rangle ab_1 \# x_2 y.
  \end{align}
  We will often view $H^+$ and $H^-$ as subalgebras of $\fh(H^+,H^-)$ via the maps $a \mapsto a \# 1$ and $x \mapsto 1 \# x$ for $a \in H^+$ and $x \in H^-$.  Then we have $ax = a \# x$.  When the context is clear, we will simply write $\fh$ for $\fh(H^+,H^-)$.  We have a natural grading $\fh = \bigoplus_{\lambda \in G(\Lambda)} \fh_\lambda$, where $\fh_\lambda = \bigoplus_{\mu - \nu = \lambda} H_\mu^+ \# H_\nu^-$.
\end{defin}

\begin{rem}
  \begin{enumerate}
    \item The associativity of the product~\eqref{eq:smash-product} can be shown directly.  However, since it will also follow from the fact that $\fh(H^+,H^-)$ is isomorphic to a subalgebra of $\End H^+$ (see Remark~\ref{rem:h-subalg-End-H}), we omit a direct proof, which is somewhat lengthy.

    \item If the twisted pairing is nondegenerate but not a perfect pairing (which can only happen if $\kk$ is not a field), one can still define an algebra as in Definition~\ref{def:h}.  This algebra will be a subalgebra of the twisted Heisenberg double of $H^+$, since $H^-$ will be isomorphic to a Hopf subalgebra of the Hopf algebra dual to $H^+$.
  \end{enumerate}
\end{rem}

%
\section{Fock space} \label{sec:Fock-space}
%

We now introduce a natural representation of the twisted Heisenberg double.  Throughout this section we assume that $(H^+,H^-)$ is a compatible $(q,\gamma)$-dual pair of twisted Hopf algebras, that $H^+$ is a $(q,\chi)$-Hopf algebra, and that $H^-$ is $(q,\xi)$-Hopf algebra with $\xi$ given by~\eqref{eq:xi-dual}.  We let $\fh = \fh(H^+,H^-)$.

\begin{defin}[Vacuum vector]
  An element $v$ of an $\fh$-module $V$ is called a \emph{lowest weight} (resp.\ \emph{highest weight}) \emph{vacuum vector} if $\kk v \cong \kk$, as $\kk$-modules, and $H^-_\lambda v = 0$ (resp.\ $H^+_\lambda v = 0$) for all $\lambda \ne 0$.
\end{defin}

\begin{defin}[Fock space]
  The algebra $\fh$ has a natural (left) representation on $H^+$ given by
  \[
    (a \# x)(b) = a \pR{x}^*(b),\quad a,b \in H^+,\ x \in H^-.
  \]
  We call this the \emph{lowest weight Fock space representation} of $\fh$ and denote it by $\cF = \cF(H^+,H^-)$.  Note that this representation is generated by the lowest weight vacuum vector $1 \in H^+$.
\end{defin}

Suppose $X^+$ is a $\Lambda$-graded subalgebra of $H^+$ that is invariant under the left regular action of $H^-$ on $H^+$.  Then $X^+ \# H^-$ is a subalgebra of $\fh$ acting naturally on $X^+$.  The following result (when $X^+=H^+$) is a generalization of the Stone--von Neumann Theorem to the setting of an arbitrary twisted Heisenberg double.  In the untwisted setting, it was proved in~\cite[Th.~2.11]{SY13}.

\begin{theo} \label{theo:Fock-space-properties}
  Let $X^+$ be a subalgebra of $H^+$ that is invariant under the left regular action of $H^-$ on $H^+$.
  \begin{enumerate}
    \item \label{theo-item:Fock-space-subreps} The only $(X^+ \# H^-)$-submodules of $X^+$ are those of the form $I X^+$ for some ideal $I$ of $\kk$.

    \item Let $\kk^- \cong \kk$ (isomorphism of $\kk$-modules) be the representation of $H^-$ such that $H^-_\lambda$ acts as zero for all $\lambda \neq 0$ and $H^-_0 \cong \kk$ acts by left multiplication.  Then $X^+$ is isomorphic to the induced module $\Ind^{X^+ \# H^-}_{H^-} \kk^- = (X^+ \# H^-) \otimes_{H^-} \kk^-$ as an $(X^+ \# H^-)$-module.

    \item \label{theo-item:Stone-von-Neumann} Any $(X^+ \# H^-)$-module generated by a lowest weight vacuum vector is isomorphic to $X^+$.
  \suspend{enumerate}
  If $X^+=H^+$ then $X^+ \# H^- = \fh$ and the module $X^+$ is the lowest weight Fock space $\cF$.  In that case we also have the following.
  \resume{enumerate}
    \item \label{theo-item:Fock-space-faithful} The lowest weight Fock space representation $\cF$ of $\fh$ is faithful.
  \end{enumerate}
\end{theo}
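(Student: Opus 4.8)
The plan is to treat the four assertions in turn, with parts~\ref{theo-item:Fock-space-subreps}--\ref{theo-item:Stone-von-Neumann} resting on a single computation and part~\ref{theo-item:Fock-space-faithful} handled separately. The computation I would isolate first is the ``lowering'' behaviour of the left regular action: from the formula $\pR{x}^*(a) = \sum_{(a)} q^{\gamma'(|a_1|,|a_2|)} \langle x, a_2 \rangle a_1$ together with the fact that the pairing vanishes off the diagonal of the grading, one reads off that for $x \in H^-_\mu$ and $a \in H^+_\lambda$ the image $\pR{x}^*(a)$ lies in $H^+_{\lambda - \mu}$ (and is zero unless $\mu \le \lambda$ in $\Lambda$), and moreover that $\pR{x}^*(a) = \langle x, a \rangle 1$ when $\mu = \lambda$, since only the Sweedler term $1 \otimes a$ survives and $\gamma'(0,\lambda) = 0$. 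This says precisely that $H^-$ acts by strictly lowering degree, and that in top degree the action is the perfect pairing landing in $\kk 1$.

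For part~\ref{theo-item:Fock-space-subreps}, let $M \subseteq X^+$ be a submodule and set $I = \{c \in \kk : c1 \in M\}$, an ideal of $\kk$. The inclusion $IX^+ \subseteq M$ is immediate since $X^+$ acts by left multiplication, so the content is $M \subseteq IX^+$, which I would prove by induction on the (finite) support of an element $m \in M$ with respect to the grading. Choosing $\lambda$ maximal in the support, the lowering computation shows that $\pR{x}^*(m) = \langle x, m_\lambda \rangle 1 \in M$ for every $x \in H^-_\lambda$, since all higher-degree contributions vanish by maximality. Expanding $m_\lambda$ in a homogeneous $\kk$-basis of $X^+_\lambda$ and feeding in the elements of $H^-_\lambda$ dual to that basis under the perfect pairing, one extracts that each coefficient of $m_\lambda$ lies in $I$; hence $m_\lambda \in I X^+$ and also $m_\lambda \in M$, so $m - m_\lambda \in M$ has strictly smaller support and induction finishes the argument. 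The delicate point is the coefficient extraction: producing elements of $H^-_\lambda$ dual to a basis of $X^+_\lambda$ requires the perfect pairing on $H^-_\lambda \times H^+_\lambda$ to restrict to something nondegenerate on $X^+_\lambda$, which is automatic when $X^+ = H^+$ (and over a field in general). I expect this to be the main obstacle in the general $X^+$ statement.

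For part~(b) I would avoid re-using part~\ref{theo-item:Fock-space-subreps} and instead exhibit the isomorphism directly. Specializing the product~\eqref{eq:smash-product} to $b = 1$ and using the counit axiom gives $(a \# x)(1 \# y) = a \# xy$, so that $X^+ \# H^-$ is isomorphic to $X^+ \otimes H^-$ as a right $H^-$-module, free on a homogeneous basis of $X^+$. Consequently $\Ind^{X^+ \# H^-}_{H^-} \kk^- = (X^+ \# H^-) \otimes_{H^-} \kk^- \cong X^+ \otimes_\kk (H^- \otimes_{H^-} \kk^-) \cong X^+$ as $\kk$-modules, and one checks that the left $(X^+ \# H^-)$-action transported across this identification is exactly the Fock action $(a \# x)(b) = a \pR{x}^*(b)$, using again that $\pR{x}^*(1) = \varepsilon(x)1$. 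Part~(c) is then formal: a module $V$ generated by a lowest weight vacuum vector $v$ receives a surjection from $\Ind^{X^+ \# H^-}_{H^-} \kk^- \cong X^+$ sending $1 \mapsto v$ (the map $\kk^- \to V$, $c \mapsto cv$, is $H^-$-linear by the vacuum condition), its kernel, being a submodule of $X^+$, has the form $IX^+$ by part~\ref{theo-item:Fock-space-subreps}, and $\kk v \cong \kk$ forces $I = 0$, so $V \cong X^+$.

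Finally, for part~\ref{theo-item:Fock-space-faithful} (where $X^+ = H^+$), suppose $z \in \fh$ acts as $0$ on $\cF = H^+$. Since the Fock action is $G(\Lambda)$-graded I may assume $z$ is homogeneous, and I would write $z = \sum_j a_j \# x_j$ with the $x_j$ distinct elements of a fixed homogeneous $\kk$-basis of $H^-$ and each $a_j \neq 0$. Let $\nu_0$ be minimal among the $|x_j|$; evaluating $z$ on $b \in H^+_{\nu_0}$ kills every term with $|x_j| \ne \nu_0$ by the lowering computation and leaves $\sum_{|x_j| = \nu_0} \langle x_j, b \rangle a_j = 0$. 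Because the $x_j$ of degree $\nu_0$ form part of a basis of $H^-_{\nu_0}$, the perfect pairing supplies elements of $H^+_{\nu_0}$ dual to them; evaluating on these forces each corresponding $a_j$ to vanish, a contradiction. Hence $z = 0$ and $\cF$ is faithful. Note that here, unlike in part~\ref{theo-item:Fock-space-subreps}, the dual elements genuinely exist because the $x_j$ are basis vectors of $H^-_{\nu_0}$, so no purity hypothesis is needed.
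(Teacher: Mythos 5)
Your proof is correct and takes essentially the same approach as the paper, whose proof is simply that of \cite[Th.~2.11]{SY13} run with the partial order $\lambda \leq \mu \iff \exists\, \nu \in \Lambda,\ \mu = \lambda + \nu$: your degree-lowering observation for $\pR{x}^*$, the ideal $I = \{c \in \kk : c1 \in M\}$ with induction on a maximal degree in the support, the right-$H^-$-freeness of $X^+ \# H^-$ for part (b), Frobenius reciprocity plus part (a) for part (c), and the minimal-degree/dual-basis argument for faithfulness are precisely the ingredients used there. The one delicate point you flag in part (a) --- producing elements of $H^-_\lambda$ dual to a basis of $X^+_\lambda$ --- is equally present in the paper's own argument (which extends a basis of $X^+_\mu$ to a basis of $H^+_\mu$ and takes the dual basis in $H^-_\mu$), so your proposal matches the paper's level of rigor on that issue.
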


\begin{proof}
  The proof is the same as that of \cite[Th.~2.11]{SY13} except that, in the proof of part~\eqref{theo-item:Fock-space-subreps}, we use the partial order on $\Lambda$ defined by
  \[
    \lambda\leq\mu \iff \exists\ \nu \in \Lambda \text{ such that } \mu=\lambda+\nu.
   \]
  This partial order generalizes the usual order on the natural numbers used in the proof of \cite[Th.~2.11]{SY13}.
  \details{
    Details for part~\eqref{theo-item:Fock-space-subreps} are as follows.  Clearly, if $I$ is an ideal of $\kk$, then $I X^+$ is a submodule of $X^+$.  Now suppose $W \subseteq X^+$ is a nonzero submodule, and let
    \[
      I = \{c \in \kk\ |\ c1 \in W\}.
    \]
    It is easy to see that $I$ is an ideal of $\kk$.  We claim that $W = I X^+$.  Since the element $1$ generates $X^+$, we clearly have $I X^+ \subseteq W$.  Now suppose there exists $a \in W$ such that $a \not \in I X^+$.  Consider the partial order on $\Lambda$ defined above. Write $a=\sum_{\lambda\in\Lambda} a_\lambda$ and let $\mu \in \Lambda$ be maximal such that $a_\mu\neq 0$.  We may assume that $a_\mu\not \in I X^+$ (otherwise, consider $a-a_\mu$).  Let $b_1,\dotsc,b_m$ be a basis of $H_\mu^+$ such that $b_1,\dotsc,b_k$ is a basis of $X_\mu^+$, for $k = \dim_{\kk} X_\mu^+$.  Let $x_1,\dotsc,x_m$ be the dual basis of $H_\mu^-$.  Then it is easy to verify that $\sum_{j=1}^k b_j \# x_j$ acts as the identity on $X_\mu^+$ and as zero on $X_\lambda^+$ if $\lambda\not > \mu$. Thus,
    \[ \ts
      a_\mu = \sum_{j=1}^k (b_j \# x_j)(a) \in \sum_{j=1}^k b_j I X^+ \subseteq I X^+,
    \]
    since $\pR{x_j}^*(a) \in W \cap H_0^+$ for all $j=1,\dotsc,k$.  This contradiction completes the proof.
  }
\end{proof}

\begin{rem} \label{rem:h-subalg-End-H}
  By Theorem~\ref{theo:Fock-space-properties}\eqref{theo-item:Fock-space-faithful}, we may view $\fh$ as the subalgebra of $\End H^+$ generated by $\pL{a}$, $a \in H^+$, and $\pR{x}^*$, $x \in H^-$.
\end{rem}

%
\section{Shifting the product and coproduct} \label{sec:shift}
%

In categorification via towers of algebras, where products and coproducts come from induction and restriction functors between categories of graded modules, it is common to introduce a grading shift in order to obtain categorifications of specific relations.  In this section, we examine how shifting the product and coproduct of a twisted Hopf algebra changes the constructions introduced above.  In particular, we will see in Theorem~\ref{theo:alpha-invariant} that certain shifts do not change the twisted Heisenberg double $\fh(H^+,H^-)$, even though they change the Hopf algebras $H^+$ and $H^-$.  Throughout this section, we fix an invertible element $q$ of $\kk$.

For a $\Lambda$-graded $\kk$-module $M$ and $\lambda \in \Lambda$, let $P_\lambda$ denote projection onto the summand of degree $\lambda$:
\[ \ts
  P_\lambda \colon \bigoplus_{\mu \in \Lambda} M_\mu \to M_\lambda,\quad P_\lambda((m_\mu)_\mu) = m_\lambda.
\]
If $\Delta$ is a coproduct and $\alpha \colon \Lambda \times \Lambda \to \Z$ is a biadditive map, define the \emph{shifted coproduct}
\begin{equation} \ts
  \Delta_\alpha = \left( \sum_{\lambda,\mu \in \Lambda} q^{\alpha(\lambda,\mu)} P_\lambda \otimes P_\mu \right) \circ \Delta.
\end{equation}
Similarly, if $\nabla$ is a product and $\beta \colon \Lambda \times \Lambda \to \Z$ is a biadditive map, define the \emph{shifted product}
\begin{equation} \ts
  \nabla_\beta = \nabla \circ \left( \sum_{\lambda,\mu \in \Lambda} q^{\beta(\lambda,\mu)} P_\lambda \otimes P_\mu \right).
\end{equation}

\begin{prop} \label{prop:shift-Hopf-twisting}
  If $(H,\nabla,\Delta,\varepsilon,\eta)$ is a $(q,\chi)$-bialgebra and $\alpha,\beta \colon \Lambda \times \Lambda \to \Z$ are biadditive maps, then $(H,\nabla_\beta,\Delta_\alpha,\varepsilon,\eta)$ is a $(q,(\chi' + \alpha^T + \beta, \chi'' + \alpha + \beta))$-bialgebra.
\end{prop}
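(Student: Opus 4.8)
The plan is to verify directly the defining conditions of a $(q,\tilde\chi)$-bialgebra for $(H,\nabla_\beta,\Delta_\alpha,\varepsilon,\eta)$, where $\tilde\chi = (\chi' + \alpha^T + \beta,\ \chi'' + \alpha + \beta)$. First I would record the effect of the shifts on homogeneous elements: for homogeneous $a,b$,
\[
  \nabla_\beta(a \otimes b) = q^{\beta(|a|,|b|)}\, ab, \qquad \Delta_\alpha(a) = \sum_{(a)} q^{\alpha(|a_1|,|a_2|)}\, a_1 \otimes a_2,
\]
where the grading of the coproduct lets us take each $a_1 \otimes a_2$ homogeneous. Because the shifts only rescale by powers of $q$ and preserve degrees, the structural axioms are routine: associativity of $\nabla_\beta$ and coassociativity of $\Delta_\alpha$ follow from the biadditivity of $\beta$ and $\alpha$ (the two bracketings give the same total exponent after expanding $\beta(|a|+|b|,|c|)$, resp.\ the dual identity); the unit $\eta$ and counit $\varepsilon$ are unchanged because $\beta$ and $\alpha$ vanish whenever an argument is $0$ and $\varepsilon$ annihilates all positive degrees; and $H_0 = \kk 1_H$ is a property of the unchanged grading. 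This leaves the one essential point.

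The crux is that $\Delta_\alpha \colon H \to (H \otimes H)_{\tilde\chi}$ is an algebra homomorphism, where---and this is the subtle point---the twisted tensor algebra $(H \otimes H)_{\tilde\chi}$ is now built from the shifted product $\nabla_\beta$ on each factor, so that
\[
  (a_1 \otimes a_2) *_{\tilde\chi} (b_1 \otimes b_2) = q^{\tilde\chi'(|a_2|,|b_1|) + \tilde\chi''(|a_1|,|b_2|) + \beta(|a_1|,|b_1|) + \beta(|a_2|,|b_2|)}\, a_1 b_1 \otimes a_2 b_2,
\]
the extra $\beta$-exponents coming precisely from the two internal applications of $\nabla_\beta$. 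I would then compute both sides of $\Delta_\alpha(\nabla_\beta(a \otimes b)) = \Delta_\alpha(a) *_{\tilde\chi} \Delta_\alpha(b)$ on homogeneous $a,b$. For the left side I would apply the original bialgebra property $\Delta(ab) = \Delta(a) *_\chi \Delta(b)$ and then insert the $\alpha$-shift (using $|a_1 b_1| = |a_1| + |b_1|$), obtaining on each term $a_1 b_1 \otimes a_2 b_2$ the single exponent $\beta(|a|,|b|) + \chi'(|a_2|,|b_1|) + \chi''(|a_1|,|b_2|) + \alpha(|a_1|+|b_1|,|a_2|+|b_2|)$. For the right side I would expand the two $\alpha$-shifts on $\Delta_\alpha(a)$ and $\Delta_\alpha(b)$ together with the $*_{\tilde\chi}$ formula above.

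The remaining step, which is the main (though purely combinatorial) obstacle, is to check that these two $q$-exponents agree term by term. Writing $(p,p',r,r') = (|a_1|,|a_2|,|b_1|,|b_2|)$ and expanding $\beta(p+p',r+r')$ and $\alpha(p+r,p'+r')$ by biadditivity, the left exponent becomes a sum of four $\beta$-terms, four $\alpha$-terms, and the two $\chi$-terms $\chi'(p',r)$ and $\chi''(p,r')$. Substituting $\tilde\chi'(p',r) = \chi'(p',r) + \alpha(r,p') + \beta(p',r)$ and $\tilde\chi''(p,r') = \chi''(p,r') + \alpha(p,r') + \beta(p,r')$ into the right exponent, one sees the $\alpha^T$ contribution in $\tilde\chi'$ supplies exactly the cross term $\alpha(r,p')$ and the $\alpha$ contribution in $\tilde\chi''$ supplies $\alpha(p,r')$, so the four $\alpha$-terms match; the $\beta$-terms contributed by $\tilde\chi'$, $\tilde\chi''$, and the two copies of $\nabla_\beta$ together reproduce the same four $\beta$-terms; and the $\chi$-terms coincide. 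Hence the exponents are equal, $\Delta_\alpha$ is the required algebra map, and the proposition follows. I expect the only genuine difficulty to be bookkeeping---in particular remembering that $*_{\tilde\chi}$ carries the $\beta$-shift internally, which is exactly what forces $\beta$ to appear in both components of $\tilde\chi$ and $\alpha^T$ (rather than $\alpha$) in the first.
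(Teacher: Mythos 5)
Your proposal is correct and follows essentially the same route as the paper's proof: routine checks of (co)associativity and the unit/counit axioms via biadditivity, followed by the core computation matching $q$-exponents in $\Delta_\alpha\nabla_\beta(a\otimes b) = \Delta_\alpha(a) *_{\tilde\chi} \Delta_\alpha(b)$, including the key observation that $*_{\tilde\chi}$ uses $\nabla_\beta$ internally so that the factors $q^{\beta(|a_1|,|b_1|)}$ and $q^{\beta(|a_2|,|b_2|)}$ are absorbed into the componentwise products. Your expansion $\beta(|a|,|b|) = \beta(|a_1|,|b_1|) + \beta(|a_1|,|b_2|) + \beta(|a_2|,|b_1|) + \beta(|a_2|,|b_2|)$ is in fact the correct form of the identity the paper invokes (whose displayed version contains a typographical slip), so your bookkeeping is exactly the intended argument.
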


\begin{proof}
  Since $\nabla$ is associative, we have $\nabla(\nabla \otimes \id) = \nabla(\id \otimes \nabla)$.  Thus,
  \begin{align*}
    \nabla_\beta (\nabla_\beta \otimes \id) &= \ts \sum_{\lambda,\mu,\nu,\rho \in \Lambda} q^{\beta(\lambda,\mu) + \beta(\nu,\rho)} \nabla (P_\nu \otimes P_\rho) (\nabla \otimes \id) (P_\lambda \otimes P_\mu \otimes \id) \\
    &= \ts \sum_{\lambda,\mu,\rho \in \Lambda} q^{\beta(\lambda,\mu) + \beta(\lambda + \mu,\rho)} \nabla (\nabla \otimes \id) (P_\lambda \otimes P_\mu \otimes P_\rho) \\
    &= \ts \sum_{\lambda,\mu,\rho \in \Lambda} q^{\beta(\lambda,\mu+\rho) + \beta(\mu,\rho)} \nabla (\id \otimes \nabla) (P_\lambda \otimes P_\mu \otimes P_\rho) \\
    &= \ts \nabla_\beta (\id \otimes \nabla_\beta).
  \end{align*}
  Hence $\nabla_\beta$ is associative.  The proof that $\Delta_\alpha$ is coassociative is analogous.
  \details{
    Since $\Delta$ is coassociative, we have $(\id \otimes \Delta)\Delta = (\Delta \otimes \id)\Delta$.  Thus,
    \begin{align*}
      (\id \otimes \Delta_\alpha) \Delta_\alpha
      &= \ts \sum_{\lambda,\mu,\nu,\rho \in \Lambda} q^{\alpha(\lambda,\mu) + \alpha(\nu,\rho)} (\id \otimes P_\lambda \otimes P_\mu) (\id \otimes \Delta) (P_\nu \otimes P_\rho) \Delta \\
      &= \ts \sum_{\lambda,\mu,\nu \in \Lambda} q^{\alpha(\lambda,\mu) + \alpha(\nu,\lambda+\mu)} (P_\nu \otimes P_\lambda \otimes P_\mu) (\id \otimes \Delta) \Delta \\
      &= \ts \sum_{\lambda,\mu,\nu \in \Lambda} q^{\alpha(\lambda+\nu,\mu) + \alpha(\nu,\lambda)} (P_\nu \otimes P_\lambda \otimes P_\mu) (\Delta \otimes \id) \Delta \\
      &= (\Delta_\alpha \otimes \id) \Delta_\alpha.
    \end{align*}
    Hence $\Delta_\alpha$ is coassociative.
  }

  For the remainder of this proof, juxtaposition corresponds to the multiplication $\nabla$.  For homogeneous elements $a,b \in H$, we have
  \begin{align*}
    \Delta_\alpha& \nabla_\beta(a \otimes b) = \ts \left( \sum_{\lambda,\mu \in \Lambda} q^{\alpha(\lambda,\mu)} P_\lambda \otimes P_\mu \right) \Delta \nabla \left( \sum_{\lambda,\mu \in \Lambda} q^{\beta(\lambda,\mu)} P_\lambda \otimes P_\mu \right) (a \otimes b) \\
    &= \ts q^{\beta(|a|,|b|)} \left( \sum_{\lambda,\mu \in \Lambda} q^{\alpha(\lambda,\mu)} P_\lambda \otimes P_\mu \right) \Delta(a) *_\chi \Delta(b) \\
    &= \ts q^{\beta(|a|,|b|)} \left( \sum_{\lambda,\mu \in \Lambda} q^{\alpha(\lambda,\mu)} P_\lambda \otimes P_\mu \right) \sum_{(a),(b)} q^{\chi'(|a_2|,|b_1|) + \chi''(|a_1|,|b_2|)} a_1 b_1 \otimes a_2 b_2 \\
    &= \ts q^{\beta(|a|,|b|)} \sum_{(a),(b)} q^{\alpha(|a_1|+|b_1|,|a_2|+|b_2|) + \chi'(|a_2|,|b_1|) + \chi''(|a_1|,|b_2|)} a_1 b_1 \otimes a_2 b_2 \\
    &= \ts q^{\beta(|a|,|b|)} \sum_{(a),(b)} q^{\chi'(|a_2|,|b_1|) + \chi''(|a_1|,|b_2|) + \alpha(|a_1|,|b_2|) + \alpha(|b_1|,|a_2|)} \left( q^{\alpha(|a_1|,|a_2|)} a_1 \otimes a_2 \right) \left( q^{\alpha(|b_1|,|b_2|)} b_1 \otimes b_2 \right) \\
    &= \Delta_\alpha(a) *_{(\chi' + \alpha^T + \beta, \chi'' + \alpha + \beta)} \Delta_\alpha(b),
  \end{align*}
  where, in the last equality, we use the fact that
  \[
    q^{\beta(|a|,|b|)} = q^{\beta(|a_1|,|a_2|)} q^{\beta(|b_1|,|b_2|)} q^{\beta(|a_1|,|b_2|)} q^{\beta(|a_2|,|b_1|)},
  \]
  and the factors $q^{\beta(|a_1|,|b_1|)}$ and $q^{\beta(|a_2|,|b_2|)}$ are absorbed into the products, under $\nabla_\beta$, of $a_1$, $b_1$ and $a_2$, $b_2$.

  Using the fact that any biadditive map $\Lambda \times \Lambda \to \Z$ takes the value zero when either of the arguments is equal to zero, it is straightforward to verify that the remaining axioms of a twisted bialgebra, which involve the unit and counit, are satisfied.
\end{proof}

\begin{lem} \label{lem:shift-pairing-twisting}
  Suppose $(H^+,H^-)$ is a $(q,\gamma)$-dual pair of twisted Hopf algebras.  Let $\tilde H^\pm$ be obtained from $H^\pm$ by replacing the coproduct of $H^\pm$ by $\Delta_{\alpha^\pm}$ for biadditive maps $\alpha^\pm \colon \Lambda \times \Lambda \to \Z$ and the product by $\nabla_{\beta^\pm}$ for biadditive maps $\beta^\pm \colon \Lambda \times \Lambda \to \Z$.  Then $(\tilde H^+, \tilde H^-)$ is a $(q,(\gamma' -\alpha^+ +\beta^-,\gamma'' -\alpha^- + \beta^+))$-dual pair of twisted Hopf algebras.
\end{lem}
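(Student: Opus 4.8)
The plan is to show that the \emph{same} bilinear form $\langle -,- \rangle \colon H^- \times H^+ \to \kk$ serves as a $(q,\tilde\gamma)$-twisted pairing for the shifted Hopf structures, where $\tilde\gamma = (\gamma' - \alpha^+ + \beta^-,\, \gamma'' - \alpha^- + \beta^+)$. First I would observe that $\tilde H^+$ and $\tilde H^-$ are themselves twisted Hopf algebras: by Proposition~\ref{prop:shift-Hopf-twisting} the shifted data $(H^\pm, \nabla_{\beta^\pm}, \Delta_{\alpha^\pm}, \varepsilon, \eta)$ is again a $(q,\cdot)$-bialgebra, and by the remark following Definition~\ref{Lambda-grad} every $\Lambda$-graded connected twisted bialgebra admits an antipode. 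The underlying grading, connectedness, unit, and counit are untouched by shifting, so the grading and unit/counit axioms of Definition~\ref{def:hopf-pairing} hold for $\langle -,-\rangle$ automatically, and only the two multiplicativity identities need to be checked.

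The key observation, which makes both verifications routine, is that the pairing vanishes off the diagonal: $\langle -,-\rangle|_{H^-_\lambda \times H^+_\mu} \equiv 0$ for $\lambda \neq \mu$. Consequently, when a shifted coproduct is paired against a tensor product of fixed homogeneous elements, the degree-dependent shift factor reduces to a single scalar that can be pulled outside the sum. Concretely, for the first identity I would compute, for homogeneous $x,y \in H^-$ and $a \in H^+$,
\[
  \langle \nabla_{\beta^-}(x \otimes y), a \rangle = q^{\beta^-(|x|,|y|)} \langle xy, a \rangle = q^{\beta^-(|x|,|y|) + \gamma'(|x|,|y|)} \langle x \otimes y, \Delta(a) \rangle,
\]
using the original $(q,\gamma)$-twisted pairing. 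On the other side,
\[
  \langle x \otimes y, \Delta_{\alpha^+}(a) \rangle = \sum_{(a)} q^{\alpha^+(|a_1|,|a_2|)} \langle x, a_1 \rangle \langle y, a_2 \rangle,
\]
and only terms with $|a_1| = |x|$, $|a_2| = |y|$ survive, so $q^{\alpha^+(|a_1|,|a_2|)}$ equals the constant $q^{\alpha^+(|x|,|y|)}$ and the right side becomes $q^{\alpha^+(|x|,|y|)} \langle x \otimes y, \Delta(a) \rangle$. Comparing the two displays yields exactly $\langle \nabla_{\beta^-}(x \otimes y), a \rangle = q^{\tilde\gamma'(|x|,|y|)} \langle x \otimes y, \Delta_{\alpha^+}(a) \rangle$ with $\tilde\gamma' = \gamma' - \alpha^+ + \beta^-$.

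The second identity is handled symmetrically, now using the shifted product $\nabla_{\beta^+}$ on $\tilde H^+$ and the shifted coproduct $\Delta_{\alpha^-}$ on $\tilde H^-$; the same off-diagonal vanishing turns the $\alpha^-$-shift into the constant $q^{\alpha^-(|a|,|b|)}$ and produces $\tilde\gamma'' = \gamma'' - \alpha^- + \beta^+$. Since the form itself is unchanged, its restriction to $H^-_\lambda \times H^+_\lambda$ remains a perfect pairing, so by Definition~\ref{def:dual-pair} the triple $(\tilde H^+, \tilde H^-, \langle -,-\rangle)$ is a $(q,\tilde\gamma)$-dual pair. I do not anticipate any genuine obstacle: the only point requiring care is the bookkeeping of which shift attaches to which identity—namely that the $H^-$-product shift $\beta^-$ and the $H^+$-coproduct shift $\alpha^+$ enter $\tilde\gamma'$, while the $H^+$-product shift $\beta^+$ and the $H^-$-coproduct shift $\alpha^-$ enter $\tilde\gamma''$—together with the opposite signs, which arise because in each identity the product shift and the coproduct shift sit on opposite sides of the equation.
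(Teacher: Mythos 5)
Your proof is correct and takes essentially the same approach as the paper: both verify the two multiplicativity identities of Definition~\ref{def:hopf-pairing} for the unchanged bilinear form, using the off-diagonal vanishing of the pairing to turn the coproduct shift $q^{\alpha^\pm(|a_1|,|a_2|)}$ into a constant scalar (a step the paper's two-line computation leaves implicit). The extra points you spell out---that $\tilde H^\pm$ are twisted Hopf algebras via Proposition~\ref{prop:shift-Hopf-twisting} and connectedness, and that the grading, unit/counit, and perfect-pairing conditions are untouched---are tacit in the paper's proof but add no new difficulty.
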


\begin{proof}
  For homogeneous $x,y \in H^-$ and $a,b \in H^+$, we have
  \begin{multline*}
    \langle \nabla_{\beta^-} (x \otimes y), a \rangle = q^{\beta^-(|x|,|y|)} \langle \nabla(x \otimes y), a \rangle = q^{\gamma'(|x|,|y|) + \beta^-(|x|,|y|)} \langle x \otimes y, \Delta(a) \rangle \\
    = q^{\gamma'(|x|,|y|) - \alpha^+(|x|,|y|) + \beta^-(|x|,|y|)} \langle x \otimes y, \Delta_{\alpha^+}(a) \rangle
  \end{multline*}
  and
  \begin{multline*}
    \langle x, \nabla_{\beta^+}(a \otimes b) \rangle = q^{\beta^+(|a|,|b|)} \langle x, \nabla(a \otimes b) \rangle \\
    = q^{\gamma''(|a|,|b|) + \beta^+(|a|,|b|)} \langle \Delta(x), a \otimes b \rangle = d^{\gamma''(|a|,|b|) - \alpha^-(|a|,|b|) + \beta^+(|a|,|b|)} \langle \Delta_{\alpha^-}(x), a \otimes b \rangle. \qedhere
  \end{multline*}
\end{proof}

\begin{lem}
  Suppose $(H^+,H^-)$ is a compatible dual pair of twisted Hopf algebras and define $\tilde H^\pm$ as in Lemma~\ref{lem:shift-pairing-twisting}.  Then $(\tilde H^+, \tilde H^-)$ is a compatible dual pair if $\beta^+ = -(\beta^-)^T$.
\end{lem}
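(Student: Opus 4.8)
The plan is to assemble the twisting data of the shifted pair from the two results already established and then to verify the compatibility condition of Definition~\ref{def:compatible-pair} by a direct comparison of biadditive maps. Since $(H^+,H^-)$ is compatible, I would begin by fixing the choice of $\chi$ for $H^+$ guaranteed by that definition, so that $\chi' = -(\gamma')^T$, and then track how this datum is transformed by the shift.

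First, I apply Proposition~\ref{prop:shift-Hopf-twisting} to $H^+$: replacing its product and coproduct by $\nabla_{\beta^+}$ and $\Delta_{\alpha^+}$ turns $\tilde H^+$ into a $(q,\tilde\chi)$-Hopf algebra whose first twisting component is $\tilde\chi' = \chi' + (\alpha^+)^T + \beta^+$ (only this component is needed). Next, Lemma~\ref{lem:shift-pairing-twisting} identifies $(\tilde H^+, \tilde H^-)$ as a $(q,\tilde\gamma)$-dual pair with $\tilde\gamma' = \gamma' - \alpha^+ + \beta^-$. To conclude that this pair is compatible, it suffices to exhibit one twisting of $\tilde H^+$ satisfying $\tilde\chi' = -(\tilde\gamma')^T$, and the natural candidate is the $\tilde\chi$ just produced. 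Using linearity of the transpose together with $\chi' = -(\gamma')^T$, I compute
\begin{align*}
  \tilde\chi' &= -(\gamma')^T + (\alpha^+)^T + \beta^+, \\
  -(\tilde\gamma')^T &= -(\gamma')^T + (\alpha^+)^T - (\beta^-)^T,
\end{align*}
so the two expressions coincide precisely when $\beta^+ = -(\beta^-)^T$, which is exactly the hypothesis.

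I do not anticipate a genuine obstacle here, as both the bialgebra-level shift and the pairing-level shift are supplied by the preceding results. The only point requiring care is the non-uniqueness of the twisting noted in Remark~\ref{rem:non-unique}: compatibility asks merely for the \emph{existence} of a suitable $\tilde\chi$, so it is enough to verify the condition for the specific transform of the original compatibility witness, rather than for every admissible twisting of $\tilde H^+$. Because the statement is phrased as an implication, exhibiting this single witness completes the argument.
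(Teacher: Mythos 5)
Your proof is correct and follows essentially the same route as the paper: apply Proposition~\ref{prop:shift-Hopf-twisting} to get $\tilde\chi' = \chi' + (\alpha^+)^T + \beta^+$, apply Lemma~\ref{lem:shift-pairing-twisting} to get $\tilde\gamma' = \gamma' - \alpha^+ + \beta^-$, substitute $\chi' = -(\gamma')^T$, and compare to conclude $\tilde\chi' = -(\tilde\gamma')^T$ exactly when $\beta^+ = -(\beta^-)^T$. Your remark that compatibility only requires exhibiting a single witnessing twisting is also the correct reading of Definition~\ref{def:compatible-pair}.
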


\begin{proof}
  By Proposition~\ref{prop:shift-Hopf-twisting} and Lemma~\ref{lem:shift-pairing-twisting}, we have
  \[
    \tilde \chi' = \chi' + (\alpha^+)^T + \beta^+,\quad \tilde \gamma' = \gamma' - \alpha^+ + \beta^-.
  \]
  Since $(H^+,H^-)$ is a compatible dual pair, we have $\chi' = - (\gamma')^T$.  Thus
  \[
    \tilde \chi' - (\alpha^+)^T - \beta^+ = \chi' = -(\gamma')^T = -(\tilde \gamma')^T - (\alpha^+)^T + (\beta^-)^T
  \]
  and so $\tilde \chi' = - (\tilde \gamma')^T$ if $\beta^+ = -(\beta^-)^T$.
\end{proof}

\begin{theo} \label{theo:alpha-invariant}
  Suppose $(H^+,H^-)$ is a compatible dual pair of twisted Hopf algebras and define $\tilde H^\pm$ as in Lemma~\ref{lem:shift-pairing-twisting}, with $\alpha^+ = \alpha^-$ and $\beta^+=\beta^-=0$.  Then  $\fh(H^+,H^-) \cong \fh(\tilde H^+, \tilde H^-)$ as algebras.
\end{theo}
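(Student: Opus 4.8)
The plan is to show that the \emph{identity} map on the common underlying $\kk$-module $H^+ \otimes H^-$ is an algebra isomorphism $\fh(H^+,H^-) \to \fh(\tilde H^+, \tilde H^-)$. Write $\alpha := \alpha^+ = \alpha^-$. Since $\beta^+ = \beta^- = 0$, the products of $H^+$ and of $H^-$ are not changed in passing to $\tilde H^\pm$; only the coproducts are shifted, $\Delta \mapsto \Delta_\alpha$. Hence $\tilde H^+$ and $\tilde H^-$ agree with $H^+$ and $H^-$ as graded algebras, $\fh$ and $\tilde\fh$ coincide as $\kk$-modules $H^+ \otimes H^-$, and the only thing to verify is that the two smash-product multiplications of Definition~\ref{def:h} agree. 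Note first that by the preceding lemma (the hypothesis $\beta^+ = -(\beta^-)^T$ holding trivially), $(\tilde H^+, \tilde H^-)$ is again a compatible dual pair, so $\fh(\tilde H^+, \tilde H^-)$ is defined.

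Next I would record the shifted twisting data. Lemma~\ref{lem:shift-pairing-twisting} gives $\tilde\gamma'' = \gamma'' - \alpha$, Proposition~\ref{prop:shift-Hopf-twisting} applied to the $(q,\xi)$-Hopf algebra $H^-$ gives $\tilde\xi'' = \xi'' + \alpha$, and by definition $\tilde\Delta(x) = \sum_{(x)} q^{\alpha(|x_1|,|x_2|)} x_1 \otimes x_2$. The key observation is that the left regular action is \emph{unchanged} by the shift. Indeed, the bilinear pairing itself is the same map (only its twisting constants change, as in Lemma~\ref{lem:shift-pairing-twisting}), so in the formula $\pR{x}^*(a) = \sum_{(a)} q^{\gamma'(|a_1|,|a_2|)} \langle x, a_2 \rangle a_1$ the factor $q^{\alpha(|a_1|,|a_2|)}$ coming from $\tilde\Delta$ on $H^+$ exactly cancels against the $-\alpha$ in $\tilde\gamma' = \gamma' - \alpha$, yielding $\widetilde{\pR{x}^*} = \pR{x}^*$.

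Substituting $\tilde\gamma''$, $\tilde\xi''$, $\tilde\Delta$, and $\widetilde{\pR{x_1}^*} = \pR{x_1}^*$ into the smash product for $\tilde\fh$, the $q$-exponent attached to the term $a\,\pR{x_1}^*(b) \# x_2 y$ becomes
\[
  \alpha(|x_1|,|x_2|) + (\gamma'' - \alpha)(|b|,|x_2|) + (\xi'' + \alpha)(|b| - |x_1|,|x_2|),
\]
and expanding the last summand via biadditivity of $\alpha$ shows all $\alpha$-contributions cancel, leaving precisely $\gamma''(|b|,|x_2|) + \xi''(|b|-|x_1|,|x_2|)$, the exponent in the product of $\fh$. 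Thus the two multiplications coincide. A more conceptual packaging of the same computation: once one knows that $\pL{a}$ (product unchanged) and $\pR{x}^* = \widetilde{\pR{x}^*}$ are literally the same operators on $H^+$, Remark~\ref{rem:h-subalg-End-H} identifies both $\fh$ and $\tilde\fh$ with the same subalgebra of $\End H^+$ generated by these operators, and the faithfulness in Theorem~\ref{theo:Fock-space-properties}\eqref{theo-item:Fock-space-faithful} gives the isomorphism at once. I expect no genuine obstacle: the entire content is the exponent cancellation above, which is forced because the shift $\alpha$ enters the coproduct of $\tilde H^\pm$ with the sign opposite to the one with which it enters the pairing twistings $\tilde\gamma'$ and $\tilde\gamma''$.
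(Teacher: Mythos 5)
Your proposal is correct and follows essentially the same route as the paper: compute $\tilde\gamma'' = \gamma'' - \alpha$ and $\tilde\xi'' = \xi'' + \alpha$, account for the factor $q^{\alpha(|x_1|,|x_2|)}$ from the shifted coproduct, and observe that all $\alpha$-contributions cancel in the exponent of the smash product, so the two multiplications coincide under the identity map. Your explicit check that the left regular action $\pR{x}^*$ is unchanged (which the paper uses implicitly) and your appeal to the preceding lemma for compatibility of the shifted pair are worthwhile details, but they do not change the argument.
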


\begin{proof}
  Let $\alpha = \alpha^+ = \alpha^-$.  We have
  \[
    \tilde \gamma'' = \gamma'' - \alpha \quad \text{and} \quad \tilde \xi'' = \tilde \chi'' + \tilde \gamma' - \tilde \gamma'' = \chi'' + \alpha + \gamma' - \alpha - \gamma'' + \alpha = \xi'' + \alpha.
  \]
  Now, for homogeneous $x \in H^-$, if $\Delta(x) = \sum_{(x)} x_1 \otimes x_2$ for homogeneous $x_1,x_2$, then $\Delta_\alpha(x) = \sum_{(x)} q^{\alpha(|x_1|,|x_2|)} x_1 \otimes x_2$.  Thus, the multiplication in $\fh(\tilde H^+, \tilde H^-)$ is given by
  \begin{align*}
    (a \# x)(b \# y) &= \ts \sum_{(x)} q^{\gamma''(|b|,|x_2|) -\alpha(|b|,|x_2|) + \xi''(|b|-|x_1|, |x_2|) + \alpha(|b|-|x_1|,|x_2|) + \alpha(|x_1|, |x_2|)} a \pR{x_1}^*(b) \# x_2y \\
    &= \ts \sum_{(x)} q^{\gamma''(|b|,|x_2|) + \xi''(|b|-|x_1|, |x_2|)} a \pR{x_1}^*(b) \# x_2y,
  \end{align*}
  which is the multiplication in $\fh(H^+,H^-)$.
\end{proof}

For convenience, we summarize here the relations found above, with the notation of Lemma~\ref{lem:shift-pairing-twisting}, and letting $(q,\chi)$ and $(q,\xi)$ be the twistings of $H^+$ and $H^-$, respectively, chosen to satisfy~\eqref{eq:xi-dual}:
\begin{gather}
  \xi' = (\chi')^T + \gamma' - (\gamma'')^T,\quad \xi'' = \chi'' + \gamma' - \gamma'', \\
  \tilde \chi' = \chi' + (\alpha^+)^T + \beta^+,\quad \tilde \chi'' = \chi'' + \alpha^+ + \beta^+, \\
  \tilde \xi' = \xi' + (\alpha^-)^T + \beta^-,\quad \tilde \xi'' = \xi'' + \alpha^- + \beta^-, \\
  \tilde \gamma' = \gamma' - \alpha^+ + \beta^-,\quad \tilde \gamma'' = \gamma'' - \alpha^- + \beta^+.
\end{gather}

%
\section{The quantum Weyl algebra} \label{sec:quantum-Weyl}
%

Let $\kk=\Q(q)$, where $q$ is an indeterminate.  For a positive integer $n$ and nonnegative integer $k$, $0 \le k \le n$, define
\[ \ts
  [n]_q = 1 + q + \dotsb + q^{n-1},\quad [n]_q! = \prod_{i=1}^n [i]_q,\quad \qbin{n}{k}_q = \frac{[n]_q!}{[k]_q! [n-k]_q!},
\]
where, by convention, we set $[0]_q=0$ and $[0]_q!=1$.  We have
\[ \ts
  [n]_{q^{-1}} = q^{-(n-1)}[n]_q,\quad [n]_{q^{-1}}! = q^{-\binom{n}{2}} [n]_q!,\quad \qbin{n}{k}_{q^{-1}} = q^{-k(n-k)} \qbin{n}{k}_q.
\]
Consider $\kk[x]$, which is $\N$-graded by degree. Define
\begin{gather*}
  \nabla \colon \kk[x] \otimes \kk[x] \to \kk[x],\quad \nabla(x^m \otimes x^n) = x^{m+n}, \\ \ts
  \Delta \colon \kk[x] \to \kk[x] \otimes \kk[x],\quad \Delta (x^n) = \sum_{k=0}^n \qbin{n}{k}_q x^k \otimes x^{n-k},
\end{gather*}
extended by linearity.  We let $\varepsilon \colon \kk[x] \to \kk$ and $\eta \colon \kk \to \kk[x]$ be the natural projection and inclusion maps.  Define $\zeta \colon \N \times \N \to \Z$ by $\zeta(m,n) = mn$.  We have
\begin{align*}
  \Delta(x^m) *_{(0,\zeta)} \Delta(x^n)
  &= \ts \left( \sum_{\ell=0}^m \qbin{m}{\ell}_q x^\ell \otimes x^{m-\ell} \right) *_{(0,\zeta)} \left( \sum_{k=0}^n \qbin{n}{k}_q x^k \otimes x^{n-k} \right) \\
  &= \ts \sum_{\ell=0}^m \sum_{k=0}^n \qbin{m}{\ell}_q \qbin{n}{k}_q q^{\ell(n-k)} x^{\ell+k} \otimes x^{m+n-\ell-k} \\
  &= \ts \sum_{j=0}^{m+n} \left(\sum_{\ell + k = j} \qbin{m}{\ell}_q \qbin{n}{k}_q q^{\ell(n-k)} \right) x^j \otimes x^{m+n-j} \\
  &= \ts \sum_{j=0}^{m+n} \qbin{m+n}{j} x^j \otimes x^{m+n-j} \\
  &= \Delta (x^{m+n}),
\end{align*}
where, in the second-to-last equality, we used the $q$-Vandermonde identity.  Thus, $(\kk[x],\nabla,\Delta,\varepsilon,\eta)$ is a $(q,0,\zeta)$-Hopf algebra.  It can similarly be seen to be a $(q,\zeta,0)$-Hopf algebra.

Let $H^+ = \kk[x]$, with twisting $(q,\chi)$, $\chi = (\chi',\chi'') = (0,\zeta)$.  We also let $H^- = \kk[\partial]$, defined as above with $x$ replaced by $\partial$ and $q$ by $q^{-1}$, and with twisting $(q,\xi)$, $\xi = (\xi',\xi'') = (-\zeta, 0)$.  Define the $\kk$-bilinear form
\[
  \langle -, - \rangle \colon H^- \otimes H^+ \to \kk,\quad \langle \partial^m, x^n \rangle = \delta_{mn} [n]_q!.
\]
Then
\begin{align*}
  \langle \partial^m \otimes \partial^n, \Delta(x^{m+n}) \rangle
  &= \ts \left\langle \partial^m \otimes \partial^n, \sum_{k=0}^{m+n} \qbin{m+n}{k}_q x^k \otimes x^{m+n-k} \right\rangle \\
  &= \ts \qbin{m+n}{m}_q \langle \partial^m \otimes \partial^n, x^m \otimes x^n \rangle \\
  &= [m+n]_q! \\
  &= \langle \partial^m \partial^n, x^{m+n} \rangle
\end{align*}
and
\begin{align*}
  \langle \Delta(\partial^{m+n}), x^m \otimes x^n \rangle\
  &= \ts \left\langle \sum_{k=0}^{m+n} \qbin{m+n}{k}_{q^{-1}} \partial^k \otimes \partial^{m+n-k}, x^m \otimes x^n \right\rangle \\
  &= \ts \qbin{m+n}{m}_{q^{-1}} \langle \partial^m \otimes \partial^n, x^m \otimes x^n \rangle \\
  &= \ts q^{-mn}\qbin{m+n}{m}_q \langle \partial^m \otimes \partial^n, x^m \otimes x^n \rangle \\
  &= \ts q^{-mn} [m+n]_q! \\
  &= q^{-mn} \langle \partial^{m+n}, x^m x^n \rangle.
\end{align*}
Thus $\langle -, - \rangle$ is a $(q,\gamma)$-twisted pairing with $\gamma = (0,\zeta)$.  Since $\gamma'=0=\chi'$, the dual pair is compatible.  Because $\xi''=0$, in the twisted Heisenberg double we have
\[
  \partial x = (1 \# \partial)(x \# 1) = q^{|x||\partial|} 1^*(x) \# \partial + q^{|x||1|} \partial^*(x) \# 1 = q x \# \partial + q^0 1 \# 1 = q x \partial + 1.
\]
Thus we see that
\[
  \fh(H^+,H^-) = \kk \langle x, \partial\ |\ \partial x = q x \partial + 1 \rangle
\]
is the \emph{quantum Weyl algebra}.  Its Fock space representation is the representation on $\kk[x]$ given by
\[
  x \cdot x^n = x^{n+1},\quad \partial \cdot x^n = [n]_q x^{n-1},\quad n \in \N.
\]

%
\section{Quantum Heisenberg algebras} \label{sec:q-Heis}
%

In this section, we show that quantum Heisenberg algebras can be realized as Heisenberg doubles.  In fact, in this case, the twistings are trivial.  Nevertheless, we include a discussion of quantum Heisenberg algebras in the current paper for two reasons.  First, the categorification of these algebras appearing in the literature (see, for example,~\cite{CL12}) involves categories of \emph{graded} modules, which form the motivation for our introduction of twisted Heisenberg doubles (since, in general, such categorifications involve nontrivial twistings).  Second, the twisted Heisenberg double point of view allows us to determine the effect of shifting various induction and restriction functors, using the results of Section~\ref{sec:shift}.

Fix a finite set $I$ and a symmetric map $\langle -, - \rangle \colon I \times I \to \Z$.  Then we have a symmetric matrix $A$ whose $(i,j)$ entry is $A_{ij} = \langle i, j \rangle$ for $i,j \in I$.  For $n \in \N$, define the symmetric quantum integer
\[
  [n] = \frac{q^{-n} - q^n}{q^{-1} - q} = q^{-n+1} + q^{-n+3} + \dotsb + q^{n-3} + q^{n-1},
\]
where $q$ is an indeterminate.  We also define $[-n] = (-1)^{n+1}[n]$ for $n \in \N_+$.  Note that this is different than the quantum integer $[n]_q$ defined in Section~\ref{sec:quantum-Weyl}.  One could modify the construction in Section~\ref{sec:quantum-Weyl} by a shift (see Section~\ref{sec:shift}) in order to use the symmetric quantum integer $[n]$, but we use different choices in the two settings to obtain presentations that are more natural from the point of view of categorification.

\begin{lem}
  Suppose one of the following conditions hold:
  \begin{enumerate}
    \item The matrix $A$ is nonsingular (e.g.\ $A$ is a Cartan matrix of finite ADE type).
    \item The matrix $A$ is a Cartan matrix of affine ADE type other than type $A_2^{(1)}$.
  \end{enumerate}
  Then the matrix $([k \langle i, j \rangle])_{i,j \in I}$ is nonsingular for all $k \in \N_+$.
\end{lem}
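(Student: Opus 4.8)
The plan is to prove nonsingularity over the field $\Q(q)$ by showing that $d_k(q):=\det\big([k\langle i,j\rangle]\big)_{i,j\in I}$ is a nonzero element of $\Q(q)$. Each entry is a symmetric Laurent polynomial in $q$, so $d_k$ is a Laurent polynomial, and the task reduces to certifying that it does not vanish identically. I would do this by two complementary devices: reading off a single coefficient of $d_k$, or reading off its value at a well-chosen specialization of $q$.

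The cleanest situation is when $A=(\langle i,j\rangle)$ is simply-laced, i.e. $\langle i,i\rangle=2$ and $\langle i,j\rangle\in\{0,-1\}$ for $i\neq j$; this covers case (b) and all the ADE examples of case (a). Here I would compare $q$-degrees in the Leibniz expansion of the determinant. The diagonal entries $[2k]$ have top $q$-degree $2k-1$, while each off-diagonal entry $[-k]=(-1)^{k+1}[k]$ has top degree $k-1$ (and non-adjacent entries are $[0]=0$). For a permutation with $f<|I|$ fixed points the corresponding product has $q$-degree at most $f(2k-1)+(|I|-f)(k-1)=|I|(2k-1)-(|I|-f)k<|I|(2k-1)$, so the identity permutation alone realizes the top degree $|I|(2k-1)$, contributing coefficient $\prod_i 1=1$. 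Hence the leading coefficient of $d_k$ is $1\neq0$, so $d_k\neq0$ for every $k\in\N_+$.

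For a general nonsingular $A$ in case (a) (not necessarily simply-laced), where the degree comparison can fail, I would instead specialize $q\to1$. Under the honest normalization $[m]\to m$, the matrix $([k\langle i,j\rangle])$ degenerates to $kA$, so $d_k(1)=k^{|I|}\det A\neq0$; a Laurent polynomial with a nonzero value at $q=1$ is not the zero polynomial, giving $d_k\neq0$. Equivalently, for simply-laced $A$ the specialization at $q=1$ yields $k(2I\pm G)$ with $G$ the adjacency matrix, which is nonsingular for finite type because the finite-type bound forces the spectral radius $\rho(G)<2$.

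The real content — and the main obstacle — is case (b), where $A$ is the singular affine Cartan matrix: now $\det A=0$, the naive specialization $q\to1$ is uninformative, and the point is precisely that the $q$-deformation lifts the degeneracy of the affine form. The degree argument above already handles this uniformly, but if one prefers a structural proof I would diagonalize: for the cyclic diagrams $A^{(1)}_n$ the matrix is a circulant, with eigenvalues $[2k]+2[-k]\cos\theta_j$, $\theta_j=2\pi j/(n+1)$, each nonzero because the ratio $[2k]/[-k]=\pm(q^k+q^{-k})$ is a nonconstant rational function and so cannot equal the constant $-2\cos\theta_j$; the remaining tree diagrams $D^{(1)}_n,E^{(1)}_n$ are bipartite, so after the substitution $q\mapsto q^k$ and conjugation by a $\pm1$ diagonal $2$-coloring matrix one reduces to $k=1$. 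I expect the bookkeeping of the sign rule $[-n]=(-1)^{n+1}[n]$ — which for even $k$ is what forces the bipartite $2$-coloring and singles out the odd cycle $A^{(1)}_2$ — to be the fiddliest step in the structural route; note, however, that the degree computation of the second paragraph treats $A^{(1)}_2$ on the same footing as every other simply-laced diagram, so the exclusion in the statement reflects the reach of the uniform circulant/reduction argument rather than being forced by singularity.
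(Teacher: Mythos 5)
Your central argument --- the Leibniz/top-degree comparison --- is correct, and it is a genuinely different route from the paper's. The paper argues on kernel vectors: assuming $Bz=0$ for some nonzero $z\in\Z[q,q^{-1}]^I$, it specializes at $q=1$ to produce a null vector of $kA$, which is meant to settle case (a), and in the affine case it then verifies type by type (explicitly only for $A_n^{(1)}$, with ``types D and E are similar'') that $B\delta\neq 0$ for the affine null vector $\delta$. Your observation that the coefficient of $q^{|I|(2k-1)}$ in $\det B$ is exactly $1$ is valid for every symmetric matrix with diagonal entries $2$ and off-diagonal entries in $\{0,-1\}$, and it buys several things at once: no case analysis and no knowledge of affine null vectors; no passage between $\ker B$ over $\Q(q)$ and the kernel of the specialized matrix (a step the paper treats loosely --- a kernel vector of $B$ need only \emph{specialize} at $q=1$ to a multiple of $\delta$, it need not equal $\delta$, so the check $B\delta\neq 0$ does not by itself close the paper's argument as written); and it applies verbatim to $A_2^{(1)}$, confirming your remark that this exclusion is an artifact of the method rather than of the mathematics (indeed there $\det B=([2k]-[-k])^2([2k]+2[-k])\neq0$). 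Your circulant/bipartite sketch is also essentially sound, though the degree argument supersedes it.

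However, your second paragraph contains a genuine error --- and it is precisely the error the paper itself makes. Under the paper's convention $[-n]=(-1)^{n+1}[n]$ (which you quote correctly in your final paragraph), the specialization $q=1$ sends $[m]$ to $m$ only when $m\geq 0$ or $m$ is even; when $m$ is negative and odd it sends $[m]$ to $|m|$. So $([k\langle i,j\rangle])|_{q=1}$ is \emph{not} $kA$ in general, and the claim $d_k(1)=k^{|I|}\det A$ fails whenever some $k\langle i,j\rangle$ is negative and odd; this is exactly the sign ambiguity you yourself record in the spectral-radius remark (the specialization is $k(2I\pm G)$, with sign depending on the parity of $k$), and it cannot be waved away outside the simply-laced setting. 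In fact, case (a) of the lemma is false in the stated generality: take $I=\{1,2\}$ with $\langle 1,1\rangle=1$ and $\langle 1,2\rangle=\langle 2,2\rangle=-1$, so $\det A=-2\neq0$; then for any odd $k$ one has $[-k]=[k]$, so all four entries of $([k\langle i,j\rangle])$ equal $[k]$ and the matrix is singular. The paper's proof asserts ``taking $q=1$ would then yield a null vector for $kA$,'' which is the same unjustified step, so on this point your blind reconstruction reproduces the paper's gap rather than introducing a new one. The honest conclusion is that case (a) must be read with a simply-laced-type restriction (or restricted to even $k$), and on that corrected domain your degree argument already gives a complete, and cleaner, proof.
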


\begin{proof}
  Let $B$ be the matrix whose $(i,j)$ entry is $[k\langle i,j \rangle]$.  Thus, we wish to show that $B z \ne 0$ for all nonzero $z \in \Z[q,q^{-1}]^I$ (hence also for all $z \in \Q(q)^I$).  Suppose, on the contrary, that $Bz = 0$ for some nonzero $z \in \Z[q,q^{-1}]^I$.  Taking $q=1$ would then yield a null vector for $kA$.  Thus, if $A$ is nonsingular, so is $B$.  It remains to consider affine ADE type (other that type $A_2^{(1)}$) and $z$ equal to the unique (up to scalar multiple) null vector of $A$.  Direct computation then shows that, in each case, $Bz \ne 0$.  For instance, in type A, we can take $I = \{0,\dotsc,n\}$ and $B$ to be the matrix whose $(i,j)$ entry is
  \[
    B_{ij} =
    \begin{cases}
      [2k] & i = j, \\
      [-k] & i-j \equiv \pm 1 \mod n+1, \\
      0 & \text{otherwise},
    \end{cases}
  \]
  and $z = (1,\dotsc,1)$.  Then every entry of $Bz$ is $[2k] + 2[-k] \ne 0$.  Types D and E are similar.
\end{proof}

For the remainder of this section, unless otherwise indicated, we assume that the matrix $([k \langle i, j \rangle])_{i,j \in I}$ is nonsingular for all $k \in \N_+$.

For convenience of notation, we identify $I$ with the set $\{1,2,\dotsc,|I|\}$. Let $\kk = \Q(q)$, and define the graded Hopf algebra
\[
  H^+ = \Sy^{\otimes I} = \bigotimes_{i \in I} \Sy,
\]
where $\Sy$ is the algebra of symmetric functions over $\kk$ and the usual grading on $\Sy$ (by degree) induces the grading on $H^+$.  For $n \in \N$ and $i \in I$, let $p_{n,i}$ denote the $n$-th power sum in the $i$-th factor of $H^+$.  Then we have an isomorphism of $\kk$-algebras
\[
  H^+ \cong \kk[p_{n,i}\ |\ n \in \N,\ i \in I],
\]
and
\begin{equation} \label{eq:p-coproduct}
  \Delta(p_{n,i}) = p_{n,i} \otimes 1 + 1 \otimes p_{n,i},\quad n \in \N_+,\ i \in I.
\end{equation}
We adopt the convention that $p_{0,i}=1$ and $p_{n,i}=0$ for $n<0$ and $i \in I$.  Let $\cP$ denote the set of partitions.  For a partition $\lambda \in \cP$, and $k \in \N$, we let $m_k(\lambda)$ denote the number of parts of $\lambda$ equal to $k$.  For $\lambda = (\lambda_1,\lambda_2,\dotsc) \in \cP$ and $i \in I$, define
\[
  p_{\lambda,i} = \prod_{k=1}^{\ell(\lambda)} p_{\lambda_k,i},
\]
where $\ell(\lambda)$ denotes the length of $\lambda$ (i.e.\ the number of nonzero parts).

We will use bold Greek symbols $\blambda$, $\bmu$, etc.\ to denote elements of $\cP^I$, and the $i$-th component of such an element will be denoted by a superscript $i$ on the corresponding non-bold letter.  For example, $\lambda^i$ is the $i$-th component of $\blambda$, and the $k$-th part of $\lambda^i$ is $\lambda^i_k$.  For $\blambda \in \cP^I$, let
\[
  p_\blambda = \prod_{i \in I} p_{\lambda^i,i} = \prod_{i \in I} \prod_{k=1}^{\ell(\lambda^i)} p_{\lambda^i_k,i}.
\]
Then the $p_\blambda$, $\blambda \in \cP^I$, form a basis of $\Sy^{\otimes I}$.

For $\blambda \in \cP^I$, define $\underline{\blambda}$ to be the sequence of elements of $\N_+ \times I$, in lexicographical order, such that the element $(k,i)$ appears $m_k(\lambda^i)$ times.  In other words $\underline{\blambda}$ consists of all the parts of the $\lambda^i$, $i \in I$, with their color $i$ recorded.  For example, if $|I| = 3$ and $\blambda = (\lambda^1,\lambda^2,\lambda^3) = ((3,2,1),(2,2),(5,4,3,1))$, then
\[
  \underline{\blambda} = ((1,1),(1,3),(2,1),(2,2),(2,2),(3,1),(3,3),(4,3),(5,3)).
\]
We let $\ell(\underline{\blambda}) = \sum_{i \in I} \ell(\lambda^i)$ be the length of the sequence $\underline{\blambda}$.  We will write the $n$-th term of the sequence $\underline{\blambda}$ as
\[
  \underline{\blambda}_n = (\prt(\underline{\blambda}_n),\clr(\underline{\blambda}_n)) \in \N_+ \times I,\quad \text{for } n = 1,\dotsc,\ell(\underline{\blambda}).
\]
We define a symmetric bilinear form on $H^+$ by
\begin{equation}  \label{eq:q-Heis-bilinear-form}
  \langle p_\blambda, p_\bmu \rangle = \delta_{\ell(\underline{\blambda}), \ell(\underline{\bmu})} \sum_{\sigma \in S_{\ell(\underline{\blambda})}} \prod_{r=1}^{\ell(\underline{\blambda})} \delta_{\prt \left( \underline{\blambda}_r \right), \prt \left( \underline{\bmu}_{\sigma(r)} \right)} \left[ \prt \left( \underline{\blambda}_r \right) \left\langle \clr\left(\underline{\blambda}_r\right), \clr\left(\underline{\bmu}_{\sigma(r)}\right) \right\rangle \right] \frac{\left[\prt\left(\underline{\blambda}_r\right)\right]}{\prt\left(\underline{\blambda}_r\right)}.
\end{equation}
Note that this implies that $\langle p_\blambda, p_\bmu \rangle = 0$ unless, for each $k \in \N_+$, we have $\sum_{i \in I} m_k(\lambda^i) = \sum_{i \in I} m_k(\mu^i)$.  In particular, we have
\[
  \langle p_{\lambda,i}, p_{\mu,j} \rangle = \delta_{\lambda \mu} \prod_{k \ge 1} \left( [k \langle i,j \rangle] \frac{[k]}{k} \right)^{m_k(\lambda)} m_k(\lambda)!.
\]

For a partition $\lambda = (\lambda_1,\dotsc,\lambda_\ell) \in \cP$ with $\lambda_i = k$ for some $k \in \N$, define $\lambda \ominus k = (\lambda_1,\dotsc,\lambda_{i-1},\lambda_{i+1},\dotsc,\lambda_\ell)$.  Similarly, for any partition $\lambda \in \cP$ and $k \in \N$, we define $\lambda \oplus k$ to be the partition obtained from $\lambda$ by adding a part $k$.  For another $\mu = (\mu_1,\dotsc,\mu_{\ell'}) \in \cP$, we define $\lambda \oplus \mu = (\cdots((\lambda \oplus \mu_1) \oplus \mu_2) \oplus \dotsb \oplus \mu_{\ell'})$.  If $\mu$ is a subpartition of $\lambda$, we similarly define $\lambda \ominus \mu = (\cdots((\lambda \ominus \mu_1) \ominus \mu_2) \ominus \dotsb \ominus \mu_{\ell'})$.  For $\blambda \in \cP^I$, we let $\blambda \oplus_i k$ denote the element of $\cP^I$ whose $j$-th component is $\lambda^j$ for $j \ne i$, and whose $i$-th component is $\lambda^i \oplus k$.  We define $\blambda \oplus \bmu$ to be the element of $\cP^I$ whose $j$-th component is $\lambda^j \oplus \mu^j$.  Similarly, if $\mu^j$ is a subpartition of $\lambda^j$ for all $j \in I$, then we define $\blambda \ominus \bmu$ to be the element of $\cP^I$ whose $j$-th component is $\lambda^j \ominus \mu^j$.

\begin{prop}
  The symmetric bilinear form~\eqref{eq:q-Heis-bilinear-form} is a Hopf pairing (i.e.\  a $(1,0,0)$-twisted pairing of Hopf algebras).
\end{prop}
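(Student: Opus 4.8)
The plan is to verify the three defining conditions of a $(1,0,0)$-twisted pairing from Definition~\ref{def:hopf-pairing}, specialized to $c=1$ and $\gamma'=\gamma''=0$: the two multiplicativity identities $\langle xy,a\rangle = \langle x\otimes y,\Delta(a)\rangle$ and $\langle x,ab\rangle = \langle\Delta(x),a\otimes b\rangle$, together with the counit identities. Since the form~\eqref{eq:q-Heis-bilinear-form} is symmetric, and since $\Delta$ is cocommutative (the $p_{n,i}$ being primitive by~\eqref{eq:p-coproduct}), the induced form on $H^+\otimes H^+$ is also symmetric, so the second multiplicativity condition follows from the first, and one counit identity follows from the other. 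Thus it suffices to check $\langle p_\blambda p_\bmu,p_\bnu\rangle = \langle p_\blambda\otimes p_\bmu,\Delta(p_\bnu)\rangle$ and $\langle 1,p_\bnu\rangle = \varepsilon(p_\bnu)$ on the basis $\{p_\blambda\}_{\blambda\in\cP^I}$. (Note that the nonsingularity hypothesis of this section plays no role here; it is needed only for perfectness, not for the pairing property.)

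The key observation is that~\eqref{eq:q-Heis-bilinear-form} is precisely the \emph{permanent} formula built from the pairing of single primitives. Reading off the length-one case, set
\[
  \langle (k,i),(m,j)\rangle := \delta_{km}\,[k\langle i,j\rangle]\,\frac{[k]}{k},
\]
so that the form is $\langle p_\blambda,p_\bmu\rangle = \delta_{\ell(\underline{\blambda}),\ell(\underline{\bmu})}\sum_{\sigma\in S_{\ell(\underline{\blambda})}}\prod_r\langle \underline{\blambda}_r,\underline{\bmu}_{\sigma(r)}\rangle$. Because this is a sum over all of $S_{\ell(\underline{\blambda})}$, its value is independent of the order in which the colored parts are listed (relabelling the parts only reindexes the sum). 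This order-independence lets me identify $\underline{\blambda\oplus\bmu}$ with the concatenation of $\underline{\blambda}$ and $\underline{\bmu}$, which matches the way the product $p_\blambda p_\bmu = p_{\blambda\oplus\bmu}$ appears in the polynomial algebra $H^+$.

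For multiplicativity I expand the coproduct. Writing $\underline{\bnu}=(d_1,\dots,d_N)$, primitivity gives $\Delta(p_\bnu)=\prod_{s=1}^N(p_{d_s}\otimes 1 + 1\otimes p_{d_s})=\sum_{S\subseteq\{1,\dots,N\}} p_{\bnu|_S}\otimes p_{\bnu|_{S^c}}$, where $p_{\bnu|_S}$ is the monomial in the parts indexed by $S$. Pairing against $p_\blambda\otimes p_\bmu$ and applying the permanent formula to each tensor factor expresses the right-hand side as a sum over subsets $S$ with $|S|=\ell(\underline{\blambda})$, a bijection $\tau$ matching $\underline{\blambda}$ to $S$, and a bijection $\rho$ matching $\underline{\bmu}$ to $S^c$. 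The heart of the argument is the bijection $(S,\tau,\rho)\leftrightarrow\sigma$ with the single matchings $\sigma$ of $\underline{\blambda\oplus\bmu}$ (the concatenation) against $\underline{\bnu}$: one recovers $S$ as the image of the $\underline{\blambda}$-block under $\sigma$, and $\tau,\rho$ as its two restrictions. Under this correspondence the product of generator pairings is preserved term by term, so the sum collapses to the permanent formula for $\langle p_{\blambda\oplus\bmu},p_\bnu\rangle=\langle p_\blambda p_\bmu,p_\bnu\rangle$.

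Finally, the counit condition is immediate: $1=p_\varnothing$ has $\ell(\underline{\varnothing})=0$, so $\langle 1,p_\bnu\rangle=\delta_{0,\ell(\underline{\bnu})}$, which equals $1$ exactly when $\bnu$ is empty and $0$ otherwise, i.e.\ $\varepsilon(p_\bnu)$. I expect the main obstacle to be the combinatorial bookkeeping in the multiplicativity step, namely checking that summing over subsets $S$ and the internal matchings $\tau,\rho$ reproduces each global matching $\sigma$ exactly once with the correct weight; here it is essential to have first recorded the order-independence of the permanent, so that the scrambling of lexicographic order under $\oplus$ causes no difficulty. Repeated colored parts (so that distinct $S$ can give equal monomials) require care, but are handled automatically since the coproduct is expanded over the index set $\{1,\dots,N\}$ rather than over distinct monomials.
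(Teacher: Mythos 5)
Your proof is correct and follows essentially the same route as the paper: expand $\Delta(p_\bnu)$ via primitivity into a sum over subsets/subsequences of $\underline{\bnu}$, then match this against the permanent formula for the pairing, reducing the remaining axioms to symmetry and the counit check. The only difference is one of detail: the paper simply asserts the key identity $\sum_{\bmu'} \langle p_\blambda, p_{\bmu'} \rangle \langle p_\bnu, p_{\bmu \ominus \bmu'} \rangle = \langle p_{\blambda \oplus \bnu}, p_\bmu \rangle$, whereas you make explicit the bijection $(S,\tau,\rho) \leftrightarrow \sigma$, the order-independence of the permanent, and the handling of repeated colored parts that justify it.
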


\begin{proof}
  For $\blambda,\bnu,\bmu \in \cP^I$, we have
  \[
    \langle p_\blambda \otimes p_\bnu, \Delta(p_\bmu) \rangle = \sum_{\bmu'} \langle p_\blambda, p_{\bmu'} \rangle \langle p_\bnu, p_{\bmu \ominus \bmu'} \rangle = \langle p_{\blambda \oplus \bnu}, p_\bmu \rangle = \langle \nabla(p_\blambda \otimes p_\bnu), p_\mu \rangle,
  \]
  where the sum is over all $\bmu' \in \cP^I$ corresponding to subsequences of $\underline{\bmu}$.  The remaining axioms of a Hopf pairing are easily seen to be satisfied.
\end{proof}

For the purposes of proving some of the results below, we temporarily introduce maps $\varphi_{k,i}$, $k \in \N$, $i \in I$, as follows.  (We will see in Proposition~\ref{prop:q-Heis-form-nondegen} that $\varphi_{k,i}$ is the adjoint to multiplication by $p_{k,i}$.)  We define $\varphi_{k,i}$ to be the derivation on $H^+$ given on the generators $p_{n,j}$, $n \in \N$, $j \in I$, by $\varphi_{k,i}(p_{n,j}) = \delta_{kn} [k \langle i,j \rangle] \frac{[k]}{k}$.  Thus, for $\lambda \in \cP$ and $j \in I$, we have
\begin{equation} \label{eq:varphi-def}
  \varphi_{k,i}(p_{\lambda,j}) = m_k(\lambda) [k\langle i,j \rangle] \frac{[k]}{k} p_{\lambda \ominus k, j},
\end{equation}
where we interpret $p_{\lambda \ominus k, j}=0$ if $\lambda$ has no part equal to $k$.

\begin{lem} \label{lem:pp-adjoint-action}
  For $x \in H^+$, $k \in \N$, $\lambda \in \cP$, and $i,j \in I$, we have
  \[
    \langle p_{k,i} x, p_{\lambda,j} \rangle = \left\langle x, \varphi_{k,i}(p_{\lambda,j}) \right\rangle,
  \]
\end{lem}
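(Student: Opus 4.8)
The plan is to exploit the fact, established in the preceding Proposition, that the form is a Hopf pairing (a $(1,0,0)$-twisted pairing), so that multiplication in the first argument is adjoint to comultiplication in the second. Concretely, I would begin from
\[
  \langle p_{k,i} x, p_{\lambda,j} \rangle = \langle p_{k,i} \otimes x, \Delta(p_{\lambda,j}) \rangle,
\]
and then compute the right-hand side explicitly. Since $H^+$ is an (untwisted) Hopf algebra and each power sum $p_{n,j}$ is primitive by~\eqref{eq:p-coproduct}, multiplicativity of $\Delta$ gives
\[
  \Delta(p_{\lambda,j}) = \prod_{m=1}^{\ell(\lambda)} \left( p_{\lambda_m,j} \otimes 1 + 1 \otimes p_{\lambda_m,j} \right).
\]
Expanding this product, each resulting term has the form $p_{\mu,j} \otimes p_{\lambda \ominus \mu, j}$, where $\mu$ ranges over the subpartitions of $\lambda$ (the choice of $\mu$ recording which factors contribute their left tensorand).

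Next I would identify which of these terms survive when paired against $p_{k,i}$ in the first slot. Viewing $p_{k,i}$ as $p_\blambda$ for the $\blambda \in \cP^I$ having a single part $k$ in color $i$, its sequence $\underline{\blambda}$ has length $1$; by the length condition built into~\eqref{eq:q-Heis-bilinear-form}, $\langle p_{k,i}, p_{\mu,j} \rangle = 0$ unless $p_{\mu,j}$ is itself a single power sum, while $\langle p_{k,i}, 1 \rangle = \varepsilon(p_{k,i}) = 0$ since $k \geq 1$. Hence only the terms in which $\mu$ consists of a single part survive, and such a term pairs nontrivially only when that part equals $k$. Each of the $m_k(\lambda)$ factors in the product with $\lambda_m = k$ yields exactly one surviving term, whose left tensorand is $p_{k,j}$ and whose right tensorand is $p_{\lambda \ominus k, j}$.

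Collecting these contributions, and using $\langle p_{k,i}, p_{k,j} \rangle = [k\langle i,j \rangle]\frac{[k]}{k}$ (the single-part case of~\eqref{eq:q-Heis-bilinear-form}), I would obtain
\[
  \langle p_{k,i} \otimes x, \Delta(p_{\lambda,j}) \rangle = m_k(\lambda)\, [k\langle i,j \rangle] \frac{[k]}{k}\, \langle x, p_{\lambda \ominus k, j} \rangle.
\]
By the formula~\eqref{eq:varphi-def} for $\varphi_{k,i}(p_{\lambda,j})$, the right-hand side is exactly $\langle x, \varphi_{k,i}(p_{\lambda,j}) \rangle$, which completes the argument.

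The computation is otherwise routine; the one step requiring care is the selection of surviving terms, namely the observation that $p_{k,i}$, being a single power sum, is orthogonal under the pairing to every left tensorand that is a product of two or more power sums, as well as to the constant $1$. This is precisely what collapses the full expansion of $\Delta(p_{\lambda,j})$ down to a sum over single parts equal to $k$, producing the multiplicity $m_k(\lambda)$ and thereby matching the derivation $\varphi_{k,i}$.
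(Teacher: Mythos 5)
Your proof is correct, but it follows a genuinely different route from the paper's. The paper proceeds by induction on the length of $\lambda$: for $\ell(\lambda) \geq 2$ it splits $\lambda = \mu \oplus \nu$, uses the Hopf-pairing axiom in the form $\langle x, ab \rangle = \langle \Delta(x), a \otimes b \rangle$ together with the primitivity of $p_{k,i}$, applies the inductive hypothesis to $\mu$ and $\nu$, and closes the loop using the fact that $\varphi_{k,i}$ is a derivation; the explicit structure of the form~\eqref{eq:q-Heis-bilinear-form} enters only in the length-zero and length-one base cases. You instead make a one-shot computation: apply the other pairing axiom $\langle p_{k,i} x, p_{\lambda,j} \rangle = \langle p_{k,i} \otimes x, \Delta(p_{\lambda,j}) \rangle$, expand $\Delta(p_{\lambda,j})$ completely as a product of primitives, and then collapse the expansion using the length-matching delta $\delta_{\ell(\underline{\blambda}),\ell(\underline{\bmu})}$ built into~\eqref{eq:q-Heis-bilinear-form}, which kills every left tensorand that is a product of two or more power sums (or the constant $1$); this produces the multiplicity $m_k(\lambda)$ directly and matches~\eqref{eq:varphi-def}. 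Your argument is shorter and makes the combinatorial origin of $m_k(\lambda)$ transparent, at the cost of leaning harder on the specific formula for the form; the paper's induction is more axiomatic, needing only the Hopf-pairing property, primitivity, the derivation property of $\varphi_{k,i}$, and the single-part evaluation, so it would transfer to any pairing satisfying those base cases. One cosmetic caveat in your write-up: distinct choices of factors can yield the same subpartition $\mu$, so the expansion of $\Delta(p_{\lambda,j})$ is a sum over subsets of factors rather than over subpartitions, but your subsequent counting (one surviving term per factor with $\lambda_m = k$) handles this correctly.
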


\begin{proof}
  We prove the result by induction on the length of $\lambda$.  First note that, for $c \in \kk$, we have $\varphi_{k,i}(c)=0$ and $\langle p_{k,i} x, c \rangle=0$.  Thus the result holds for $\lambda$ of length zero.  Now, for $n \in \N$ and $i,j \in I$, we have $\langle p_{k,i}, p_{n,j} \rangle = \delta_{kn} [k\langle i,j \rangle] \frac{[k]}{k}$  and $\langle p_{k,i}x, p_{n,j} \rangle = 0 = \langle x, \delta_{kn} [k\langle i,j \rangle] \frac{[k]}{k} \rangle$ for $x \in H_n^+$ with $n > 0$.  Thus, the result holds for $\lambda$ of length one.

  Now assume $\lambda$ has length at least two.  Then we can choose nonempty partitions $\mu,\nu \in \cP$ with $\mu \oplus \nu = \lambda$.  By~\eqref{eq:p-coproduct}, we have, for all $x \in H^+$,
  \begin{multline*}
    \langle p_{k,i}x, p_{\lambda,j} \rangle = \langle p_{k,i} x, p_{\mu,j}p_{\nu,j} \rangle = \langle \Delta(p_{k,i} x), p_{\mu,j} \otimes p_{\nu,j} \rangle = \langle \Delta(p_{k_i}) \Delta(x), p_{\mu,j} \otimes p_{\nu,j} \rangle \\
    = \langle (p_{k,i} \otimes 1 + 1 \otimes p_{k,i}) \Delta(x), p_{\mu,j} \otimes p_{\nu,j} \rangle
    = \langle \Delta(x), \varphi_{k,i}(p_{\mu,j}) \otimes p_{\nu,j} + p_{\mu,j} \otimes \varphi_{k,i}(p_{\nu,j}) \rangle \\
    = \langle x, \varphi_{k,i}(p_{\mu,j})p_{\nu,j} + p_{\mu,j} \varphi_{k,i}(p_{\nu,j}) \rangle = \langle x, \varphi_{k,i}(p_{\mu,j}p_{\nu,j}) \rangle = \langle x, \varphi_{k,i}(p_{\lambda,j}) \rangle,
  \end{multline*}
  where we have used the inductive hypothesis in the fifth equality.  The result thus follows by induction.
\end{proof}

\begin{prop} \label{prop:q-Heis-form-nondegen}
  The bilinear form $\langle -, - \rangle$ is nondegerate and
  \begin{equation} \label{eq:pp-adjoint-action}
    p_{k,i}^*(p_{\lambda,j}) = \varphi_{k,i}(p_{\lambda,j}) = m_k(\lambda) [k\langle i,j \rangle] \frac{[k]}{k} p_{\lambda \ominus k, j},
  \end{equation}
  for all $k \in \N$, $i,j \in I$, $\lambda \in \cP$.
\end{prop}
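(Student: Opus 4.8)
The plan is to establish nondegeneracy first and then read off the formula for $p_{k,i}^*$ as an immediate consequence, since the two claims are logically ordered this way. Once the form is known to be nondegenerate, it restricts to a perfect pairing on each finite-dimensional graded piece over the field $\Q(q)$, so the adjoint $p_{k,i}^* = \pR{p_{k,i}}^*$ is a well-defined element of $\End H^+$, characterized by $\langle y, p_{k,i}^*(a)\rangle = \langle y\,p_{k,i}, a\rangle$. Because $H^+=\Sy^{\otimes I}$ is commutative, for every $x\in H^+$ I get $\langle x, p_{k,i}^*(p_{\lambda,j})\rangle = \langle x\,p_{k,i}, p_{\lambda,j}\rangle = \langle p_{k,i}\,x, p_{\lambda,j}\rangle$, which by Lemma~\ref{lem:pp-adjoint-action} equals $\langle x, \varphi_{k,i}(p_{\lambda,j})\rangle$. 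Nondegeneracy then forces $p_{k,i}^*(p_{\lambda,j}) = \varphi_{k,i}(p_{\lambda,j})$, and the explicit value is exactly~\eqref{eq:varphi-def}. Thus the entire proposition reduces to proving that $\langle -, - \rangle$ is nondegenerate.

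To prove nondegeneracy I would recognize~\eqref{eq:q-Heis-bilinear-form} as a bosonic, or \emph{permanent}, form. Let $W=\bigoplus_{k\in\N_+,\ i\in I}\kk\,p_{k,i}$ be the span of the polynomial generators, equipped with the symmetric bilinear form $B$ given by $B(p_{k,i},p_{l,j}) = \delta_{kl}[k\langle i,j\rangle]\frac{[k]}{k}$, i.e.\ $\langle-,-\rangle$ restricted to the generators. Identifying $H^+$ with the symmetric algebra on $W$ via the $p_{k,i}$, formula~\eqref{eq:q-Heis-bilinear-form} says precisely that $\langle p_\blambda, p_\bmu\rangle$ is the permanent of the Gram matrix $\big(B(\underline{\blambda}_r,\underline{\bmu}_s)\big)_{r,s}$ of the corresponding lists of generators; the factor $\delta_{\prt(\underline{\blambda}_r),\prt(\underline{\bmu}_{\sigma(r)})}$ is exactly the $\delta_{kl}$ appearing in $B$. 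Consequently $\langle-,-\rangle$ is the standard form induced on a symmetric algebra by the one-particle form $B$, hence is independent of the choice of basis of $W$; moreover it vanishes between monomials of different multiplicity type (as already noted after~\eqref{eq:q-Heis-bilinear-form}), so it suffices to check nondegeneracy on each finite-dimensional graded piece.

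The form $B$ on $W$ is block diagonal over $k$, with $k$-th block $B^{(k)} = \frac{[k]}{k}\big([k\langle i,j\rangle]\big)_{i,j\in I}$. Since $[k]/k\neq 0$ in $\Q(q)$ and $\big([k\langle i,j\rangle]\big)_{i,j}$ is nonsingular for every $k\in\N_+$ by the standing assumption of this section, each block is a nonsingular symmetric matrix, so $B$ is nondegenerate on $W$. Working over $\Q(q)$ (characteristic $0$), I would diagonalize each block by congruence to obtain a basis $\{f_{k,a}\}$ of $W$ that is orthogonal for $B$, with $B(f_{k,a},f_{k,a}) = d_{k,a}\neq 0$. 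Because $\langle-,-\rangle$ is the permanent form built from $B$, the monomials $\prod_{k,a} f_{k,a}^{M_{k,a}}$ form an orthogonal basis of $H^+$: evaluating the permanent against a \emph{diagonal} one-particle Gram matrix forces the two exponent multisets to coincide, giving $\big\langle \prod f_{k,a}^{M_{k,a}}, \prod f_{k,a}^{M'_{k,a}}\big\rangle = \delta_{M,M'}\prod_{k,a} M_{k,a}!\,d_{k,a}^{M_{k,a}}$, which is nonzero on the diagonal. An orthogonal basis with nonzero self-pairings on each graded piece proves nondegeneracy.

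The only real content is the observation in the second paragraph: recognizing~\eqref{eq:q-Heis-bilinear-form} as a permanent form whose one-particle metric is precisely the matrix $\big([k\langle i,j\rangle]\big)$, which is exactly why the preceding Lemma on the nonsingularity of that matrix was proved. After that, the nondegeneracy of $B$ reduces to a single finite matrix for each $k$, the orthogonalization is routine linear algebra over a field of characteristic $0$, and the passage from nondegeneracy to $p_{k,i}^*=\varphi_{k,i}$ is immediate from Lemma~\ref{lem:pp-adjoint-action}. I expect the main obstacle to be bookkeeping rather than mathematics: making precise that~\eqref{eq:q-Heis-bilinear-form} is a basis-independent symmetric-algebra form, so that the change to the orthogonal basis $\{f_{k,a}\}$ is legitimate, and verifying the diagonal-permanent evaluation that yields the orthogonal monomial basis.
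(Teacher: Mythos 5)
Your proof is correct, but the core nondegeneracy argument is genuinely different from the paper's. The paper proceeds by induction on the degree: given a nonzero homogeneous $a$ of positive degree, it writes $a=\sum_\blambda a_\blambda p_\blambda$ and uses the nonsingularity of $([k\langle i,j\rangle])_{i,j\in I}$ to find $k,i$ with $\varphi_{k,i}(a)\neq 0$; since $\varphi_{k,i}(a)$ has lower degree, Lemma~\ref{lem:pp-adjoint-action} plus the inductive hypothesis produce an element pairing nontrivially with $a$. The identification $p_{k,i}^*=\varphi_{k,i}$ is then read off from Lemma~\ref{lem:pp-adjoint-action} and nondegeneracy, exactly as you do. Your route instead recognizes~\eqref{eq:q-Heis-bilinear-form} as the canonical permanent (bosonic Fock) form on the symmetric algebra of the one-particle space $W=\Span\{p_{k,i}\}$ with metric $B(p_{k,i},p_{l,j})=\delta_{kl}[k\langle i,j\rangle]\frac{[k]}{k}$, and reduces everything to nondegeneracy of $B$, which is precisely the standing nonsingularity assumption, block by block. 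This is a complete and legitimate argument: the permanent expression is multilinear and separately symmetric in the two groups of arguments, so it descends to a basis-independent bilinear form on $S(W)\cong H^+$ agreeing with the paper's form on the basis $\{p_\blambda\}$; the congruence diagonalization over $\Q(q)$ is routine; and your evaluation against a diagonal one-particle Gram matrix, giving self-pairings $\prod_{k,a}M_{k,a}!\,d_{k,a}^{M_{k,a}}\neq 0$, is correct in characteristic zero. What your approach buys: it is induction-free, it produces an explicit orthogonal basis of Fock space, and it makes transparent that the hypothesis on $([k\langle i,j\rangle])$ enters exactly as nondegeneracy of the one-particle metric. What the paper's approach buys: it avoids setting up and justifying the basis-independent symmetric-algebra form (the one step you rightly flag as requiring care), working entirely with the given basis and the derivations $\varphi_{k,i}$ already introduced for Lemma~\ref{lem:pp-adjoint-action}.
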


\begin{proof}
  Recall that we have a natural grading $H^+ = \bigoplus_{n  \in \N} H_n^+$.  We prove that the form is nondegenerate by induction on the grading.  It is clearly nondegenerate on $H_0^+ = \kk$.  Now, assume that it is nondegenerate on $\bigoplus_{n = 0}^N H_n^+$ for some $N \in \N$.  Suppose for a moment that, for any nonzero $a \in H^+_{N+1}$, there exists $k \in \N_+$ and $i \in I$ such that $\varphi_{k,i}(a) \ne 0$.  Then we have $\varphi_{k,i}(a) \in \bigoplus_{n = 0}^N H_n^+$ and so, by the induction hypothesis, there exists $x \in H^+$ such that
  \[
    0 \ne \langle x, \varphi_{k,i}(a) \rangle = \langle p_{k,i}x, a \rangle,
  \]
  completing the inductive step.

  It remains to prove that, for any nonzero $a \in H_{N+1}^+$, $N \in \N$, there exists $k \in \N_+$ and $i \in I$ such that $\varphi_{k,i}(a) \ne 0$.  Suppose, on the contrary, that there exists a nonzero $a \in H_{N+1}^+$ such that $\varphi_{k,i}(a)=0$ for all $k \in \N_+$ and $i \in I$.  Write
  \[
    a = \sum_{\blambda \in \cP^I} a_\blambda p_\blambda.
  \]
  Fix $\bmu \in \cP^I$.  Considering the $p_\bmu$ coefficient of $\varphi_{k,i}(a)$, we must have, for $k \in \N_+$ and $i \in I$,
  \[
    0 = \sum_{j \in I} a_{\bmu \oplus_j k} (m_k(\mu^j)+1) [k \langle i,j \rangle] \frac{[k]}{k}.
  \]
  It then follows from the fact that the matrix $([k \langle i,j \rangle])_{i,j \in I}$ is nonsingular,
  that $a_{\bmu \oplus_j k} = 0$ for all $\bmu \in \cP^I$, $k \in \N_+$, and $j \in I$.  Since $a$ is homogeneous of positive degree, all $\blambda$ such that $a_\blambda$ is nonzero are of the form $\bmu \oplus_j k$ for some $\bmu \in \cP^I$, $k \in \N_+$ and $j \in I$.  Thus $a=0$, completing the proof by contradiction.  Equation~\eqref{eq:pp-adjoint-action} now follows immediately from Lemma~\ref{lem:pp-adjoint-action}.
\end{proof}

Since the bilinear form is nondegenerate and $\kk$ is a field, it is also a perfect pairing.  Thus, we can consider the Heisenberg double $\fh = \fh(H^+,H^-)$, with $H^-=H^+$.  For $n \in \N_+$ and $i \in I$, we let $p_{n,i}$ denote, as usual, the $n$-th power sum in the $i$-th factor of $H^+$, and let $p_{n,i}'$ denote the same element of $\Sy^{\otimes I}$, but considered as an element of $H^-$.

\begin{prop} \label{prop:q-Heis-p-presentation}
  The Heisenberg double $\fh$ is generated by $p_{n,i}$, $p_{n,i}'$, $n \in \N_+$, $i \in I$, with relations
  \begin{equation} \label{eq:q-Heis-p-presentation}
      p_{m,i} p_{n,j} = p_{n,j} p_{m,i},\quad p_{m,i}' p_{n,j}' = p_{n,j}' p_{m,i}',\quad
      p_{m,i}' p_{n,j} = p_{n,j} p_{m,i}' + \delta_{m,n} [n \langle i,j \rangle] \frac{[n]}{n}.
  \end{equation}
\end{prop}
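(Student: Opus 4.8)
The plan is to prove the presentation in three stages: verify that the stated relations hold in $\fh$, observe that the listed elements generate $\fh$, and then show these relations are complete via a normal-form (PBW) argument measured against the known basis of $\fh = H^+ \otimes H^-$.

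The two families of commutation relations are immediate. Both $H^+$ and $H^-$ are copies of the commutative algebra $\Sy^{\otimes I}$, and both embed as subalgebras of $\fh$ (via $a \mapsto a \# 1$ and $x \mapsto 1 \# x$); hence the $p_{n,i}$ commute among themselves, as do the $p'_{n,i}$. The content is the cross relation. Since the pairing is a $(1,0,0)$-twisted pairing, all twistings vanish and every $q$-exponent in the product~\eqref{eq:smash-product} is zero, so
\[
  (1 \# x)(b \# 1) = \sum_{(x)} \left(x_1\right)^*(b) \# x_2.
\]
Taking $x = p'_{m,i}$ and $b = p_{n,j}$ and using the primitivity $\Delta(p'_{m,i}) = p'_{m,i} \otimes 1 + 1 \otimes p'_{m,i}$ from~\eqref{eq:p-coproduct}, I obtain
\[
  p'_{m,i} p_{n,j} = \left(p'_{m,i}\right)^*(p_{n,j}) \# 1 + p_{n,j} \# p'_{m,i}.
\]
By Proposition~\ref{prop:q-Heis-form-nondegen} applied to the one-part partition $\lambda = (n)$ --- for which $m_m\big((n)\big) = \delta_{m,n}$ and $p_{(n) \ominus n,\, j} = 1$ --- the first summand is the scalar $\delta_{m,n} [n\langle i,j \rangle] \frac{[n]}{n}$, while the second is $p_{n,j} p'_{m,i}$ by the convention $ax = a \# x$. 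This is exactly~\eqref{eq:q-Heis-p-presentation}.

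Since $H^+$ is generated as a $\kk$-algebra by the power sums $p_{n,i}$ and $H^-$ by the $p'_{n,i}$, and their images together span $\fh$, these elements generate $\fh$. Letting $\mathfrak{a}$ denote the algebra with the given presentation, the relations just verified yield a surjection $\pi \colon \mathfrak{a} \twoheadrightarrow \fh$.

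The remaining, and main, point is injectivity of $\pi$. I would fix a total order on $\N_+ \times I$ and call a monomial in $\mathfrak{a}$ \emph{ordered} if it has the form $p_\blambda \, p'_\bmu$ with the $p$-factors sorted and placed to the left of the sorted $p'$-factors; such monomials are indexed by pairs $(\blambda,\bmu) \in \cP^I \times \cP^I$. Using the two commutation relations to sort within the $p$-block and within the $p'$-block, and the cross relation to move each $p'$ to the right of each $p$, any word rewrites as a $\kk$-linear combination of ordered monomials. The correction term in the cross relation is a scalar of strictly smaller degree, so the rewriting terminates --- formally by induction on total degree and on the number of $p'$-before-$p$ inversions, or by the diamond lemma --- and the ordered monomials span $\mathfrak{a}$. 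On the other hand $\pi$ sends the ordered monomial indexed by $(\blambda,\bmu)$ to $p_\blambda \# p'_\bmu$, and these form a $\kk$-basis of $\fh = H^+ \otimes H^-$ because $\{p_\blambda\}$ is a basis of $H^+$ and $\{p_\bmu\}$ a basis of $H^-$. Thus $\pi$ maps a spanning set of $\mathfrak{a}$ bijectively onto a basis of $\fh$, which forces the ordered monomials to be linearly independent in $\mathfrak{a}$ and $\pi$ to be an isomorphism. I expect the only delicate step to be the termination and confluence of this rewriting, though it becomes routine once one observes that the correction term strictly decreases degree.
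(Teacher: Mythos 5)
Your proposal is correct, and its computational core coincides with the paper's: you derive the cross relation from the product formula~\eqref{eq:smash-product} with all twistings trivial, the primitivity~\eqref{eq:p-coproduct}, and the adjoint-action formula of Proposition~\ref{prop:q-Heis-form-nondegen}, exactly as the paper does. Where you genuinely diverge is in how completeness of the relations is justified. The paper disposes of this in one sentence by appealing to the general principle that, for any twisted Heisenberg double, a complete set of relations is given by the relations in $H^+$, the relations in $H^-$, and the cross commutation relations between generating sets of the two factors; it offers no proof of that principle. You instead prove it in the case at hand: rewriting every word in $\mathfrak{a}$ into ordered monomials $p_\blambda p'_\bmu$, noting that $\pi$ carries these to the elements $p_\blambda \# p'_\bmu$, which form a $\kk$-basis of $\fh = H^+ \otimes H^-$, and concluding that the canonical surjection $\pi \colon \mathfrak{a} \twoheadrightarrow \fh$ is injective. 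Your route is longer but self-contained, and amounts to a proof (in this instance) of the PBW-type fact the paper takes for granted. One simplification worth noting: confluence of your rewriting system is not needed, and the "delicate step" you flag is only the routine termination argument. Spanning of $\mathfrak{a}$ by ordered monomials follows from termination alone, and linear independence of those monomials is imported from $\fh$ through $\pi$ rather than established intrinsically, so no diamond-lemma analysis of overlapping ambiguities is required.
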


\begin{proof}
  The $p_{n,i}$, $p_{n,i}'$ clearly generate $\fh$.  A complete set of relations for any twisted Heisenberg double is given by the relations in $H^+$, the relations in $H^-$, and the commutation relations between the elements of a generating set for $H^+$ and a generating set for $H^-$.  It thus suffices to compute the commutation relation between $p_{m,i}'$ and $p_{n,j}$ for $m,n \in \N$ and $i,j \in I$.  We have, by~\eqref{eq:smash-product}, \eqref{eq:p-coproduct}, and Proposition~\ref{prop:q-Heis-form-nondegen},
  \[
    p_{m,i}' p_{n,j} = p_{n,j} p_{m,i}' + p_{m,i}^*(p_{n,j}) = p_{n,j} p_{m,i}' + \delta_{mn} [n \langle i,j \rangle] \frac{[n]}{n}. \qedhere
  \]
\end{proof}

In the case that $A$ is a Cartan matrix of affine ADE type, the algebra with generators $p_{n,i}$, $p_{n,i}'$, $n \in \N_+$, $i \in I$, and relations given by~\eqref{eq:q-Heis-p-presentation} is sometimes called the \emph{quantum toroidal Heisenberg algebra}.  Thus, Proposition~\ref{prop:q-Heis-p-presentation} implies that the quantum toroidal Heisenberg algebra is the Heisenberg double of $\Sy^{\otimes I}$.  The Heisenberg double $\fh(H^+,H^-)$ depends, up to isomorphism, only on the Hopf algebra $H^+$.  The purpose of the bilinear form between $H^+$ and $H^-$ (e.g.\ the form~\eqref{eq:q-Heis-bilinear-form}) is simply to give the identification of $H^-$ with the Hopf algebra dual to $H^+$.  Different bilinear forms would lead to different presentations of the Heisenberg double, but the algebra itself is unchanged (up to isomorphism).  This observation leads to the following result.

\begin{cor}
  The quantum toroidal Heisenberg algebra is isomorphic to the usual infinite-dimensional Heisenberg algebra over $\Q(q)$.
\end{cor}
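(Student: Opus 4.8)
The plan is to use the observation immediately preceding the statement: the Heisenberg double depends, up to isomorphism, only on the Hopf algebra $H^+ = \Sy^{\otimes I}$, and not on the bilinear form used to identify $H^-$ with its dual. Concretely, I would make this precise by exhibiting an explicit change of the generating set in the presentation of Proposition~\ref{prop:q-Heis-p-presentation} that turns the relations~\eqref{eq:q-Heis-p-presentation} into the canonical commutation relations of the infinite-dimensional Heisenberg algebra.

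First I would record the shape of the presentation. The creation operators $p_{n,i}$ commute pairwise, the annihilation operators $p'_{n,i}$ commute pairwise, and the only nontrivial relations are $p'_{m,i} p_{n,j} = p_{n,j} p'_{m,i} + \delta_{mn} M^{(n)}_{ij}$, where $M^{(n)}$ is the $I \times I$ matrix over $\Q(q)$ with entries $M^{(n)}_{ij} = [n\langle i,j\rangle]\tfrac{[n]}{n}$. Since $\langle -,-\rangle$ is symmetric, each $M^{(n)}$ is symmetric; and since the matrix $([n\langle i,j\rangle])_{i,j}$ is nonsingular by the lemma at the start of this section and $\tfrac{[n]}{n} \in \Q(q)^\times$, each $M^{(n)}$ is invertible over $\Q(q)$. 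This last point is the only input with genuine content, and it is precisely why the standing nonsingularity hypothesis was imposed.

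The key step is then a block-diagonal change of basis, carried out independently in each degree $n$. Keeping $\tilde p_{n,i} = p_{n,i}$ and setting $\tilde p'_{n,i} = \sum_{k \in I} (M^{(n)})^{-1}_{ik}\, p'_{n,k}$, a one-line computation gives $\tilde p'_{m,i} \tilde p_{n,j} = \tilde p_{n,j} \tilde p'_{m,i} + \delta_{mn}\delta_{ij}$, while the $\tilde p_{n,i}$ and, separately, the $\tilde p'_{n,i}$ still commute among themselves. This transformation is invertible within each graded piece, its inverse being multiplication of the $p'_{n,i}$ by the matrices $M^{(n)}$, so $\{p_{n,i}, p'_{n,i}\}$ and $\{\tilde p_{n,i}, \tilde p'_{n,i}\}$ generate the same algebra and the two sets of relations are intertwined by an algebra isomorphism. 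The resulting relations are exactly the defining relations of the Heisenberg algebra whose creation/annihilation operators are indexed by the countably infinite set $\N_+ \times I$; choosing any bijection $\N_+ \times I \cong \N_+$ then identifies $\fh$ with the usual infinite-dimensional Heisenberg algebra over $\Q(q)$.

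I do not expect a serious obstacle: once the invertibility of the $M^{(n)}$ is in hand, the remainder is the routine verification that the change of basis is invertible and matches the two presentations. Alternatively, one can bypass the computation by noting that, as an (ungraded) Hopf algebra, $\Sy^{\otimes I}$ is a polynomial algebra on countably many primitive generators and is therefore isomorphic to $\Sy$, so the observation above identifies their Heisenberg doubles directly, the latter being the usual infinite-dimensional Heisenberg algebra.
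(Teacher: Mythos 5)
Your proof is correct, but your main argument takes a genuinely different route from the paper's; in fact, the ``alternative'' you mention only in your closing paragraph is essentially the paper's entire proof. The paper argues abstractly: it invokes the observation preceding the corollary (that $\fh(H^+,H^-)$ depends, up to isomorphism, only on the Hopf algebra $H^+$), the fact that $\Sy^{\otimes I} \cong \Sy$ as Hopf algebras, and the known identification of the Heisenberg double of $\Sy$ with the classical infinite-dimensional Heisenberg algebra. Your main argument instead works concretely with the presentation of Proposition~\ref{prop:q-Heis-p-presentation}: you invert the degree-$n$ Gram matrices $M^{(n)} = \bigl([n\langle i,j\rangle]\tfrac{[n]}{n}\bigr)_{i,j \in I}$ --- which is precisely where the standing nonsingularity hypothesis on $([k\langle i,j\rangle])_{i,j\in I}$ enters --- to renormalize the annihilation generators, reducing the relations to canonical form, and then relabel via a bijection $\N_+ \times I \cong \N_+$ (plus the trivial rescaling matching $\delta_{mn}$ against the normalization $n\delta_{mn}$ of Definition~\ref{def:lattice-Heis}, a point you gloss over but which is harmless). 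Each approach buys something. Yours is self-contained, produces an explicit isomorphism, and quietly sidesteps a subtlety in the abstract route: the isomorphism $\Sy^{\otimes I} \cong \Sy$ is \emph{not} graded (the graded pieces have different dimensions), so applying the ``depends only on $H^+$'' principle, which is formulated in the paper's graded framework, strictly speaking needs a word of justification --- for instance, that this isomorphism carries a homogeneous basis to a homogeneous basis, so it still induces an isomorphism of the graded duals and hence of the doubles. The paper's route buys brevity and conceptual clarity: it makes plain that no computation with the specific bilinear form \eqref{eq:q-Heis-bilinear-form} is needed once Proposition~\ref{prop:q-Heis-p-presentation} is established, and it generalizes immediately to any Hopf algebra abstractly isomorphic to $\Sy$.
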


\begin{proof}
  The usual infinite-dimensional Heisenberg algebra is the Heisenberg double of the ring $\Sy$ of symmetric functions (see, for example, \cite[\S6.2]{SY13}).  The result then follows immediately from Proposition~\ref{prop:q-Heis-p-presentation} and the fact that $\Sy^{\otimes I}$ is isomorphic to $\Sy$ as a Hopf algebra.
\end{proof}

In the remainder of this section we will find an integral form of the Heisenberg double $\fh$.  Such integral forms are especially important in categorification.

For a partition $\lambda$, define
\[
  Z_\lambda = \prod_{k \ge 1} [k]^{m_k(\lambda)} m_k(\lambda)!,
\]
where we remind the reader that $m_k(\lambda)$ is the number of parts of $\lambda$ equal to $k$.  Note that $Z_\lambda$ is a quantized analogue of the quantity $z_\lambda = \prod_{k \ge 1} k^{m_k(\lambda)} m_k(\lambda)!$ that plays an important role in the usual theory of symmetric functions (see~\cite[\S I.2]{Mac95}).  Then, for $n \in \N$ and $i \in I$, let
\begin{equation} \label{eq:h-def}
  h_{n,i} = \sum_{\lambda \vdash n} \frac{p_{\lambda,i}}{Z_\lambda},
\end{equation}
where $\lambda \vdash n$ indicates that $\lambda$ is a partition of $n$.  Then the $h_{n,i}$ are quantized analogues of the complete symmetric functions.  In particular, replacing $Z_\lambda$ by $z_\lambda$ in~\eqref{eq:h-def} gives precisely the complete symmetric functions.  We adopt the convention that $h_{n,i} = 0$ for $n < 0$ and $i \in I$.

\begin{lem}
  For $n \in \N_+$ and $i \in I$,
  \begin{equation} \label{eq:h-hp-expansion}
    nh_{n,i} = \sum_{r=1}^n \frac{r}{[r]} h_{n-r,i} p_{r,i}.
  \end{equation}
\end{lem}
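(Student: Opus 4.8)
The plan is to recognize \eqref{eq:h-hp-expansion} as a $q$-analogue of Newton's identity $nh_n = \sum_{r=1}^n p_r h_{n-r}$ relating the complete and power-sum symmetric functions, and to prove it by expanding the right-hand side via the definition \eqref{eq:h-def} of $h_{n,i}$ and then reindexing the resulting sum over partitions. Since every element in sight lies in the $i$-th tensor factor $\Sy$, which is commutative, the products $h_{n-r,i}p_{r,i}$ may be reordered freely, and $p_{r,i}p_{\mu,i} = p_{\mu\oplus r,i}$ for every $\mu \in \cP$.

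The key computational input I would isolate first is the behaviour of $Z_\lambda$ under adding a part. If $\lambda = \mu \oplus r$ then $m_r(\lambda) = m_r(\mu)+1$ while $m_k(\lambda)=m_k(\mu)$ for $k\neq r$, so directly from the definition of $Z_\lambda$ one obtains
\[
  Z_{\mu \oplus r} = [r]\, m_r(\mu\oplus r)\, Z_\mu .
\]
Substituting the definition of $h_{n-r,i}$ into the right-hand side of \eqref{eq:h-hp-expansion} and using this relation, each summand becomes
\[
  \frac{r}{[r]} h_{n-r,i} p_{r,i}
    = \frac{r}{[r]} \sum_{\mu \vdash n-r} \frac{p_{\mu \oplus r,i}}{Z_\mu}
    = r \sum_{\mu \vdash n-r} m_r(\mu \oplus r)\, \frac{p_{\mu\oplus r,i}}{Z_{\mu\oplus r}},
\]
so that the factor $[r]$ cancels cleanly.

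Finally I would reindex the double sum $\sum_{r=1}^n \sum_{\mu \vdash n-r}$ by the partition $\lambda = \mu \oplus r \vdash n$. For a fixed $\lambda$, the pairs $(r,\mu)$ with $\mu \oplus r = \lambda$ are exactly the pairs $(r, \lambda \ominus r)$ as $r$ ranges over the distinct part sizes occurring in $\lambda$, and the corresponding summand equals $r\, m_r(\lambda)\, p_{\lambda,i}/Z_\lambda$. Collecting terms gives
\[
  \sum_{r=1}^n \frac{r}{[r]} h_{n-r,i} p_{r,i}
    = \sum_{\lambda \vdash n} \Bigl( \sum_{r \ge 1} r\, m_r(\lambda) \Bigr) \frac{p_{\lambda,i}}{Z_\lambda}
    = \sum_{\lambda \vdash n} n\, \frac{p_{\lambda,i}}{Z_\lambda}
    = n\, h_{n,i},
\]
since $\sum_{r\ge 1} r\, m_r(\lambda)$ is just the sum of the parts of $\lambda$, namely $n$.

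The only point demanding genuine care is the bookkeeping of the multiplicity $m_r(\lambda)$: it enters once through the ratio $Z_{\mu\oplus r}/Z_\mu$ and then interacts with the reindexing, so that the weights must assemble into the partition identity $\sum_{r} r\, m_r(\lambda) = n$; getting this factor to combine correctly is the crux of the argument. As an independent consistency check, the same identity follows from the generating-function formula $\sum_{n\ge 0} h_{n,i}\, t^n = \exp\bigl(\sum_{k\ge 1} [k]^{-1} p_{k,i}\, t^k\bigr)$, which is immediate from \eqref{eq:h-def} and the definition of $Z_\lambda$, upon applying the operator $t\,\frac{d}{dt}$.
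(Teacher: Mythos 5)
Your proof is correct and is essentially the same argument as the paper's: the paper also expands $h_{n-r,i}$ via \eqref{eq:h-def}, uses the relation $Z_{\lambda\oplus r}=[r]\,m_r(\lambda\oplus r)\,Z_\lambda$ to absorb the factor $[r]$, and then reindexes over $\lambda\oplus r\vdash n$ using $\sum_r r\,m_r(\lambda)=n$. You merely spell out the reindexing step (which the paper compresses into its final equality) and add an optional generating-function check.
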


\begin{proof}
  We have
  \[
    \sum_{r=1}^n \frac{r}{[r]} h_{n-r,i} p_{r,i} = \sum_{r=1}^n \sum_{\lambda \vdash (n-r)} \frac{r}{[r]Z_\lambda} p_{\lambda, i} p_{r,i}
    = \sum_{r=1}^n \sum_{\lambda \vdash (n-r)} r m_r(\lambda \oplus r) \frac{p_{\lambda \oplus r, i}}{Z_{\lambda \oplus r}}  = n h_{n,i}. \qedhere
  \]
\end{proof}

\begin{lem} \label{lem:h-coproduct}
  For all $n \in \N$ and $i \in I$,
  \[
    \Delta(h_{n,i}) = \sum_{k=0}^n h_{k,i} \otimes h_{n-k,i},\quad n \in \N,\ i \in I.
  \]
\end{lem}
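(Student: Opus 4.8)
The plan is to prove the identity by induction on $n$, using the recursion~\eqref{eq:h-hp-expansion} together with the fact that $\Delta$ is an algebra homomorphism and that the power sums are primitive~\eqref{eq:p-coproduct}. Here $\Sy^{\otimes I}$ is an ordinary (untwisted) Hopf algebra, so $\Delta(ab) = \Delta(a)\Delta(b)$ for the componentwise product on $H^+ \otimes H^+$, and all the elements in play lie in the $i$-th tensor factor $\Sy$. The base case $n=0$ is immediate, since $h_{0,i} = 1$ and $\Delta(1) = 1 \otimes 1 = h_{0,i} \otimes h_{0,i}$.

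For the inductive step, I would apply $\Delta$ to both sides of~\eqref{eq:h-hp-expansion}. Since $\Delta$ is multiplicative and $\Delta(p_{r,i}) = p_{r,i} \otimes 1 + 1 \otimes p_{r,i}$, this gives
\[
  n \Delta(h_{n,i}) = \sum_{r=1}^n \frac{r}{[r]} \Delta(h_{n-r,i}) \left( p_{r,i} \otimes 1 + 1 \otimes p_{r,i} \right).
\]
Substituting the inductive hypothesis $\Delta(h_{n-r,i}) = \sum_{k=0}^{n-r} h_{k,i} \otimes h_{n-r-k,i}$ and expanding yields two double sums: one in which $p_{r,i}$ is absorbed into the left tensor factor, and one in which it is absorbed into the right.

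The key step is to resum each of these two double sums using~\eqref{eq:h-hp-expansion} a second time. In the first sum, collecting terms according to the degree $a = k + r$ of the left factor, the inner sum over $r$ becomes exactly $\sum_{r=1}^{a} \frac{r}{[r]} h_{a-r,i} p_{r,i} = a\, h_{a,i}$, so this sum contributes $\sum_{a=1}^n a\, h_{a,i} \otimes h_{n-a,i}$. Symmetrically, fixing the degree $a = k$ of the left factor in the second sum turns the inner sum over $r$ into $\sum_{r=1}^{n-a} \frac{r}{[r]} h_{n-a-r,i} p_{r,i} = (n-a) h_{n-a,i}$, so the second sum contributes $\sum_{a=0}^{n-1} (n-a)\, h_{a,i} \otimes h_{n-a,i}$. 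Adding the two, the coefficient of $h_{a,i} \otimes h_{n-a,i}$ is $n$ for every $0 \le a \le n$ (the boundary cases $a=0$ and $a=n$ each receiving a contribution from only one of the sums, but still with coefficient $n$). Dividing by $n$, which is permissible since $\kk = \Q(q)$ has characteristic zero, yields the claimed formula.

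I expect the reindexing in the previous paragraph to be the only real obstacle: one must match the ranges of summation so that each inner sum is precisely of the form appearing in~\eqref{eq:h-hp-expansion}, and check that the boundary terms $a=0$ and $a=n$ carry the correct coefficient. An essentially equivalent route avoids the reindexing altogether: the generating function $H_i(t) = \sum_{n \ge 0} h_{n,i} t^n$ equals $\exp\!\bigl( \sum_{k \ge 1} \tfrac{p_{k,i}}{[k]} t^k \bigr)$ in $H^+[[t]]$, and applying $\Delta$ to this exponential of primitive elements factors it as $H_i(t) \otimes H_i(t)$; comparing coefficients of $t^n$ then gives the result. I would present the inductive argument as the main proof, since it stays within the polynomial algebra and uses only results already established.
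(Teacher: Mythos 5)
Your proof is correct, but it takes a genuinely different route from the paper. The paper proves the lemma by direct computation: it expands $h_{n,i} = \sum_{\lambda \vdash n} p_{\lambda,i}/Z_\lambda$, writes $\Delta(p_{\lambda,i})$ as the product $\prod_k (p_{\lambda_k,i} \otimes 1 + 1 \otimes p_{\lambda_k,i})$ of primitives, and then counts the ways a partition $\lambda$ splits as $\lambda' \oplus \lambda''$, using the multiplicity identity
\[
  \frac{1}{Z_\lambda} \prod_{k \ge 1} \frac{m_k(\lambda)!}{m_k(\lambda')!\, m_k(\lambda'')!} = \frac{1}{Z_{\lambda'} Z_{\lambda''}}
\]
to resum into $\sum_k h_{k,i} \otimes h_{n-k,i}$. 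Your induction on $n$ via the recursion~\eqref{eq:h-hp-expansion} avoids this partition-splitting combinatorics entirely, at the cost of a strong induction, a reindexing argument, and division by $n$ (harmless over $\kk = \Q(q)$). Both uses of~\eqref{eq:h-hp-expansion} are legitimate since that lemma is established independently just before this one, so there is no circularity; and your appeal to $\Delta$ being multiplicative for the componentwise product is justified because the twistings in this section are trivial. Your reindexing checks out: the left-absorbed sum contributes $\sum_{a=1}^n a\, h_{a,i} \otimes h_{n-a,i}$, the right-absorbed sum contributes $\sum_{a=0}^{n-1} (n-a)\, h_{a,i} \otimes h_{n-a,i}$, and the boundary terms $a = 0, n$ each carry coefficient $n$ as claimed. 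Your generating-function remark, $\sum_n h_{n,i} t^n = \exp\bigl(\sum_{k \ge 1} p_{k,i} t^k / [k]\bigr)$, is in fact the closest relative of the paper's argument: expanding that exponential and applying $\Delta$ termwise reproduces exactly the paper's multiplicity count, so the two approaches are the inductive and the closed-form faces of the same identity.
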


\begin{proof}
  We have
  \begin{multline*}
      \Delta(h_{n,i}) = \sum_{\lambda \vdash n} \frac{1}{Z_\lambda} \Delta(p_{\lambda,i})
      = \sum_{\lambda \vdash n} \frac{1}{Z_\lambda} \prod_{k=1}^{\ell(\lambda)} (p_{\lambda_k,i} \otimes 1 + 1 \otimes p_{\lambda_k,i}) \\
      = \sum_{\lambda \vdash n} \sum_{\lambda' \oplus \lambda'' = \lambda} \frac{1}{Z_\lambda} \prod_{k \ge 1} \frac{m_k(\lambda)!}{m_k(\lambda')! m_k(\lambda'')!} p_{\lambda',i} \otimes p_{\lambda'',i}
      = \sum_{\lambda \vdash n} \sum_{\lambda' \oplus \lambda'' = \lambda} \frac{1}{Z_{\lambda'} Z_{\lambda''}} p_{\lambda',i} \otimes p_{\lambda'',i} \\
      = \sum_{k=0}^n h_{k,i} \otimes h_{n-k,i}. \qedhere
  \end{multline*}
\end{proof}

\begin{lem} \label{lem:ph-adjoint-action}
  For all $n,k \in \N$ and $i,j \in I$, we have
  \[
    p_{k,i}^*(h_{n,j}) = \frac{[k \langle i, j \rangle]}{k} h_{n-k,j}.
  \]
\end{lem}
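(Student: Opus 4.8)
The plan is to compute the adjoint action $p_{k,i}^*$ directly on the power-sum expansion of $h_{n,j}$. By Proposition~\ref{prop:q-Heis-form-nondegen} the operator $p_{k,i}^*$ coincides with the derivation $\varphi_{k,i}$, and $h_{n,j}=\sum_{\lambda\vdash n}Z_\lambda^{-1}p_{\lambda,j}$ is a $\kk$-linear combination of single-color power-sum products. Hence linearity together with~\eqref{eq:pp-adjoint-action} immediately gives
\[
  p_{k,i}^*(h_{n,j}) = [k\langle i,j\rangle]\frac{[k]}{k}\sum_{\lambda\vdash n}\frac{m_k(\lambda)}{Z_\lambda}\,p_{\lambda\ominus k,j},
\]
where only partitions $\lambda$ with a part equal to $k$ contribute. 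So the whole problem reduces to a combinatorial identity in the power-sum basis.

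Next I would reindex the sum by $\mu=\lambda\ominus k$, so that $\lambda=\mu\oplus k$ runs over the partitions of $n$ obtained by adjoining a part $k$ to some $\mu\vdash(n-k)$. The one bookkeeping step that matters is to compare $Z_{\mu\oplus k}$ with $Z_\mu$: adjoining a part $k$ raises only the multiplicity $m_k$, from $m_k(\mu)$ to $m_k(\mu)+1$, so the definition of $Z_\lambda$ yields $Z_{\mu\oplus k}=[k]\,(m_k(\mu)+1)\,Z_\mu$, while $m_k(\mu\oplus k)=m_k(\mu)+1$. Substituting both identities, the factor $m_k(\mu)+1$ cancels and one factor $[k]$ cancels against the prefactor, leaving
\[
  p_{k,i}^*(h_{n,j}) = \frac{[k\langle i,j\rangle]}{k}\sum_{\mu\vdash(n-k)}\frac{p_{\mu,j}}{Z_\mu} = \frac{[k\langle i,j\rangle]}{k}\,h_{n-k,j},
\]
which is the claim. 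The degenerate range $k>n$ is automatic, since then no $\lambda\vdash n$ has a part $k$, so the left side vanishes, and $h_{n-k,j}=0$ by the convention $h_{m,j}=0$ for $m<0$. I do not expect a genuine obstacle: the only substantive point is the cancellation of the combinatorial factors, i.e.\ verifying $Z_{\mu\oplus k}=[k](m_k(\mu)+1)Z_\mu$, and the remainder is linearity and reindexing.

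As an alternative that stays closer to the recursions proved just above, one could induct on $n$ using~\eqref{eq:h-hp-expansion}. Applying the derivation $p_{k,i}^*$ to $nh_{n,j}=\sum_{r=1}^n\frac{r}{[r]}h_{n-r,j}p_{r,j}$, evaluating $p_{k,i}^*(p_{r,j})=\delta_{kr}[k\langle i,j\rangle]\frac{[k]}{k}$ on the single power sum, and invoking the inductive hypothesis on each $h_{n-r,j}$, one finds that re-summing via~\eqref{eq:h-hp-expansion} applied to $h_{n-k,j}$ collapses the right-hand side to $n\cdot\frac{[k\langle i,j\rangle]}{k}h_{n-k,j}$; dividing by $n$ finishes the induction. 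I would nonetheless present the direct computation as the main argument, since it avoids the induction and a separate base case.
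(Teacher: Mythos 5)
Your proof is correct and is essentially the paper's own argument: both expand $h_{n,j}$ via \eqref{eq:h-def}, apply $p_{k,i}^*(p_{\lambda,j}) = m_k(\lambda)[k\langle i,j\rangle]\frac{[k]}{k} p_{\lambda\ominus k,j}$ from Proposition~\ref{prop:q-Heis-form-nondegen}, and use the cancellation $\frac{m_k(\lambda)[k]}{Z_\lambda} = \frac{1}{Z_{\lambda\ominus k}}$ (which you phrase equivalently as $Z_{\mu\oplus k}=[k](m_k(\mu)+1)Z_\mu$ after reindexing) to recognize the resulting sum as $\frac{[k\langle i,j\rangle]}{k}h_{n-k,j}$. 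Your explicit treatment of the degenerate case $k>n$ is a small additional care the paper leaves implicit.
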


\begin{proof}
  By~\eqref{eq:h-def} and Proposition~\ref{prop:q-Heis-form-nondegen}, we have
  \begin{multline*}
    p_{k,i}^*(h_{n,j}) = \sum_{\lambda \vdash n} \frac{1}{Z_\lambda} p_{k,i}^*(p_{\lambda,j})
    = \sum_{\lambda \vdash n} \frac{1}{Z_\lambda} m_k(\lambda) [k\langle i,j \rangle] \frac{[k]}{k} p_{\lambda \ominus k, j} \\
    = \sum_{\lambda \vdash n} \frac{1}{Z_{\lambda \ominus k}} \frac{[k \langle i,j \rangle]}{k} p_{\lambda \ominus k, j}
    = \frac{[k \langle i,j \rangle]}{k} h_{n-k,j}. \qedhere
  \end{multline*}
\end{proof}

\begin{lem} \label{lem:hh-adjoint-action}
  For all $k, n \in \N$ and $i,j \in I$, we have
  \begin{gather*}
    h_{k,i}^*(h_{n,j}) = [k+1] h_{n-k,j},\quad \text{when } \langle i,j \rangle = 2, \\
    h_{k,i}^*(h_{n,j}) =
    \begin{cases}
      h_{n-k,i} & \text{if } k=0,1, \\
      0 & \text{if } k > 1,
    \end{cases}
    \quad \text{when } \langle i,j \rangle = -1, \\
    h_{k,i}^*(h_{n,j}) =
    \begin{cases}
      h_{n,j} & \text{if } k=0, \\
      0 & \text{if } k > 0,
    \end{cases}
    \quad \text{when } \langle i,j \rangle = 0.
  \end{gather*}
\end{lem}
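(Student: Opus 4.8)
The plan is to first collapse $h_{k,i}^*(h_{n,j})$ to a single scalar multiple of $h_{n-k,j}$, and then to evaluate that scalar through a recurrence in $k$.

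First I would use the coproduct of $h_{n,j}$. Since the pairing \eqref{eq:q-Heis-bilinear-form} is a $(1,0,0)$-twisted pairing, it is untwisted ($\gamma' = 0$ and the twist parameter is $1$), so the left regular action reduces to $\pR{x}^*(a) = \sum_{(a)} \langle x, a_2 \rangle\, a_1$. Taking $x = h_{k,i}$, $a = h_{n,j}$ and substituting $\Delta(h_{n,j}) = \sum_{l=0}^n h_{l,j} \otimes h_{n-l,j}$ from Lemma~\ref{lem:h-coproduct} gives
\[
  h_{k,i}^*(h_{n,j}) = \sum_{l=0}^n \langle h_{k,i}, h_{n-l,j} \rangle\, h_{l,j}.
\]
Since the pairing vanishes between homogeneous components of different degree, only the term $l = n-k$ survives, so
\[
  h_{k,i}^*(h_{n,j}) = \langle h_{k,i}, h_{k,j} \rangle\, h_{n-k,j}.
\]
Hence everything reduces to the scalar $b_k := \langle h_{k,i}, h_{k,j} \rangle$, which depends only on $k$ and $c := \langle i,j \rangle$.

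Next I would derive a recurrence for $b_k$. Applying the recursion \eqref{eq:h-hp-expansion} to $h_{k,j}$ and using commutativity of $H^+$, symmetry of the form, and the fact that $p_{r,j}^*$ is adjoint to multiplication by $p_{r,j}$, I obtain
\[
  k\, b_k = \sum_{r=1}^k \frac{r}{[r]} \langle h_{k,i}, p_{r,j}\, h_{k-r,j} \rangle = \sum_{r=1}^k \frac{r}{[r]} \langle p_{r,j}^*(h_{k,i}), h_{k-r,j} \rangle.
\]
By Lemma~\ref{lem:ph-adjoint-action}, $p_{r,j}^*(h_{k,i}) = \frac{[rc]}{r} h_{k-r,i}$ (as $\langle j,i \rangle = c$), so
\[
  k\, b_k = \sum_{r=1}^k \frac{[rc]}{[r]}\, b_{k-r}, \qquad b_0 = 1.
\]

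Finally I would solve this recurrence in the three cases. For $c = 0$ we have $[rc] = [0] = 0$, so $b_k = 0$ for all $k \ge 1$, which is the $\langle i,j \rangle = 0$ case. For $c = -1$, the identity $[-r] = (-1)^{r+1}[r]$ gives $[rc]/[r] = (-1)^{r+1}$, and a short induction shows $b_0 = b_1 = 1$ and $b_k = 0$ for $k > 1$ (once $b_2, \dots, b_{k-1}$ vanish, only the $r = k-1$ and $r = k$ terms remain, and $(-1)^{k} + (-1)^{k+1} = 0$). For $c = 2$, the factorization $[2r] = (q^{-r} + q^r)[r]$ gives $[rc]/[r] = q^{-r} + q^r$, and I would verify that $b_k = [k+1]$ solves the recurrence by checking the identity $k[k+1] = \sum_{r=1}^k (q^{-r} + q^r)[k-r+1]$; expanding $[m] = (q^{-m} - q^m)/(q^{-1} - q)$, the mixed powers cancel in pairs and the surviving terms sum to $k(q^{-(k+1)} - q^{k+1})$, as required. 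Together with the reduction above, this yields all three formulas. The only genuine computation is this last $q$-identity; the reduction and the cases $c \in \{0,-1\}$ are immediate.
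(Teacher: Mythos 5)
Your proof is correct and takes essentially the same route as the paper's: both rest on the recurrence $k\,b_k = \sum_{r=1}^k \frac{[r\langle i,j\rangle]}{[r]}\, b_{k-r}$ obtained from \eqref{eq:h-hp-expansion} and Lemma~\ref{lem:ph-adjoint-action}, followed by the same three-case induction and the same cancellation of $q$-powers when $\langle i,j\rangle = 2$; your only variation is to first collapse $h_{k,i}^*(h_{n,j})$ to the scalar multiple $\langle h_{k,i},h_{k,j}\rangle\, h_{n-k,j}$ via Lemma~\ref{lem:h-coproduct} and degree considerations, whereas the paper runs the induction directly on the operators $h_{k,i}^*$. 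One small remark: your argument (like the paper's own proof, which yields $h_{0,i}^*(h_{n-1,j}) = h_{n-1,j}$) produces $h_{n-k,j}$ in the $\langle i,j\rangle = -1$ case, so the subscript $i$ in that line of the lemma's statement is a typo for $j$.
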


\begin{proof}
  We prove the result by induction on $k$.  The result is clear for $k=0$.  Now, for $r \in \N_+$, note that
  \[
    \frac{[r \langle i,j \rangle]}{[r]} =
    \begin{cases}
      \frac{q^{-r\langle i,j \rangle} - q^{r\langle i,j \rangle}}{q^{-r} - q^r} & \text{if } \langle i,j \rangle \ge 0, \\
      (-1)^{r\langle i,j \rangle+1} \, \frac{q^{-r\langle i,j \rangle} - q^{r\langle i,j \rangle}}{q^{-r} - q^r} & \text{if } \langle i,j \rangle < 0.
    \end{cases}
  \]
  For $k > 0$, we have, by~\eqref{eq:h-hp-expansion} and Lemma~\ref{lem:ph-adjoint-action},
  \begin{align*}
    h_{k,i}^* (h_{n,j}) &= \frac{1}{k} \sum_{r=1}^k \frac{r}{[r]} h_{k-r,i}^* p_{r,i}^* (h_{n,j}) \\
    &= \frac{1}{k} \sum_{r=1}^k \frac{[r\langle i,j \rangle]}{[r]} h_{k-r,i}^*(h_{n-r,j}).
  \end{align*}
  The case $\langle i, j \rangle=0$ follows immediately.  If $\langle i,j \rangle = 2$, then
  \begin{align*}
    h_{k,i}^* (h_{n,j}) &= \frac{1}{k} \sum_{r=1}^k (q^{-r} + q^r) h_{k-r,i}^*(h_{n-r,j}) \\
    &= \frac{1}{k} \sum_{r=1}^k (q^{-r} + q^r) [k-r+1] h_{n-k,j} \\
    &= \frac{1}{k} \sum_{r=1}^k (q^{-r} + q^r) \frac{q^{-(k-r+1)} - q^{k-r+1}}{q^{-1}-q} h_{n-k,j} \\
    &= \frac{1}{k}\frac{1}{q^{-1}-q} \sum_{r=1}^k \left( q^{-k-1} + q^{-k+2r-1} - q^{k-2r+1} - q^{k+1} \right) h_{n-k,j} \\
    &= \frac{q^{-k-1} - q^{k+1}}{q^{-1} - q} h_{n-k,j} = [k+1] h_{n-k,j},
  \end{align*}
  where we have used the inductive hypothesis in the second equality.  If $\langle i,j \rangle = -1$, then
  \[
    h_{1,i}^*(h_{n,j}) = h_{0,i}^*(h_{n-1,j}) = h_{n-1,j},
  \]
  and, for $k > 1$,
  \[
    h_{k,i}^* (h_{n,j}) = \frac{1}{k} \sum_{r=1}^k (-1)^{-r} h_{k-r,i}^*(h_{n-r,j}) = 0,
  \]
  where we have used the inductive hypothesis in the second equality.
\end{proof}

For $n \in \N$ and $i \in I$, we let $h_{n,i}$ denote the usual element of $H^+$ defined by~\eqref{eq:h-def} and let $h_{n,i}'$ denote the same element of $\Sy^{\otimes I}$, but considered as an element of $H^-$.

\begin{prop} \label{prop:q-Heis-relations}
  If $A$ is a Cartan matrix of finite or affine ADE type other than type $A_2^{(1)}$, then the Heisenberg double $\fh(H^+,H^-)$ is generated by $h_{n,i}, h_{n,i}'$, $n \in \N$, $i \in I$, with relations
  \begin{gather}
    h_{n,i} h_{m,j} = h_{m,j} h_{n,i} \quad \text{for all } n,m \in \N,\ i,j \in I, \label{eq:q-Heis-hh-relation} \\
    h_{n,i}' h_{m,j}' = h_{m,j}' h_{n,i}' \quad \text{for all } n,m \in \N,\ i,j \in I, \\
    h_{n,i}' h_{m,i} = \sum_{k \ge 0} [k+1] h_{m-k,i} h_{n-k,i}' \quad \text{for all } n,m \in \N,\ i \in I, \\
    h_{n,i}' h_{m,j} = h_{m,j} h_{n,i}' + h_{m-1,j} h_{n-1,i}' \quad \text{for all } n,m \in \N,\ i,j \in I,\ \langle i, j \rangle = -1, \label{eq:q-Heis-ij-connected} \\
    h_{n,i}' h_{m,j} = h_{m,j} h_{n,i}' \quad \text{for all } n,m \in \N,\ i,j \in I,\ \langle i,j \rangle = 0. \label{eq:q-Heis-ij-not-connected}
  \end{gather}
\end{prop}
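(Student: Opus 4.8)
The plan is to reduce to the general presentation principle already used in the proof of Proposition~\ref{prop:q-Heis-p-presentation}: a complete set of relations for $\fh(H^+,H^-)$ consists of the relations in $H^+$, the relations in $H^-$, and the commutation relations rewriting a product $xb$ with $x\in H^-$, $b\in H^+$ in the normal form $H^+\# H^-$. Thus the work divides into two parts: identifying the $h_{n,i}$ (resp.\ $h_{n,i}'$) as a set of \emph{free} generators of $H^+$ (resp.\ $H^-$), so that \eqref{eq:q-Heis-hh-relation} and its analogue for the $h_{n,i}'$ exhaust the internal relations of each factor; and computing the single family of cross relations between the $h_{n,i}'$ and the $h_{m,j}$.

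For the first part I would read off from \eqref{eq:h-hp-expansion} that
\[
  h_{n,i} = \tfrac{1}{[n]}\,p_{n,i} + \textstyle\sum_{r=1}^{n-1}\tfrac{r}{n[r]}\,h_{n-r,i}p_{r,i},
\]
so $h_{n,i}$ agrees with $\tfrac{1}{[n]}p_{n,i}$ modulo decomposable elements. Since $H^+=\Sy^{\otimes I}=\kk[p_{n,i} : n\in\N_+,\ i\in I]$ is a polynomial algebra on the algebraically independent power sums, this triangularity shows the $h_{n,i}$ are again a free polynomial generating set; hence $H^+\cong\kk[h_{n,i}]$ and its only relations are commutativity. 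The identical argument applies to $H^-$ and the $h_{n,i}'$. Note that this part is insensitive to the choice of $A$.

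For the second part, the twistings here are trivial (the pairing \eqref{eq:q-Heis-bilinear-form} is $(1,0,0)$-twisted and $H^\pm=\Sy^{\otimes I}$ are untwisted Hopf algebras), so the product \eqref{eq:smash-product} collapses to $(a\#x)(b\#y)=\sum_{(x)}a\,\pR{x_1}^*(b)\#x_2y$. Taking $a=y=1$, $x=h_{n,i}'$, $b=h_{m,j}$, and inserting the coproduct $\Delta(h_{n,i})=\sum_{k=0}^n h_{k,i}\otimes h_{n-k,i}$ of Lemma~\ref{lem:h-coproduct}, I obtain
\[
  h_{n,i}'\,h_{m,j}=\sum_{k=0}^{n} h_{k,i}^*(h_{m,j})\,h_{n-k,i}'.
\]
Under the hypothesis on $A$ the form $\langle i,j\rangle$ takes only the values $2$ (when $i=j$), $-1$ (adjacent nodes), and $0$ (non-adjacent nodes), which are exactly the three cases of Lemma~\ref{lem:hh-adjoint-action}. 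Substituting its values gives the remaining three relations: $\langle i,j\rangle=2$ produces the full sum $\sum_{k\ge0}[k+1]h_{m-k,i}h_{n-k,i}'$; $\langle i,j\rangle=-1$ retains only the $k=0,1$ terms, yielding $h_{m,j}h_{n,i}'+h_{m-1,j}h_{n-1,i}'$ of \eqref{eq:q-Heis-ij-connected}; and $\langle i,j\rangle=0$ retains only $k=0$, giving \eqref{eq:q-Heis-ij-not-connected}.

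The main point requiring care is the freeness in the first part, i.e.\ confirming that the $h_{n,i}$ are algebraically independent so that no hidden relation among them is omitted from the presentation; the rest is either a citation --- the nonsingularity hypothesis ensures via Proposition~\ref{prop:q-Heis-form-nondegen} that the pairing is nondegenerate, hence perfect since $\kk=\Q(q)$ is a field, so that $\fh$ with $H^-=H^+$ is defined --- or a direct substitution into the already-established Lemmas~\ref{lem:h-coproduct} and~\ref{lem:hh-adjoint-action}.
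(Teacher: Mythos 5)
Your proposal is correct and takes essentially the same approach as the paper: the same presentation principle (relations in $H^+$, relations in $H^-$, plus the cross commutation relations), with the cross relations obtained exactly as in the paper from~\eqref{eq:smash-product}, Lemma~\ref{lem:h-coproduct}, and the case analysis of Lemma~\ref{lem:hh-adjoint-action}. The only difference is that you make explicit, via the triangularity coming from~\eqref{eq:h-hp-expansion}, the generation and algebraic-independence argument that the paper omits as being analogous to the classical fact that the complete symmetric functions freely generate $\Sy$.
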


\begin{proof}
  The proof of the fact that $\fh = \fh(H^+,H^-)$ is generated by $h_{n,i}, h_{n,i}'$, $n \in \N$, $i \in I$, is analogous to the proof that the ring of symmetric functions is generated by the complete symmetric functions and will be omitted.  A complete set of relations for any twisted Heisenberg double is given by the relations in $H^+$, the relations in $H^-$, and the commutation relations between the elements of a generating set for $H^+$ and a generating set for $H^-$.  It thus suffices to compute the commutation relation between $h_{n,i}'$ and $h_{m,j}$ for $n,m \in \N$ and $i,j \in I$.  For $n,m \in \N$ and $i \in I$, we have, by~\eqref{eq:smash-product}, Lemma~\ref{lem:h-coproduct}, and Lemma~\ref{lem:hh-adjoint-action},
  \[
    h_{n,i}' h_{m,i} = \sum_{k \ge 0} h_{k,i}^*(h_{m,i}) h_{n-k,i}'
    = \sum_{k \ge 0} [k+1] h_{m-k,i} h_{n-k,i}'.
  \]
  The proofs of the remaining relations~\eqref{eq:q-Heis-ij-connected} and~\eqref{eq:q-Heis-ij-not-connected} are analogous.
\end{proof}

\begin{rem}
  In the case that $A$ is a Cartan matrix of affine ADE type other than type $A_2^{(1)}$, Proposition~\ref{prop:q-Heis-relations} recovers the presentation of the quantum toroidal Heisenberg algebra given in~\cite[\S2.2]{CL12} after one identifies $h_{n,i}$ and $h_{n,i}'$ with the generators $p_i^{(n)}$ and $q_i^{(n)}$ (respectively) of \cite[\S2.2]{CL12}.  One could also use the above methods to recover the relations of~\cite[\S2.2.2]{CL12}.  The generators $p_i^{(1^n)}$ and $q_i^{(1^n)}$ of that reference correspond to $q$-deformed analogues of the $n$-th elementary symmetric functions in $H^+$ and $H^-$, respectively.
\end{rem}

\begin{rem} \label{rem:q-Heis-integral-form}
  Note that all of the coefficients in the relations in Proposition~\ref{prop:q-Heis-relations} lie in $\Z[q,q^{-1}]$.  Thus, one could consider the $\Z[q,q^{-1}]$-algebra with generators $h_{n,i}, h_{n,i}'$, $n \in \N$, $i \in I$ and relations~\eqref{eq:q-Heis-hh-relation}--\eqref{eq:q-Heis-ij-not-connected}.  This is an integral form of the quantum toroidal Heisenberg algebra.
\end{rem}

%
\section{Lattice Heisenberg algebras} \label{sec:lattice-Heis}
%

Suppose $L$ is a finitely-generated free $\Z$-module with a symmetric bilinear form $\langle -, - \rangle_L \colon L \times L \to \Z$.  Fix a basis $v_1,\dotsc,v_\ell$ of $L$.

\begin{defin}[Lattice Heisenberg algebra] \label{def:lattice-Heis}
  The \emph{lattice Heisenberg algebra} $\fh^L$ associated to $L$ is the unital $\Q$-algebra with generators $p_{i,n}$, $i \in \{1,\dotsc,\ell\}$, $n \in \Z \setminus \{0\}$, and relations
  \[
    p_{i,n} p_{j,m} = p_{j,m} p_{i,n} + n \delta_{n,-m} \langle v_i, v_j \rangle_L,\quad i,j \in \{1,\dotsc,\ell\},\ n,m \in \Z \setminus \{0\}.
  \]
  Note that when $L=\Z$ and $\langle n, m \rangle_L = nm$, then $\fh^L$ is the usual infinite-dimensional Heisenberg algebra $\fh_\textup{classical}$.
\end{defin}

Let $I = \{1,\dotsc,\ell\}$ and define $\langle -, - \rangle \colon I \times I \to \Z$ by $\langle i,j \rangle = \langle v_i, v_j \rangle_L$ for $i,j \in I$.  Then define $H^+$, $p_{n,i}$, $p_{\lambda,i}$, and $p_\blambda$ for $n \in \N$, $i \in I$, $\lambda \in \cP$, $\blambda \in \cP^I$, as in Section~\ref{sec:q-Heis} (except that we work over $\Q$ instead of $\Q(q)$).   We define a symmetric bilinear form on $H^+$ by
\begin{equation}  \label{eq:lattice-Heis-bilinear-form}
  \langle p_\blambda, p_\bmu \rangle = \delta_{\ell(\underline{\blambda}), \ell(\underline{\bmu})} \sum_{\sigma \in S_{\ell(\underline{\blambda})}} \prod_{r=1}^{\ell(\underline{\blambda})} \delta_{\prt \left( \underline{\blambda}_r \right), \prt \left( \underline{\bmu}_{\sigma(r)} \right)} \prt \left( \underline{\blambda}_r \right) \left\langle \clr\left(\underline{\blambda}_r\right), \clr\left(\underline{\bmu}_{\sigma(r)}\right) \right\rangle.
\end{equation}
Set $H^- = H^+$ and view $\langle -, - \rangle$ as a form $H^- \otimes H^+ \to \kk$.

\begin{lem} \label{lem:lattice-Heis-double-p-gens}
  The Heisenberg double $\fh(H^+,H^-)$ is generated by
  \begin{gather*}
    p_{n,i} = p_{n,i} \in H^+,\ p_{n,i}' = p_{n,i} \in H^-,\quad n \in \N,\ i=1,\dotsc,\ell.
  \end{gather*}
  The relations amongst these generators are
  \[
    p_{m,i} p_{n,j} = p_{n,j} p_{m,i},\quad p_{m,i}' p_{n,j}' = p_{n,j}' p_{m,i}',\quad p'_{m,i} p_{n,j} = p_{n,j} p'_{m,i} + n \delta_{m,n} \langle v_i, v_j \rangle_L.
  \]
\end{lem}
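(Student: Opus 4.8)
The plan is to follow the proof of Proposition~\ref{prop:q-Heis-p-presentation} essentially verbatim, since the present situation is the $q=1$ degeneration of the quantum Heisenberg setting of Section~\ref{sec:q-Heis}, with the scalar $[k\langle i,j\rangle]\tfrac{[k]}{k}$ appearing there replaced throughout by $k\langle v_i,v_j\rangle_L$.

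First I would record the inputs needed to apply the Heisenberg double machinery. The power sums $p_{n,i}$ generate $\Sy^{\otimes I}$ as a $\Q$-algebra, so $\fh(H^+,H^-)$ is generated by the $p_{n,i}$ and $p'_{n,i}$; and since $\Sy^{\otimes I}$ is commutative, the first two families of relations (the $p$'s commute among themselves, and likewise the $p'$'s) hold in $H^+$ and $H^-$ respectively. As in Section~\ref{sec:q-Heis}, I would also observe that~\eqref{eq:lattice-Heis-bilinear-form} is an (untwisted) Hopf pairing, by the same matching-of-colored-parts computation, using the primitivity of the coproduct~\eqref{eq:p-coproduct}; this is what makes $(H^+,H^-)$ an untwisted dual pair.

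The substance of the lemma is the cross relation. By the general principle invoked in Proposition~\ref{prop:q-Heis-p-presentation}, the two commutativity families together with the commutation relations between $p'_{m,i}$ and $p_{n,j}$ form a complete set of relations, so it suffices to compute the latter. Since the pairing is untwisted, all the $q$-exponents in the smash-product formula~\eqref{eq:smash-product} vanish, and the primitivity of $p'_{m,i}$ gives
\[
  p'_{m,i}\, p_{n,j} = p_{n,j}\, p'_{m,i} + p_{m,i}^*(p_{n,j}).
\]
It then remains to evaluate the adjoint action $p_{m,i}^*(p_{n,j})$, which lives in degree $n-m$. For any homogeneous $y$ of positive degree, $\langle y\,p_{m,i}, p_{n,j}\rangle = 0$ because $y\,p_{m,i}$ has colored length at least two while $p_{n,j}$ has length one, and~\eqref{eq:lattice-Heis-bilinear-form} vanishes on colored sequences of different lengths; hence $p_{m,i}^*(p_{n,j})$ is supported in degree zero. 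Pairing against $1$ and using $\langle p_{m,i},p_{n,j}\rangle = \delta_{mn}\,n\langle v_i,v_j\rangle_L$ then gives $p_{m,i}^*(p_{n,j}) = \delta_{mn}\,n\langle v_i,v_j\rangle_L$, which is exactly the third stated relation.

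The one point requiring genuine care—precisely as in Section~\ref{sec:q-Heis}—is the existence of $\fh(H^+,H^-)$, i.e.\ that~\eqref{eq:lattice-Heis-bilinear-form} is a perfect pairing; this is the analog of Proposition~\ref{prop:q-Heis-form-nondegen}, and it is also used in the adjoint-action step above (to conclude that a degree-$(n-m)$ element pairing trivially with all of $H^+_{n-m}$ must vanish). On the degree-$k$ power sums the Gram matrix is $k(\langle v_i,v_j\rangle_L)_{i,j}$, so nondegeneracy reduces, via the same induction on the grading as in Proposition~\ref{prop:q-Heis-form-nondegen}, to nonsingularity of the lattice Gram matrix $(\langle v_i,v_j\rangle_L)_{i,j}$. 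I expect this nondegeneracy—rather than the relation computation, which is routine—to be the main obstacle, and I would either carry out the grading induction verbatim or adopt nondegeneracy of the lattice form as a standing hypothesis.
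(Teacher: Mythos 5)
Your proposal is correct and is essentially the paper's own argument: the paper simply declares the proof ``analogous to that of Proposition~\ref{prop:q-Heis-p-presentation}'' and omits it, and what you have written is exactly that analogy carried out, with the scalar $[k\langle i,j\rangle]\tfrac{[k]}{k}$ specialized to $k\langle v_i,v_j\rangle_L$. Your observation that everything hinges on nondegeneracy of the form (the $q=1$ analogue of Proposition~\ref{prop:q-Heis-form-nondegen}, which here reduces to nonsingularity of the lattice Gram matrix) is also consonant with the paper, which only ever invokes this lemma for a summand of $L$ on which $\langle-,-\rangle_L$ is nondegenerate.
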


\begin{proof}
  The proof of this result is analogous to that of Proposition~\ref{prop:q-Heis-p-presentation} and thus will be omitted.
\end{proof}

\begin{prop}
  We have the following isomorphisms of $\Q$-algebras.
  \begin{enumerate}
    \item If the bilinear form $\langle -, - \rangle_L$ is nondegenerate, then $\fh^L \cong \fh_\textup{classical}$.
    \item If the bilinear form $\langle -, - \rangle_L$ is identically zero, then $\fh^L \cong \Sy$.
    \item If the bilinear form $\langle -, - \rangle_L$ is degenerate but not identically zero, then $\fh^L \cong \fh_\textup{classical} \otimes_\Q \Sy$.
  \end{enumerate}
\end{prop}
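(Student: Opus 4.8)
The plan is to show that, as an abstract $\Q$-algebra, $\fh^L$ depends on the form $\langle-,-\rangle_L$ only through its rank $d$, and to reduce every form to the standard shape $\operatorname{diag}(1^d,0^{\ell-d})$. Extend $\langle-,-\rangle_L$ to $L_\Q=L\otimes_\Z\Q$ and write $A=(\langle v_i,v_j\rangle_L)$. First I would record the invariance under congruence: for $C\in\mathrm{GL}_\ell(\Q)$ the elements $\hat p_{i,n}=\sum_j C_{ij}p_{j,n}$ generate $\fh^L$ and satisfy $[\hat p_{i,n},\hat p_{j,m}]=n\delta_{n,-m}(CAC^T)_{ij}$, so that the algebra for $A$ is isomorphic to the one for the congruent form $CAC^T$ (comparing the two presentations via the mutually inverse generator substitutions $p\mapsto C^{-1}\hat p$, $\hat p\mapsto Cp$). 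Since we work over $\Q$, the symmetric matrix $A$ is $\Q$-congruent to $\operatorname{diag}(d_1,\dots,d_d,0,\dots,0)$ with $d_i\in\Q^\times$ and $d=\operatorname{rank}A$. A second, equally formal, isomorphism rescaling the negative modes $p_{i,n}\mapsto d_i^{-1}p_{i,n}$ (for $n<0$, $1\le i\le d$) normalizes each nonzero entry to $1$. After these reductions $[p_{i,n},p_{j,m}]=\delta_{ij}\,n\,\delta_{n,-m}$ for $i,j\le d$, and all other pairs of generators commute.

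Next I would produce the tensor decomposition. As $\fh^L$ is the central quotient $U(\mathfrak{g})/(z-1)$ of the universal enveloping algebra of the Heisenberg Lie algebra $\mathfrak{g}=\bigoplus_{i,n}\Q p_{i,n}\oplus\Q z$ (with $z$ central and $[p_{i,n},p_{j,m}]=n\delta_{n,-m}\langle v_i,v_j\rangle_L\,z$), it has a PBW-type basis of ordered monomials in the $p_{i,n}$. Ordering the \emph{Heisenberg} colors $1,\dots,d$ before the \emph{commuting} colors $d+1,\dots,\ell$ factors these monomials and yields $\fh^L\cong B\otimes_\Q P$, where $B$ is generated by $\{p_{i,n}:i\le d\}$ and $P$ by $\{p_{i,n}:i>d\}$. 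Here $P$ is a polynomial ring on the countable set $\{p_{i,n}:d<i\le\ell,\ n\in\Z\setminus\{0\}\}$, so $P\cong\Sy$ when $\ell-d\ge1$ and $P=\Q$ when $d=\ell$.

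For the factor $B$ I would prove the sub-lemma that any Heisenberg algebra on a countably infinite family of dual mode-pairs with arbitrary nonzero rational pairing values is isomorphic to $\fh_\textup{classical}$. Writing $\fh_\textup{classical}$ itself in this form (the mode $a_n$ is paired with $a_{-n}$ with value $n$, and distinct pairs commute), one chooses a bijection of index sets and rescales one side of each pair to align the values; this is again an invertible generator substitution. Applying it with index set $\{(i,n):i\le d,\ n\in\N_+\}$, pairing $p_{i,n}$ with $p_{i,-n}$ of value $n$, gives $B\cong\fh_\textup{classical}$ when $d\ge1$, and $B=\Q$ when $d=0$. Reading off $\fh^L\cong B\otimes_\Q P$ in the three cases then gives: $\fh^L\cong\fh_\textup{classical}$ when $d=\ell$; $\fh^L\cong\Sy$ when $d=0$; and $\fh^L\cong\fh_\textup{classical}\otimes_\Q\Sy$ when $0<d<\ell$.

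The congruence, rescaling, and re-indexing steps are all invertible substitutions of generators and so are purely formal, requiring no extra input. The one genuinely non-formal ingredient—and the main point to pin down—is the PBW/normal-form basis of $\fh^L$, which is what legitimizes the factorization $\fh^L\cong B\otimes_\Q P$ and the identification of $B$ and $P$ with their own presentations (with no hidden collapsing relations). One could instead deduce this from faithfulness of a Fock-type representation as in Theorem~\ref{theo:Fock-space-properties}, although that theorem applies directly only to the nondegenerate factor $B$, so the cleanest route is to invoke PBW for the Heisenberg Lie algebra at the outset.
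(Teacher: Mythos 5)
Your proof is correct, but it reaches the result by a genuinely different route than the paper. The paper's proof is two lines because it leans on the machinery just developed: it takes $L'$ to be the radical of $\langle -,-\rangle_L$ and $L''$ a complement, observes that $\fh^L \cong \fh^{L'}\otimes_\Q \fh^{L''}$ follows formally from the presentation in Definition~\ref{def:lattice-Heis}, identifies $\fh^{L'}\cong\Sy$ as a polynomial algebra, and then handles the nondegenerate factor by citing Lemma~\ref{lem:lattice-Heis-double-p-gens}: since the form on $L''$ is nondegenerate, $\fh^{L''}$ is presented exactly as the Heisenberg double of a tensor power of $\Sy$, and a Heisenberg double depends up to isomorphism only on the Hopf algebra $H^+\cong\Sy$, whose double is $\fh_\textup{classical}$. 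You never touch the Heisenberg double: you diagonalize the form by a $\Q$-congruence, rescale negative modes to reduce to $\mathrm{diag}(1^d,0^{\ell-d})$, split off the commuting colors, and identify the nondegenerate factor with $\fh_\textup{classical}$ by an explicit re-indexing and rescaling of mode pairs. Both routes are sound, and they rest on the same kind of normal-form input: your PBW basis for $U(\mathfrak{g})/(z-1)$ plays the role that the $\kk$-module identification $\fh = H^+ \otimes H^-$ (ultimately backed by faithfulness of Fock space, Theorem~\ref{theo:Fock-space-properties}\eqref{theo-item:Fock-space-faithful}) plays behind Proposition~\ref{prop:q-Heis-p-presentation} and Lemma~\ref{lem:lattice-Heis-double-p-gens}. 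One small economy you could make: the tensor factorization itself needs no PBW, since a splitting of the generators into two families whose only cross-relations are commutation gives $\fh^L \cong B \otimes_\Q P$ for the \emph{presented} algebras $B$ and $P$ by universal properties alone --- this is why the paper can call its decomposition immediate; PBW is needed only because you realize $B$ and $P$ as subalgebras of $\fh^L$. As for what each approach buys: yours is elementary and self-contained, makes every isomorphism explicit, and shows along the way that $\fh^L$ depends only on the rank of $\langle-,-\rangle_L$; the paper's is essentially a corollary of its realization of lattice Heisenberg algebras as Heisenberg doubles, which is the point of the section, but it presupposes that machinery.
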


\begin{proof}
  Let $L'$ be the radical of the form $\langle -, - \rangle_L$ and choose a complement $L''$. Thus, we have $L = L' \oplus L''$, with $\langle L', L'' \rangle = 0$.  It follows immediately from Definition~\ref{def:lattice-Heis} that $\fh^L \cong \fh^{L'} \otimes_\Q \fh^{L''}$.  Since the form $\langle -, - \rangle_L$ is trivial when restricted to $L'$, we have $\fh^{L'} \cong \Sy$ as $\Q$-algebras.  On the other hand, the form is nondegenerate when restricted to $L''$.  Thus, by Lemma~\ref{lem:lattice-Heis-double-p-gens}, we have $\fh^{L''} \cong \fh_\textup{classical}$ as $\Q$-algebras.
\end{proof}

\begin{rem}
  By considering the complete and/or elementary symmetric functions in $H^\pm$, one can compute other presentations of $\fh(H^+,H^-)$.  Since the complete and elementary symmetric functions generate the ring of symmetric functions over $\Z$, one obtains in this way integral forms of the lattice Heisenberg algebra (see Remark~\ref{rem:q-Heis-integral-form} and \cite[\S6.2]{SY13}).
\end{rem}

\begin{rem}
  Heisenberg algebras appear in many guises in the literature.  We expect that most such algebras (for example, the (quantum) Heisenberg algebras discussed in \cite{FJW00a,FJW00b,FJW02}) can be described in terms of the Heisenberg double.  In each case, one should be able to recover the presentation in question by choosing an appropriate bilinear form on $\Sy^{\otimes \ell}$, for some $\ell \in \N_+$, and generators for $H^{\pm} = \Sy^{\otimes \ell}$.
\end{rem}


\bibliographystyle{alpha}
\bibliography{RossoSavage-biblist}

\end{document}